\def\spr{\color{red}}
\def\spc{\color{blue}}
\def\nc{\normalcolor}
\numberwithin{equation}{section}
\newtheorem{theorem}{Theorem}[section]
\newtheorem{lemma}[theorem]{Lemma}
\newtheorem{claim}[theorem]{Claim}
\newtheorem{criterion}[theorem]{Recurrence criterion}
\theoremstyle{definition}
\newtheorem{dfn}[theorem]{Definition}
\newtheorem{rem}[theorem]{Remark}
\newtheorem{example}[theorem]{Example}
\newcommand{\bx}{{\bf x}}
\newcommand{\R}{{\mathbb{R}}}
\renewcommand{\P}{\mathbb{P}}
\newcommand{\Z}{\mathbb{Z}}
\newcommand\N{\Z_+}
\newcommand{\E}{\mathbb{E}}
\newcommand{\gen}{\mathsf{L}}
\def\bn#1\en{\begin{align*}#1\end{align*}}
\def\bnn#1\enn{\begin{align}#1\end{align}}
\def\|{\, |\, }
\newcommand{\rmd}{\,\mathrm{d}}
\theoremstyle{definition}
\newenvironment{romenumerate}[1][-0pt]{% optional argument changes indentation
\addtolength{\leftmargini}{#1}\begin{enumerate}% gives (i), (ii) etc.
 }{\end{enumerate}}
\newcommand\ga{\alpha}
\newcommand\gb{\beta}
\newcommand\gd{\delta}
\newcommand\gG{\Gamma}
\newcommand\kk{\kappa}
\newcommand\gl{\lambda}
\newcommand\go{\omega}
\newcommand\gO{\Omega}
\newcommand\bigpar[1]{\bigl(#1\bigr)}
\newcommand\Bigpar[1]{\Bigl(#1\Bigr)}
\newcommand\bigset[1]{\ensuremath{\bigl\{#1\bigr\}}}
\newcommand\set[1]{\ensuremath{\{#1\}}}
\newcommand\innprod[1]{\langle#1\rangle}
\newcommand{\refT}[1]{Theorem~\ref{#1}}
\newcommand{\refL}[1]{Lemma~\ref{#1}}
\newcommand{\refLs}[1]{Lemmas~\ref{#1}}
\newcommand{\refR}[1]{Remark~\ref{#1}}
\newcommand{\refS}[1]{Section~\ref{#1}}
\newcommand{\refSS}[1]{Section~\ref{#1}}
\newcommand{\refE}[1]{Example~\ref{#1}}
\newcommand{\refF}[1]{Figure~\ref{#1}}
\newcommand{\refrc}[1]{recurrence criterion~\ref{#1}}
\newcommand\qw{^{-1}}
\newcommand\qww{^{-2}}
\newcommand\bfe{{\bf e}}
\newcommand\bfv{{\bf v}}
\newcommand\bfu{{\bf u}}
\newcommand\bfo{{\mathbf 0}}
\newcommand\GO{\Gamma_0}
\newcommand\Reff{R_\infty}
\newcommand\Gabg{\Gamma_{\alpha, \beta, G}}
\newcommand\Gaog{\Gamma_{\alpha, 0, G}}
\newcommand\ZZn{\Z_+^n}
\newcommand\Zabg{Z_{\alpha, \beta, G}}
\newcommand\Zaog{Z_{\alpha, 0, G}}
\newcommand\hZabg{\hZ_{\alpha, \beta, G}}
\newcommand{\sumk}{\sum_{k=1}^\infty}
\newcommand{\sumL}{\sum_{L=1}^\infty}
\newcounter{case}
\newcommand\pfcasex[1]{\refstepcounter{case}\smallskip\noindent
  \emph{Case \arabic{case}: #1.}}
\newcommand\sumij{\sum_{i,j:\;i\sim j}}
\newcommand\sumin{\sum_{i=1}^n}
\newcommand\sumkn{\sum_{k=1}^n}
\newcommand\sfC{\mathsf{C}}
\newcommand\sfK{\mathsf{K}}
\newcommand\norm[1]{\lVert#1\rVert}
\newcommand\GGxx{\gG''}
\newcommand\hmu{\widehat\mu}
\newcommand\hZ{\widehat Z}
\newcommand\xxi{\bx}
\newcommand\grad{\nabla}
\newcommand\ax{a'}
\newcommand\indic{\boldsymbol{1}}
\newcommand\QZ{Q_\zeta}
\newcommand\DQZ{\Delta\QZ}
\newcounter{CC}
\newcommand{\CC}{\stepcounter{CC}\CCx} %new constant C_i
\newcommand{\CCx}{C_{\arabic{CC}}}     %repeats the last C_i
\newcommand{\CCdef}[1]{\xdef#1{\CCx}}     %defines #1 as the last C_i
\newcommand{\CCname}[1]{\CC\CCdef{#1}}    %new C_i and defines #1 as it
\newcommand\cE{\mathcal E}
\newcommand\abs[1]{|#1|}
\newcommand\tq{\widetilde q}
\newcommand\txi{\widetilde\xi}
\newcommand\tzeta{\widetilde\zeta}
\newcommand\tC{\widetilde C}
\newcommand\tGabg{\widetilde\Gamma_{\alpha, \beta, G}}
\newcommand\tZabg{\widetilde Z_{\alpha, \beta, G}}
\newcommand\exx[1]{(\emph{#1})}
\begin{document}
\title{Long term behaviour of a reversible system of  interacting random walks}
\author{
Svante Janson\footnote{Uppsala University. Email: svante.janson@math.uu.se},\quad
Vadim Shcherbakov\footnote{Royal Holloway, University of London. Email: vadim.shcherbakov@rhul.ac.uk}\quad 
and  Stanislav Volkov\footnote{Lund University.  Email: s.volkov@maths.lth.se}}
\maketitle

\begin{abstract}
This paper concerns the  long-term behaviour  of a system of interacting  random walks labeled by vertices of a finite graph.
The model  is reversible which allows to use the method of electric networks in the study. In addition, examples of alternative proofs not requiring  reversibility are provided. 
\end{abstract}

\noindent {{\bf Keywords:} Markov chain, random walk, transience, recurrence, Lyapunov function, martingale, 
renewal measure, return time.
}

\noindent {{\bf Subject classification:} 60K35, 60G50}

\section{Introduction}
\label{intro}

Let  $G$ be  a finite non-oriented  graph with   $n\ge1$ 
vertices labelled by $1,2,\dots, n$.
Somewhat abusing notation, we will use $G$ also for the set of the vertices of this graph. 
Let $A=(a_{ij})$ be the adjacency matrix of the graph, that is  $a_{ij}=a_{ji}=1$ or
 $a_{ij}=0$ according to whether vertices $i$  and $j$ are adjacent
 (connected by an edge)  or not.
If  vertices $i, j\in G$ are connected by an edge, i.e.\ $a_{ij}=1$, call them {\it neighbours} and write $i\sim j$. 
By definition, a vertex is not a
 neighbour of itself, i.e $a_{ii}=0$ for all $i=1,\dots,n$ (i.e.\ there are no self-loops).

Let $\Z_{+}$ be the set of all non-negative integers including zero. 
Consider a continuous-time Markov chain, CTMC for short, 
$\xi(t)=(\xi_1(t),\ldots, \xi_n(t))\in \Z_{+}^{n}$ evolving as follows. 
Given $\xi(t)=\xi=(\xi_1,\dots, \xi_n)\in \Z_{+}^{n}$,
 a component $\xi_i$ increases by~$1$ at the rate 
 \begin{equation}
   \label{rate}
e^{\alpha\xi_i+\beta(A\xi)_i}=e^{\alpha\xi_i+\beta\sum_{j: j\sim i}\xi_j},
 \end{equation}
%$e^{-b\phi_i(\xi)}$, where~$b>0$ and 
%\begin{equation}\label{phi}
%\phi_i(\xi)=\sum\limits_{j\sim i}\xi_j.
%\end{equation}
 where $\alpha, \beta\in \R$ are two given constants.
A positive component~$\xi_i$ decreases by~$1$ at constant rate~$1$. 

In other words,  denoting the rates of the CTMC $\xi(t)$ by $q_{\xi,\eta}$, for $\xi,\eta\in\ZZn$, we have 
\begin{equation}
  \label{q}
q_{\xi,\eta}=
\begin{cases}
  e^{\alpha\xi_i+\beta(A\xi)_i}=e^{\alpha\xi_i+\beta\sum_{j: j\sim i}\xi_j}, 
& \eta=\xi+\bfe_i, \\
1, & \eta=\xi-\bfe_i, \\
0, & \norm{\eta-\xi}\neq1,
\end{cases}
\end{equation}
where ${\bf e}_i\in \Z_{+}^n$ is
the $i$-th unit vector, and
$\lVert\cdot\rVert$ denotes the usual Euclidean norm.

%$q_{\xi,\xi+\bfe_i}$ is given by \eqref{rate},
%$q_{\xi,\xi-\bfe_i}=1$ when $\xi_i>0$, 
%and $q_{\xi,\eta}=0$ in all other cases, i.e., when $\norm{\xi-\eta}\neq1$.

It is easy to see that if~$\beta=0$,  then CTMC $\xi(t)$  is  a collection
of $n$   independent reflected 
continuous-time
random walks  on $\Z_{+}$
(symmetric if also $\alpha=0$).
In general, 
the Markov chain can be regarded as an inhomogeneous random walk on infinite graph $\Z_{+}^G$. 
Alternatively, it can  be   interpreted   as a system  of $n$ random 
walks on $\Z_{+}$  labelled by the vertices of graph $G$ and 
evolving   subject to a nearest neighbour interaction. 

The purpose of the present paper is to study how the long term behaviour of
CTMC $\xi(t)$ depends on the parameters $\alpha$ and $\beta$ together with
 properties of the graph $G$. 
In our main result (\refT{Tmain}), we give a complete classification saying 
whether 
the Markov chain is recurrent or transient, and
in the recurrent case  whether it is positive recurrent or
null recurrent.
We find phase transitions, with different behaviour in various regions 
depending of parameters $\alpha$, $\beta$ and properties of  graph $G$.
Furthermore,
we give results  (\refT{Texpl})
on whether the Markov chain is explosive or not.
(This is relevant for the transient case only, since
a recurrent CTMC always is non-explosive.)
These results are  less complete and leave one case open.

It is obvious that CTMC $\xi(t)$ is irreducible; hence the initial
distribution is irrelevant for our results. (We may if we like  assume that we
start at $\bfo=(0,\dots,0)\in\Z_+^n$.)

CTMC $\xi(t)$ was introduced in~\cite{Volk3}, where
its long term behaviour  was studied  in several cases.
In particular, conditions for positive or null recurrence and transience  
were obtained in some special cases; these results are extended in the
present paper.
In addition, 
the typical asymptotic behaviour of the Markov chain was studied in some
transient cases. 

One  example of our results is the case $\alpha<0$ and
$\beta>0$, which  is of a particular 
interest because of the following phenomenon observed in \cite{Volk3} in
some special cases.
If $\alpha<0$ and $\beta=0$, then, as said above, CTMC $\xi(t)$  is formed
by a collection of independent 
positive recurrent reflected random walks on $\Z_{+}$, and is thus positive recurrent.
If both  $\alpha<0$ and $\beta<0$, 
then the Markov chain is still positive recurrent (as shown below). The interaction in this case is, in a sense, competitive, as 
neighbours obstruct the growth of each other. 
Now keep $\ga<0$ fixed but let $\gb>0$.
If $\beta$ is positive, but  not large,  then one could intuitively expect that the Markov chain is still positive recurrent 
(``stable''), as the interaction (cooperative in this case) is not strong enough.
On the other hand, if $\beta>0$ is sufficiently large,  then  the intuition suggests that 
the Markov chain becomes transient (``unstable").
It turns out that this is correct and that the  phase transition in the
model behaviour occurs at  the  critical value 
$\beta=\frac{|\alpha|}{\lambda_1(G)}$, where $\lambda_1(G)$ is the largest eigenvalue of graph $G$.
Namely, if $\beta<\frac{|\alpha|}{\lambda_1(G)}$ then the Markov chain is positive recurrent, and if 
$\beta\geq \frac{|\alpha|}{\lambda_1(G)}$ then the Markov chain is transient.
Moreover, it turns out that exactly at the critical regime, 
i.e.,  $\beta=\frac{|\alpha|}{\lambda_1(G)}$, the Markov chain 
is non-explosive transient.
 We conjecture that if $\beta>\frac{|\alpha|}{\lambda_1(G)}$,
then it is explosive transient. This remains as an open problem in the
general case (see Remark~\ref{Rexpl} below).
Another important contribution of this paper to the previous study of the Markov chain 
is a recurrence/transience classification in the case~$\alpha=0$ and~$\beta<0$.
This case was  discussed in~\cite{Volk3} only for the simplest graph with two vertices.
We show that in general there are only two possible long term behaviours of the Markov chain if~$\alpha=0$ and~$\beta<0$.
Namely, CTMC $\xi(t)$ is either non-explosive transient or null recurrent, and 
this  depends only on the independence number of the graph~$G$. 

We also consider some variations of the Markov chain defined above.
First, we include in our results
the Markov chain above with dynamics 
 obtained by setting $\beta=-\infty$ (with convention $0\cdot \infty=0$).
In other words, 
%in the case of hard-core interaction (the  hard-core case)  
a component cannot jump up (only down, when possible),
if at least one of its neighbours is non-zero;
this can thus be interpreted as \emph{hard-core} interaction. 
 See \refSS{SShardcore} for more details on this hard-core case.

In \refS{S:DTMC} we consider the discrete time Markov chain (DTMC)  
$\zeta(t)\in \Z_{+}^n$ that corresponds to 
CTMC $\xi(t)$, i.e.\ the corresponding embedded DTMC. 
We show that our main results also apply to this DTMC.

Finally, in \refS{Smodified}, we study the CTMC with the rates given by
\begin{equation}
  \label{tq}
\tq_{\xi,\eta}=
\begin{cases}
  e^{\alpha\xi_i}, 
& \eta=\xi+\bfe_i, \\
 % e^{-\beta(A\xi)_i}=
e^{-\beta\sum_{j: j\sim i}\xi_j},  & \eta=\xi-\bfe_i, \\
0, & \norm{\eta-\xi}\neq1.
\end{cases}
\end{equation}
We show that similar results holds for this chain, although there is a minor
difference.

We use essentially the method of electric networks in our proofs; this is
possible since the
 CTMC $\xi(t)$ is reversible (see \refSS{SSrev}). 
The use of reversibility was rather limited  in~\cite{Volk3},
where  the Lyapunov function method and  direct probabilistic arguments were the  main research techniques.
In addition, we provide  examples of alternative proofs of some of our
results based on the Lyapunov function method  
and renewal theory for random walks. The advantage of these  alternative
methods
is that they do not require reversibility and can be applied in more general situations.
Therefore, the alternative proofs are of interest on their own right.

\begin{rem}\label{Rsimple}
  In the case $\ga=\gb=0$, all rates \eqref{rate} equal 1, and the Markov
  chain is  a continuous-time simple random walk on $\ZZn$.
It is known that a simple random walk on the octant $\Z_+^n$ is 
null recurrent for $n\le2$ and transient for $n\ge3$;
this is a variant of the corresponding well-known result for simple random
walk on $\Z^n$, and can rather easily be shown using electric network theory,
see \refE{Esimple} below.
\end{rem}

\begin{rem}\label{Rdisconnected}
  We allow the graph $G$ to be disconnected.
However, there is no
interaction between different
components of $G$, and the CTMC $\xi(t)$ consists of independent Markov
chains defined by the connected components of $G$. Hence, the case of main
interest is when $G$ is connected.
\end{rem}

\begin{rem}\label{Rempty}
The case when $G$ has no edges is somewhat exceptional but also rather trivial, 
since then the value of $\beta$ is irrelevant, and $\xi(t)$ consists of $n$
independent continuous-time  random walks on $\Z_+$; 
in fact, $\xi(t)$ then is as in
the case $\gb=0$ for any other $G$ with $n$ vertices.
In particular, if $G$ has no edges, we may assume $\gb=0$.
\end{rem}

\begin{rem}
  CTMC $\xi(t)$ is a model of interacting spins and, as such,  is related to models of statistical physics.
The stationary distribution of a finite Markov chain with 
 bounded components  and the same transition rates 
is of interest in statistical physics. In particular, 
if  components take only values $0$ and $1$, then the stationary distribution 
of the corresponding Markov chain is equivalent to a special case of  the
famous Ising model.
One of the main problems in statistical physics is to determine 
whether such probability distribution is subject to phase transition 
as the underlying graph indefinitely expands.
%Phase transition means that a system behaviour changes qualitatively at a
%certain value of a  parameter. 
In the present paper, we keep the finite graph $G$ fixed, but allow arbitrarily
large components $\xi_i$. We then study phase transitions of this model, in
the sense discussed above.
\end{rem}

%\begin{ack}
%  We thank James Norris for helpful comments on Markov chains.
%\end{ack}

\section{The main results}\label{Smain}

In order to state our results, we need two definitions from graph theory.
We also let $e(G)$ denote the number of edges in $G$. 

\begin{dfn}
\label{D0}
The \emph{eigenvalues} of a finite graph $G$ are the eigenvalues of its
adjacency matrix $A$. These are real, since $A$ is symmetric, and we denote
them by $\lambda_1(G)\geq \lambda_2(G)\geq\dots\geq  \lambda_n(G)$, so that
$\lambda_1:=\lambda_1(G)$ is the largest eigenvalue. 
\end{dfn}
Note that $\gl_1(G)>0$
except in the rather trivial case $e(G)=0$
(see \refR{Rempty}).

\begin{dfn}
\label{D1}
\begin{romenumerate}
\item 
An \emph{independent set} of vertices in a graph $G$ is a set of the
vertices such that  
no two vertices in the set are adjacent.
(I.e., no pair of vertices in the set are joined by an edge of $G$.)
\item 
The \emph{independence number} $\kappa= \kappa(G)$ of a graph $G$
is  the cardinality of the largest independent  set of vertices.
\end{romenumerate}
\end{dfn}
For example, if $G$ is a cyclic graph $\sfC_n$
with $n$ vertices, then $\kappa=\lfloor n/2 \rfloor$.

The main results of the paper are collected in the following theorem,
which generalises results concerning positive recurrence of the Markov chain
obtained in~\cite{Volk3}. 
\begin{theorem}\label{Tmain}
Let $-\infty<\ga<\infty$ and $-\infty\le\gb<\infty$, and 
consider the CTMC\/ $\xi(t)$.
  \begin{romenumerate}
  \item \label{Tmain+}
If\/ $\ga<0$ and $\ga+\gb\gl_1(G)<0$, then  $\xi(t)$ is positive
recurrent.

  \item \label{Tmain0}
$\xi(t)$ is null recurrent in the following cases:
    \begin{enumerate}
    \item \label{Tmain0a}
$\ga=0$, $\gb<0$ and $\kk(G)\le2$,
\item \label{Tmain0b}
$\ga=\gb=0$ and $n\le2$,
\item \label{Tmain0c}
$\ga=0$, $\gb>0$, $e(G)=0$ and $n\le2$.
    \end{enumerate}

\item \label{Tmaint}
In all other cases,    $\xi(t)$ is transient. This means the cases 
\begin{enumerate}
\item\label{Tmainta}  
$\ga>0$,
\item $\ga=0$, $\gb>0$ and $e(G)>0$,
\item $\ga=0$, $\gb>0$, $e(G)=0$ and $n\ge3$,
\item $\ga=\gb=0$ and $n\ge3$,
\item \label{Tmaintkk}
$\ga=0$, $\gb<0$ and $\kk(G)\ge3$,
\item \label{Tmaint-} 
$\ga<0$ and $\ga+\gb\gl_1(G)\ge0$.
\end{enumerate}
  \end{romenumerate}
\end{theorem}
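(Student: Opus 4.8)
The plan is to exploit reversibility and translate the recurrence/transience problem into an electric network question on the state space $\ZZn$, viewed as a weighted graph with conductances $c(\xi,\xi+\bfe_i)$ determined by the reversible measure. Recall that a reversible CTMC (equivalently, its embedded DTMC) is transient iff the associated network has finite effective resistance from a fixed vertex (say $\bfo$) to infinity, positive recurrent iff the total conductance $\sum_\xi \pi(\xi)$ converges (so a stationary probability measure exists), and null recurrent otherwise. So the whole theorem reduces to estimating, in each parameter regime, (a) the sum of the reversible weights, and (b) effective resistances via the standard two-sided bounds (Nash-Williams / cutset lower bounds for transience-excluding arguments, and explicit unit flows or the shorting/series-parallel method for the transient direction). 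I would first compute the reversible measure explicitly from the detailed-balance relations for the rates \eqref{q}: along an edge $\xi\to\xi+\bfe_i$ the ratio of weights is $e^{\alpha\xi_i+\beta(A\xi)_i}$, which telescopes (using that $A$ is symmetric) to a closed form like $\pi(\xi)\propto \prod_i \frac{z^{\xi_i}}{\xi_i!}$-type expressions times a Gaussian-exponential factor $\exp\bigl(\tfrac{\alpha}{2}\sum\xi_i^2+\tfrac{\beta}{2}\xi^\top A\xi - \tfrac{\alpha}{2}\sum\xi_i\bigr)$ or similar; getting this quadratic form right is the linchpin.

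Once the reversible measure $\pi(\xi)$ is in hand with its quadratic exponent $Q(\xi)=\tfrac12(\alpha\|\xi\|^2 + \beta\,\xi^\top A\xi)$ plus lower-order corrections, each case of the theorem becomes an analysis of this quadratic form together with the factorial denominators. For part \ref{Tmain+} ($\alpha<0$, $\alpha+\beta\lambda_1<0$): I claim $Q$ is negative definite-ish in the relevant sense. Diagonalising $A$, on the eigenvector for $\lambda_1$ the exponent behaves like $\tfrac12(\alpha+\beta\lambda_1)\|\cdot\|^2<0$, and on every other eigendirection $\alpha+\beta\lambda_j \le \alpha+\beta\lambda_1<0$ when $\beta\ge0$, while if $\beta<0$ we use $\alpha+\beta\lambda_j$ — here we must be careful since $\lambda_j$ can be negative, but then $\alpha<0$ still dominates because $\beta\lambda_j$ could be positive; the condition to check is really $\max_j(\alpha+\beta\lambda_j)<0$, which I'd verify follows from $\alpha<0$ and $\alpha+\beta\lambda_1<0$ by splitting on the sign of $\beta$ (for $\beta\le0$, the worst $j$ is the one with most negative $\lambda_j$, i.e. $\lambda_n$; but $|\lambda_n|\le\lambda_1$ for any graph, so $\alpha+\beta\lambda_n = \alpha - |\beta|\,|\lambda_n| \le \alpha < 0$; wait — if $\beta<0$ then $\beta\lambda_n = -|\beta|\lambda_n$ and $\lambda_n\le 0$ so this is $\ge0$, giving $\alpha+\beta\lambda_n$ could exceed $\alpha$; however $|\lambda_n|\le\lambda_1$ so $\beta\lambda_n \ge \beta\cdot(-\lambda_1)=|\beta|\lambda_1 \ge 0$, hmm, so one needs instead $\alpha + |\beta|\lambda_1$... but that's not obviously negative). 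This sign bookkeeping is exactly where I expect the main subtlety, and I would handle it by recalling that the hypothesis $\alpha+\beta\lambda_1<0$ with $\alpha<0$ is meant to cover both $\beta\ge0$ (genuine cooperative phase transition) and the claim from the introduction that $\beta<0$ always gives positive recurrence — in the latter subcase $\alpha+\beta\lambda_1<\alpha<0$ automatically and the quadratic form $\beta\xi^\top A\xi$ can be bounded using $\xi^\top A\xi \le \lambda_1\|\xi\|^2$ only when $\beta\ge0$, so for $\beta<0$ one instead bounds $\beta\xi^\top A\xi \le 0 \le -\alpha\|\xi\|^2$-free, giving $Q(\xi)\le \tfrac{\alpha}{2}\|\xi\|^2\to-\infty$ directly. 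In all cases $\pi$ has a summable Gaussian tail, so $\sum\pi<\infty$ and we get positive recurrence.

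For the transient cases \ref{Tmaint}, the strategy is to produce a finite-energy flow (or finite effective resistance) to infinity. The cleanest approach: when $\alpha>0$ (case \ref{Tmainta}), the reversible weights grow superexponentially (like $e^{c\|\xi\|^2}$) along rays, so even restricting the network to a single coordinate axis $\{k\bfe_1 : k\ge0\}$ — a series of resistors with resistances $r_k = 1/c(k\bfe_1,(k+1)\bfe_1)$ decaying doubly-exponentially — gives finite total resistance, hence transience by Rayleigh monotonicity (deleting all other edges only increases resistance, yet it stays finite). For case \ref{Tmaintkk} ($\alpha=0$, $\beta<0$, $\kappa\ge3$): take an independent set $S$ of size $3$; on the sub-lattice supported on $S$ the interaction term $\xi^\top A\xi$ vanishes (no edges inside $S$), so the chain restricted there looks like a simple random walk on $\Z_+^3$, which is transient (Remark~\ref{Rsimple}); again Rayleigh monotonicity (shorting the coordinates outside $S$, or just exhibiting the flow supported on that sublattice) gives the result. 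Case \ref{Tmaint-} ($\alpha<0$, $\alpha+\beta\lambda_1\ge0$, necessarily $\beta>0$): here along the direction of the Perron eigenvector $v_1>0$ the exponent $Q$ is $\ge0$, so weights along that "ray" do not decay, and one builds a flow spreading out in a cone around $\R_{\ge0}v_1$ inside $\ZZn$ whose energy is controlled by comparison with a simple random walk in $\ge3$ dimensions or with a biased walk — the positivity $v_1>0$ (Perron–Frobenius) ensures the ray actually lives in the octant. The remaining cases with $\alpha=0$, $\beta>0$ or $\alpha=\beta=0$ are covered by Remark~\ref{Rempty}/\ref{Rsimple} together with monotonicity in $\beta$ (larger $\beta>0$ only boosts upward rates, which should only help transience — to be made precise via a coupling or a weight comparison). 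I expect the genuinely hard case to be \ref{Tmaint-} at and above criticality — constructing an explicit finite-energy flow into the cone generated by $v_1$ while keeping all coordinates nonnegative and getting the quadratic-form bookkeeping to show the energy series converges — and this is presumably where the bulk of the paper's technical work (and the electric-network machinery of \refSS{SSrev}) will be spent; the null-recurrent boundary cases in part \ref{Tmain0} then follow from showing the network has infinite effective resistance (Nash–Williams with spherical cutsets, as in \refE{Esimple}) but infinite total weight.
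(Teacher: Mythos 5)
Your overall framework (reversibility, the explicit invariant measure $e^{W(\xi)}$ with $W$ as in \eqref{W}, electric networks, and summability of the invariant measure) is exactly the paper's, and several of your case arguments match the paper's lemmas precisely: the axis path for $\ga>0$, the independent-set sublattice giving SRW on $\Z_+^3$ for $\kk\ge3$, and the monotonicity-in-$\gb$ reduction for $\gb<0$ in the positive recurrent case. (One small correction: since the death rate is the constant $1$, detailed balance telescopes to exactly $e^{W(\xi)}$ with no factorial denominators; your $\prod_i z^{\xi_i}/\xi_i!$ factor does not belong here.) However, there are three genuine gaps. First, your stated criterion ``positive recurrent iff $\sum_\xi\pi(\xi)<\infty$'' is false for a CTMC: an explosive chain can have a summable invariant measure (see \refR{Rinvariant}). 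You must separately establish recurrence (hence non-explosiveness) — the paper does this via the Nash--Williams/shorting argument of \refL{LR1} — or work through the embedded DTMC and transfer back; summability of $e^{W}$ alone does not close case \ref{Tmain+}.

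Second, in case \ref{Tmaint-} ($\ga<0$, $\ga+\gb\gl_1\ge0$) you observe only that $Q\ge0$ along the Perron ray, conclude the weights ``do not decay'', and therefore propose spreading a flow over a cone and comparing with SRW in $\ge3$ dimensions. This misses the essential point and would fail outright for $n=2$ (e.g.\ $G=\sfK_2$, $\ga=-\gb$), where a cone of edges with merely bounded conductances is recurrent. The correct observation is that $W(\xi)=Q(\xi)-\frac{\ga}{2}S(\xi)$ contains the linear term $\frac{|\ga|}{2}S(\xi)$, so along a lattice path $y_k$ tracking $t\bfv_1$ one gets $W(y_k)\ge\frac{|\ga|}{2}k+O(1)$; the resistances $e^{-W(y_k)}$ then decay exponentially and a \emph{single path} already has finite effective resistance (\refL{LT2}). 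Third, for the null recurrent case \ref{Tmain0a} ($\ga=0$, $\gb<0$, $\kk\le2$) you invoke Nash--Williams with spherical cutsets but never explain how $\kk\le2$ enters. The naive count gives $O(L^{n-1})$ edges of conductance $\le1$ across the cut $V_L$, hence $\sum_L L^{-(n-1)}<\infty$ for $n\ge3$, which proves nothing. The needed input (the paper's \refL{LR2}) is that $\kk\le2$ forces two of the three largest coordinates to be adjacent, so a state with third-largest coordinate $u$ has conductance $\le e^{\gb u^2}$; summing over $u$ collapses the cut conductance to $O(L)$, recovering the two-dimensional rate $R_L\asymp L^{-1}$ and divergence of $\sum_L R_L$. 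Without this combinatorial step the recurrence half of the $\kk$ dichotomy is unproved.
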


\refT{Tmain} is summarized in the diagram in \refF{fig:1}

\begin{figure}[h]
  \centering
\setlength{\unitlength}{1cm}
\begin{picture}(14,16)
\put(8, 16){\line( 0, -1){2}}
\put(8, 16){\line( 3, -1){6}}
\put(8, 16){\line(-3, -1){6}}
\put( 4.3, 14.5){$\alpha<0$}
\put( 8.1, 14.5){$\alpha=0$}
\put(12.7, 14.5){$\alpha>0$}
\put(2, 14){\line(-2, -3){2}}
\put(2, 14){\line(+2, -3){2}}
\put(0.5,   11.5){$\alpha+\beta\lambda_1<0$}
\put( 3.7, 11.5){$\alpha+\beta\lambda_1\ge 0$}
\put(8, 14){\line(-2, -3){2}}
\put(8, 14){\line(+5, -2){7}}
\put(8, 14){\line(+1, -1){3}}
\put(6.5,   11.5){$\beta< 0$}
\put(14.5,   11.5){$\beta> 0$}
\put(10.6,  11.5){$\beta=0$}
\put(  6,  11){\line(-1, -1){1.5}}
\put(  6,  11){\line(+1, -1){1.5}}
\put(  5, 9.8){$\kappa\le 2$}
\put(7.3, 9.8){$\kappa\ge 3$}
\put(  11,  11){\line(-1, -1){1.5}}
\put(  11,  11){\line(+1, -1){1.5}}
\put(10.1, 9.8){$n\le 2$}
\put(12.3, 9.8){$n\ge 3$}
\put(  14, 13.5){\oval(2,1)}
\put(13.5, 13.5){Trans.}
\put(   0, 10.5){\oval(2,1)}
\put(-0.7, 10.5){Pos.Rec.}
\put(4,   10.5){\oval(2,1)}
\put(3.3, 10.5){Trans.}
\put(14.8, 10.7){\oval(2,1)}
\put(14.3, 10.7){Trans.} %%
\put(4.5, 9){\oval(2,1)}\put(3.7, 9){Null Rec.}
\put(7.5, 9){\oval(2,1)}\put(6.8, 9){Trans.}
\put( 9.7, 9){\oval(2,1)}\put( 8.9, 9){Null Rec.}
\put(12.7, 9){\oval(2,1)}\put(11.9, 9){Trans.}
\end{picture}
\vskip-8cm
\caption{The different phases for the CMTC $\xi(t)$. (Ignoring a trivial exception if $e(G)=0$, $\ga=0$ and $\gb>0$,  see~\refR{Rempty}.)}
\label{fig:1}
\end{figure}
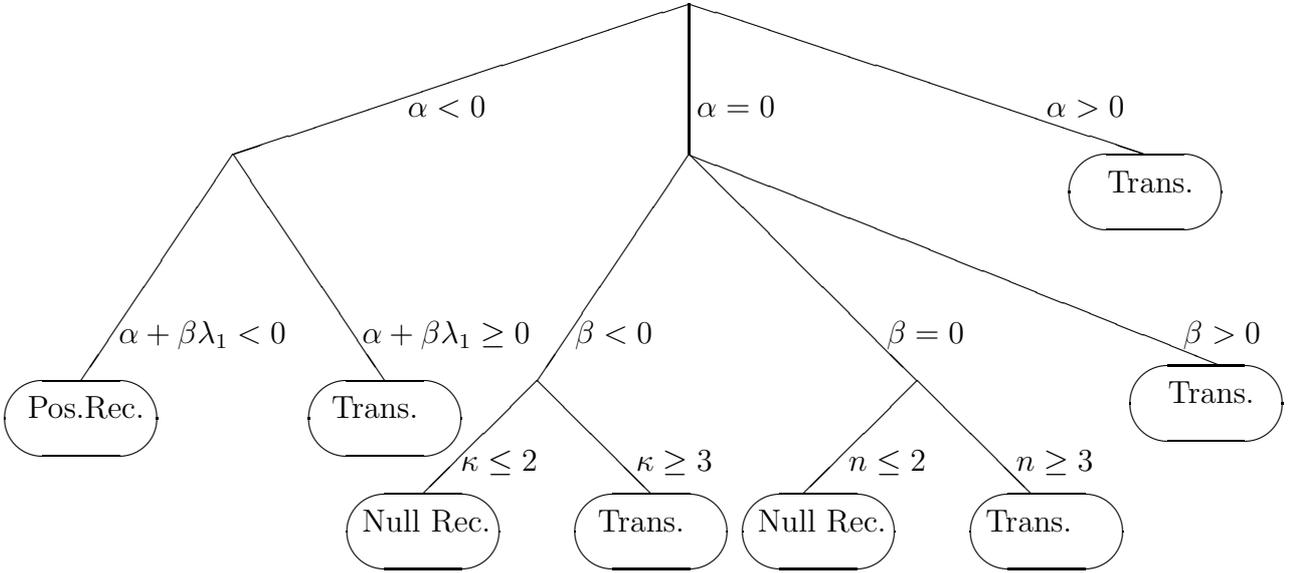

\begin{rem}
  \label{Rmono}
\refT{Tmain} shows that the behaviour of the Markov chain has the
following monotonicity property: 
if the Markov chain is transient for some given parameters 
$(\alpha_0, \beta_0)$, then it is also transient for all parameters
$(\alpha, \beta)$ 
such that $\alpha\geq \alpha_0$ and $\beta\geq \beta_0$. 
This can also easily be seen directly using electric networks as in
\refSS{SSelectric}, see the proof of \refL{LR1b}.
\end{rem}

\begin{rem}
\label{RCor1}
 There is a vast literature devoted to a graph eigenvalues. In particular,
 there are well known bounds for  the largest eigenvalue $\gl_1$.
We give two simple examples where 
the largest eigenvalue $\gl_1$ easily can be computed
explicitly,
which allows us to rewrite the
conditions of Theorem~\ref{Tmain} in the case $\ga<0$ in more explicit form.
These examples  basically rephrase  results previously obtained in
\cite[Theorems 4 and 6]{Volk3}. %(see Theorems 4 and 6 in that paper). 
\end{rem}

\begin{example}
 Assume that $G$ is a regular graph, i.e., a graph with constant vertex
 degrees $\nu$, say.
Then $\gl_1=\nu$.
Hence, 
the Markov chain is positive recurrent if and
 only if $\ga<0$ and $\alpha+\beta\nu<0$. 
If $\alpha<0$ and 
$\alpha+\beta\nu\geq 0$, then
the Markov chain is transient.
\end{example}

\begin{example}\label{EK1m}
 Assume that the graph  $G$ is  a star $\sfK_{1,m}$ with $m=n-1$ non-central
 vertices, where $m\geq 1$. 
A direct computation gives that $\lambda_1=\sqrt{m}$.
Hence, 
the Markov chain is positive recurrent if and
 only if $\ga<0$ and $\alpha+\beta\sqrt{m}<0$. 
If $\alpha<0$ and 
$\alpha+\beta\sqrt{m}\geq 0$, then
the Markov chain is transient.
\end{example}

We consider also two examples with $\ga=0$ and $\gb<0$, when the
independence number $\kk(G)$ is crucial.

\begin{example}\label{EK1m0}
 Let, as in \refE{EK1m},  $G$ be  a star $\sfK_{1,m}$, where $m\geq 1$. 
Then $\kk(G)=m=n-1$.
Assume that $\ga=0$ and $\gb<0$.
Then,
the Markov chain is null recurrent if
$n\le3$,
and transient if $n\ge4$.
\end{example}

\begin{example}\label{EC0}
 Let  $G$ be  a cycle $\sfC_{n}$, where $n\geq 3$. 
Then $\kk(G)=\lfloor n/2\rfloor$.
Assume that $\ga=0$ and $\gb<0$.
Then,
the Markov chain is null recurrent if
$n\le5$,
and transient if $n\ge6$.
\end{example}

%\section{Reversibility of the Markov chain and the corresponding electric
%network} \label{rev}
\section{Preliminaries}\label{S:prel}

\subsection{Reversibility of the Markov chain} \label{SSrev}
Define the following function 
\begin{equation}
\label{W}
W(\xi):=
\frac{\alpha}{2}\sum\limits_{i=1}^n\xi_i(\xi_i-1)+\beta\sumij\xi_i\xi_j
=
 \frac{1}{2}\innprod{(\alpha E+\beta A)\xi, \xi}-\frac{\alpha}{2}S(\xi),
\quad \xi\in\Z_{+}^n,
\end{equation}
where 
the second sum is interpreted as the sum over unordered pairs $\set{i,j}$
(i.e, a sum over the edges in $G$),
$\innprod{\cdot, \cdot}$ is the Euclidean scalar product, $E$ is the unit $n\times n$ matrix, $A$ is the
 adjacency matrix of the graph $G$  and 
\begin{equation}
\label{S}
S(\xi):=\sum\limits_{i=1}^n\xi_i.
\end{equation}
A direct computation gives  the  detailed balance equation
\begin{equation}
\label{balance}
e^{\alpha\xi_i+\beta(A\xi)_i}e^{W(\xi)}=e^{W\left(\xi+{\bf e}_i\right)},
\end{equation}
for $i=1,\dots, n$ and $\xi\in \Z_{+}^{n}$.
Note that, recalling  \eqref{q},
\eqref{balance} is equivalent to the standard form of the balance
equation
\begin{equation}
  \label{balanceq}
q_{\xi,\eta} e^{W(\xi)} = q_{\eta,\xi} e^{W(\eta)}, 
\qquad \xi,\eta\in\ZZn.
\end{equation}
Hence, (\ref{balance}) 
 means that 
the Markov chain is reversible with invariant measure $\mu(\xi):=e^{W(\xi)}$,
$\xi\in\Z_{+}^n$.

The explicit formula for the invariant measure $\mu$ enables us to easily
see when $\mu$ is summable,
and thus can be normalised to
an invariant distribution (i.e., a probability measure); we return to this in
\refL{LI}.

\begin{rem}
  Recall that a recurrent CTMC has an invariant measure that is
unique up to a multiplicative constant, while a transient CTMC in general
may have
several linearly independent invariant measures (or none). 
We do not investigate whether the invariant measure
$\mu$ is unique (up to constant factors) for our Markov
chain also in transient cases.
\end{rem}

\subsection{Electric network corresponding to the Markov chain}
\label{SSelectric}

Let us define the electric network on graph $\Z_{+}^n$ corresponding to the Markov chain of interest.
According to the general method (e.g., see~\cite{DS} or~\cite{Kelly}) the construction goes as follows. 
First, suppose that $\beta>-\infty$. Given $\xi=(\xi_1,\dots, \xi_n)\in \Z_{+}^n$ replace each edge 
$$
\{\xi-{\bf e}_i, \xi  \}=\left\{(\xi_1,\xi_2,\dots, \xi_{i}-1,\dots, \xi_n), \,
(\xi_1, \xi_2,\dots, \xi_i,\dots, \xi_n)\right\}, \quad i=1,\dots,n,
$$
(assuming $\xi_i\ge 1$) by a resistor with conductance (resistance${}^{-1}$)
equal to
\begin{equation}
\label{eqres}
C_{\xi-{\bf e}_i, \xi}:=e^{W(\xi)}.
\end{equation}
 Note that  $C_{\xi-{\bf e}_i, \xi}$  does not depend on $i$ in our case. 
Also, $C_{\bfo,{\bfe}_i}=e^{W({\bf e}_i)}=1$, i.e., the edges connecting the
origin  $\bfo$
with $\bfe_i$ have conductance 1, and thus resistance
%$$(0,0,\dots,0,\underbrace{1}_{i^{th}\ place},0,\dots,0)\in\Gamma$$
 $1$ (Ohm, say).

We  denote the network consisting of $\ZZn$ with the conductances
\eqref{eqres} by $\Gabg$.
Otherwise, we will for convenience
sometimes denote an electric network by the same symbol as the underlying graph
when it is clear from the context what the conductances are.

Let  $N(\Gamma)$ be an electric network on an infinite graph $\Gamma$.
The  effective resistance $\Reff(\Gamma)=\Reff(N(\Gamma))$ 
of the network is defined, loosely speaking,  as the resistance between some
fixed point of $\Gamma$, which in our case we choose as $\bfo$,  and infinity 
(see e.g.~\cite{DS}, \cite{Kelly} or \cite{Lyons-Peres} for more details).
Recall  that  a reversible Markov chain is transient if and only if the effective resistance of the corresponding 
electric network is finite. 
Equivalently, a reversible Markov chain is recurrent if and only if the effective resistance of the corresponding 
electric network is infinite. 

A common approach to showing either recurrence or transience of a reversible
Markov chain is based on 
Rayleigh's monotonicity law.
In particular, if $N(\Gamma')$ is a subnetwork of $N(\Gamma)$, obtained by
deleting some edges, then $\Reff(\Gamma)\leq \Reff(\Gamma')$.
 Therefore, if 
$\Reff(\Gamma')<\infty$, then
$\Reff(\Gamma)<\infty$ as well, and thus the corresponding Markov chain
on $\Gamma$ is transient.
Similarly, if the network $N(\Gamma'')$ is obtained from $N(\Gamma)$ by
short-circuiting one or several sets of vertices, then  
$\Reff(\Gamma'')\leq \Reff(\Gamma)$.
Hence, if
$\Reff(\Gamma'')=\infty$, then
$\Reff(\Gamma)=\infty$ as well, and  the corresponding Markov chain
on $\Gamma$ is recurrent.

\begin{example}\label{Esimple}
We illustrate these methods, and  give a flavour of later proofs, 
by showing how they work for a simple random walk (SRW) on $\ZZn$, which as
said in \refR{Rsimple} is the special case $\ga=\gb=0$ of our model.
The corresponding electric network  has all resistances equal to $1$. 

First, we obtain a lower bound of $\Reff(\ZZn)$ by 
some short-circuiting.
(See \cite[page 76]{DS}, or  the Nash-Williams criterion and Remark 2.10
in \cite[pages 37--38]{Lyons-Peres}.)
Let, recalling \eqref{S},
\begin{equation}
  \label{VL}
V_L:=\set{x\in\Z_+^n:S(x)=L}, \qquad L=0,1,\dots,
\end{equation}
and let $\GGxx$ be the network obtained from $\ZZn$
by short-circuiting each set $V_L$
of vertices; we can regard each $V_L$ as a vertex in $\GGxx$.
Then  we have $\asymp L^{n-1}$ resistors {\em in parallel} connecting
$V_{L-1}$ and  $V_L$. 
As a result, their conductances (i.e.\  inverse of resistance) sum up; hence
the effective resistance $R_L$
between $V_{L-1}$ and $V_L$ is $\asymp \frac 1{L^{n-1}}$. Now $\GGxx$
consists of a sequence of resistors $R_L$ {\em in series}, so 
we must sum them; consequently the resistance of the modified network is 
\begin{equation}
\Reff(\GGxx)=\sumL R_L\asymp \sum_{L=1}^{\infty} \frac 1{L^{n-1}}.  
\end{equation}
If $n=1$ or $n=2$, this sum is infinite and thus
$\Reff(\ZZn)\ge\Reff(\GGxx)=\infty$; hence the SRW is recurrent.

On the other hand, if $n\ge3$, 
one can show that the random walk is transient.
See, for example, the description of the tree $NT_{2.5849}$ 
in \cite[Section 2.2.9]{DS},
or the construction of a flow with finite energy
in \cite[page 41]{Lyons-Peres} 
(there done for $\Z^n$, but works for $\ZZn$ too),
for a direct proof that $\Reff(\ZZn)<\infty$.
An alternative argument uses the well-known 
transience of SRW on $\Z^n$ ($n\ge3$) as follows.
Consider a unit current flow from $\bfo$ to infinity on $\Z^n$. By symmetry,
for every vertex $(x_1,\dots,x_n)\in\Z^n$, the potential is the same at all
points $(\pm x_1,\dots,\pm x_n)$. Hence we may short-circuit each such set
without changing the effective resistance $\Reff$. The short-circuited
network, $\gG'$ say, is thus also transient. However, $\gG'$ can be
regarded as a network on $\ZZn$ where each edge has a conductance between 2
and $2^n$ (depending only on the  number of non-zero coordinates). Hence, 
by Rayleigh's monotonicity law, $\Reff(\ZZn)\le 2^n\Reff(\gG')<\infty$,
and thus the SRW is transient.
\end{example}

\subsection{The hard-core interaction}\label{SShardcore}

Let us discuss in more detail  the model with hard-core interaction, i.e.\
$\beta=-\infty$.
Then a component $\xi_i$ can increase only when $\xi_j=0$ for every
$j\sim i$, and it follows that
the set
\begin{equation}
  \label{GO}
  \GO:=\{\xi\in \Z_{+}^{n}: \xi_i \xi_j=0 \text{ when } i\sim j\}
\end{equation}
is absorbing, i.e., if the Markov chain $\xi(t)$ reaches $\gO$, then it will
stay there forever. 
In particular, if the chain starts at $\bfo$, then it will stay in $\GO$.
Moreover, 
it is easy to see that 
given any initial state, the process will a.s.\ 
%(i.e., with probability 1)
reach $\GO$ at some time
(and then thus stay in $\GO$).
Hence, any state $\xi\in\Z_{+}^n\setminus\GO$ 
(i.e., with at least two neighbouring non-zero components) 
is  a non-essential state, and the long-term behaviour of $\xi(t)$ depends
only on its behaviour on $\GO$. 

Therefore, in the hard-core case we consider the Markov chain 
with the state space $\GO$.
This chain on $\GO$ is easily seen to be irreducible.

Note that
$\GO$ is the set of  configurations such that $\innprod{A\xi,\xi}=0$, where
$A$ is 
the adjacency matrix of graph $G$.
Equivalently, a configuration $\xi$ belongs to $\GO$ if and only if
the set $\set{i:\xi_i>0}$ is an independent set of vertices in $G$
(see Definition~\ref{D1}).

\begin{rem}
  In the special case $\alpha=0$, the Markov chain with the
hard-core interaction 
$\beta=-\infty$
can be regarded as a simple symmetric random walk 
on the subgraph $\GO\subseteq\Z_+^n$. 
In this special case, \eqref{W} yields $W(\xi)=0$ for every $\xi\in\GO$, 
so by \eqref{eqres}, the conductance of every edge in $\GO$ is 1.
We may also regard this network as a network on $\Z_+^n$
with the conductance for edge $\{\xi-{\bf e}_k, \xi  \}$  defined by
\begin{equation}
\label{eqres1}
C_{\xi-{\bf e}_k, \xi}=
\begin{cases}
1,& \text{  if  }   \sumij\xi_i\xi_j=0,\\
0, & \text{  if  }   \sumij\xi_i\xi_j\neq 0,
\end{cases}
\end{equation}
where the second case  simply means that the edge is not wired.
\end{rem}

%\subsection{Some further notation}

\section{Proof of \refT{Tmain}}\label{Spf}

In this section we prove \refT{Tmain} by proving a long series of lemmas
treating different cases.
Note that we include the hard-core case $\gb=-\infty$. (For emphasis we say
this explicitly each time it may occur.)
Recall that $\Gabg$ denotes $\ZZn$ regarded as an electrical network with
conductances \eqref{eqres} corresponding to the CTMC $\xi(t)$.

As a first application of the method of electric networks we treat the case
$\ga>0$.
%\refT{Tmain}\ref{Tmainta}.

\begin{lemma}\label{LT1}
 If\/ $\ga>0$ and $-\infty\le \gb <\infty$, then the CTMC $\xi(t)$ is transient.
\end{lemma}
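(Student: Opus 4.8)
The plan is to use the method of electric networks together with Rayleigh's monotonicity law to show that the effective resistance $\Reff(\Gabg)$ is finite. Since the Markov chain is reversible (see \refSS{SSrev}), transience is equivalent to $\Reff(\Gabg)<\infty$, so it suffices to exhibit, inside the network $\Gabg$ on $\ZZn$, a subnetwork obtained by deleting edges whose effective resistance from $\bfo$ to infinity is finite; by monotonicity this bounds $\Reff(\Gabg)$ from above. The natural candidate subnetwork is a single ray: restrict attention to the coordinate axis in direction $\bfe_1$, i.e.\ the path $\bfo, \bfe_1, 2\bfe_1, 3\bfe_1, \dots$, deleting every other edge of $\ZZn$.

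Along this ray the conductance of the edge $\{(k-1)\bfe_1, k\bfe_1\}$ is, by \eqref{eqres}, equal to $e^{W(k\bfe_1)}$, and from \eqref{W} we compute $W(k\bfe_1)=\frac{\alpha}{2}k(k-1)$ (the $\beta$-term vanishes since all but one coordinate are zero, so this works for $\gb=-\infty$ as well). Hence the resistance of that edge is $e^{-\alpha k(k-1)/2}$, and the effective resistance of the ray is the series sum
\[
\sum_{k=1}^{\infty} e^{-\alpha k(k-1)/2}.
\]
Since $\alpha>0$, the exponent $-\alpha k(k-1)/2\to-\infty$ quadratically fast, so this series converges. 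Therefore $\Reff(\Gabg)\le \sum_{k=1}^{\infty} e^{-\alpha k(k-1)/2}<\infty$, and the CTMC $\xi(t)$ is transient.

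I do not anticipate a serious obstacle here: the only point requiring a little care is making sure the reduction to a single ray is legitimate, namely that deleting all edges of $\ZZn$ except those on the chosen ray is exactly the operation covered by Rayleigh's monotonicity law (deletion of edges can only increase effective resistance), and that the hard-core case $\gb=-\infty$ is genuinely included, which it is because the chosen ray lies entirely in $\GO$ (each state $k\bfe_1$ has at most one nonzero coordinate, hence is an independent-set configuration) and there the conductances \eqref{eqres1} equal $1\cdot e^{W(k\bfe_1)}$ as in the $\beta>-\infty$ case. One could alternatively note that transience is monotone in $\alpha$ and reduce to a limiting statement, but the direct single-ray computation is cleanest and self-contained.
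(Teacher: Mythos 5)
Your proposal is correct and follows essentially the same route as the paper: restrict to the ray $\Z_+\bfe_1$, compute $W(k\bfe_1)=\frac{\alpha}{2}k(k-1)$, sum the series resistances $\sum_k e^{-\alpha k(k-1)/2}<\infty$, and conclude by Rayleigh's monotonicity law. Your extra remark verifying that the ray lies in $\GO$ so the hard-core case $\gb=-\infty$ is covered is a nice touch the paper leaves implicit.
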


\begin{proof}%[Proof of Theorem~\ref{T00}.]
Consider the subnetwork of $\Gabg$ consisting of the axis
$\Gamma'=\Z_+\bfe_1=\{\bx\in \Z_{+}^n:  x_i=0,\, i\neq 1\}$. 
For $k\geq 1$, the conductance on the edge connecting  $(k-1){\bf e}_1=(k-1, 0,\dots,0)$ and 
$k{\bf e}_1=(k, 0,\dots, 0)$ is by \eqref{eqres} and \eqref{W}
equal to 
\begin{equation}
  \label{LT1W}
e^{W(k\bfe_1)}=e^{\frac{\alpha}{2}k(k-1)};
\end{equation}
hence the resistance is $e^{-\frac{\alpha}{2}k(k-1)}$.
 Since the resistors in $\Gamma'$ 
are connected in series,  the effective resistance of this subnetwork is
\begin{equation}
\Reff(\Gamma')=\sum\limits_{k=1}^{\infty}e^{-\frac\alpha{2}{k(k-1)}}<\infty,
\end{equation}
as $\alpha>0$.
Therefore, the effective resistance of the original network $\Gabg$ is also
finite. Consequently, the Markov chain is transient.
\end{proof}

We give similar arguments for the other transient cases.
Recall that $A$ is a non-negative
symmetric matrix with eigenvalues $\gl_1,\dots,\gl_n$.
Thus there exists an orthonormal basis of eigenvectors  ${\bf v}_i$ with
$A{\bf v}_i=\lambda_i{\bf v}_i$, $i=1,\ldots, n$. By the Perron--Frobenius
theorem ${\bf v}_1$ can be chosen non-negative, i.e.\ ${\bf v}_1\in \R_{+}^n$. 
(If $G$ is connected, then $\bfv_1$ is unique and strictly positive.)

\begin{lemma}\label{LT2}
  If\/ $\ga<0$ and $\ga+\gb\gl_1\ge0$, % or $\ga=0$ and $\gb>0$,
 then the CTMC\/ $\xi(t)$ is transient.
\end{lemma}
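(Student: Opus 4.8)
The plan is to exhibit a subnetwork of $\Gabg$ on which the effective resistance is finite, and then conclude transience by Rayleigh's monotonicity law, exactly as in the proof of \refL{LT1}. The natural candidate is the ``ray'' in the direction of the Perron--Frobenius eigenvector $\bfv_1$. Concretely, write $\bfv_1=(v_1,\dots,v_n)$ with all $v_i\ge0$ and $\sum_i v_i^2=1$, and consider the lattice path $\bfo=\xi^{(0)},\xi^{(1)},\xi^{(2)},\dots$ in $\ZZn$ that stays as close as possible to the line $\{t\bfv_1: t\ge0\}$: at step $m$ we increase the coordinate $i$ for which $\xi^{(m)}_i$ lags furthest behind $t v_i$ for the current $t\approx m/\sum_j v_j$. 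This gives a self-avoiding path to infinity, i.e.\ a subnetwork $\Gamma'$ of $\Gabg$ consisting of resistors in series, and I need the sum of the resistances $e^{-W(\xi^{(m)})}$ along it to converge.

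The key computation is the growth rate of $W$ along this path. Recall from \eqref{W} that
\begin{equation*}
W(\xi)=\tfrac12\innprod{(\alpha E+\beta A)\xi,\xi}-\tfrac{\alpha}{2}S(\xi).
\end{equation*}
Since $\xi^{(m)}$ tracks $t\bfv_1$ with $t$ of order $m$, and $\bfv_1$ is a unit eigenvector with $A\bfv_1=\gl_1\bfv_1$, the quadratic form gives
\begin{equation*}
\innprod{(\alpha E+\beta A)\xi^{(m)},\xi^{(m)}}\approx t^2(\alpha+\beta\gl_1)\ge0
\end{equation*}
by the hypothesis $\alpha+\beta\gl_1\ge0$, while $S(\xi^{(m)})=\sum_i\xi^{(m)}_i\approx t\sum_i v_i$ is only linear in $m$. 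Hence $W(\xi^{(m)})\ge -\tfrac{\alpha}{2}S(\xi^{(m)})+o(m) = c\,m+o(m)$ for some constant $c=\tfrac{|\alpha|}{2}\sum_i v_i>0$ (here $\alpha<0$ is used, and $\sum_i v_i>0$ since $\bfv_1\ne\bfo$ is non-negative). Therefore $e^{-W(\xi^{(m)})}\le e^{-cm+o(m)}$, the series $\sum_m e^{-W(\xi^{(m)})}$ converges, so $\Reff(\Gamma')<\infty$, and thus $\Reff(\Gabg)\le\Reff(\Gamma')<\infty$, giving transience. (In the hard-core case $\gb=-\infty$ the statement is vacuous since then $\alpha+\beta\gl_1=-\infty<0$ whenever $e(G)>0$, i.e.\ whenever $\gl_1>0$; if $e(G)=0$ we are in the trivial situation of \refR{Rempty} with effectively $\gb=0$, and $\alpha<0$ forces $\alpha+\beta\gl_1<0$, so again there is nothing to prove — but one should say a word confirming this.)

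The main obstacle is making the discretisation rigorous: the path $\xi^{(m)}$ only approximates $t\bfv_1$, so I must control the error term in $W(\xi^{(m)})$ carefully. The cleanest way is to choose a fixed integer vector: let $\bfu=(u_1,\dots,u_n)\in\ZZn$ be a non-negative integer vector that is ``approximately parallel'' to $\bfv_1$ in the sense that $\innprod{(\alpha E+\beta A)\bfu,\bfu}\ge0$ — for instance, if $G$ is connected take $\bfu$ a suitable integer approximation of $\bfv_1$, or more robustly observe that since $\alpha+\beta\gl_1\ge0$ and $\bfv_1$ is strictly positive (connected case), the form $\innprod{(\alpha E+\beta A)\bx,\bx}$ is $\ge0$ on an open cone around $\bfv_1$, which contains a rational, hence integer, ray $\N\bfu$; in the disconnected case apply this to the component realising $\gl_1$ and pad with zeros (cf.\ \refR{Rdisconnected}). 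Then walk along the discrete ray $\{k\bfu:k\ge0\}$ through the lattice: between $k\bfu$ and $(k+1)\bfu$ insert a fixed finite path of $S(\bfu)$ unit steps; along the whole infinite path $W$ at the $k$-th block is $\tfrac12 k^2\innprod{(\alpha E+\beta A)\bfu,\bfu} - \tfrac{\alpha}{2}k S(\bfu)+O(k)\ge -\tfrac{\alpha}{2}kS(\bfu)+O(k)$, which still grows linearly with coefficient $\tfrac{|\alpha|}{2}S(\bfu)>0$, the intermediate vertices within a block contribute a bounded correction, and the geometric-type series converges. This removes all approximation subtleties and reduces the lemma to the one-dimensional series estimate already used in \refL{LT1}.
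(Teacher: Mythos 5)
Your primary argument --- follow a nearest-lattice-point path along the Perron ray $\set{t\bfv_1:t\ge0}$, note that the quadratic part of $W$ is bounded below because $\ga+\gb\gl_1\ge0$ while the linear part $-\frac{\ga}{2}S$ grows like $\frac{|\ga|}{2}m$, and sum the resulting exponentially small resistances in series --- is exactly the paper's proof. The ``approximation subtlety'' you flag as the main obstacle is not actually an obstacle: since the path stays within bounded distance of the ray, writing $y_k=\sum_i a_{k,i}\bfv_i$ in the orthonormal eigenbasis gives $a_{k,i}=O(1)$ for $i\ge2$, hence $\innprod{(\ga E+\gb A)y_k,y_k}=(\ga+\gb\gl_1)a_{k,1}^2+\sum_{i\ge2}(\ga+\gb\gl_i)a_{k,i}^2\ge -C$ for an absolute constant $C$, and therefore $W(y_k)\ge\frac{|\ga|}{2}k-C$. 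That $O(1)$ control is all that is needed, and it is what the paper does.

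Your proposed ``cleanest'' rigorisation via a fixed integer ray, however, breaks down in the critical case $\ga+\gb\gl_1=0$, which the lemma must cover. There the matrix $\ga E+\gb A$ is negative semidefinite and nonzero, so $\innprod{(\ga E+\gb A)\bx,\bx}\ge0$ holds \emph{only} on the kernel of $\ga E+\gb A$, a proper subspace with empty interior: there is no open cone around $\bfv_1$ on which the form is nonnegative, and there need be no nonzero integer vector $\bfu$ with $\innprod{(\ga E+\gb A)\bfu,\bfu}\ge0$ at all. Concretely, for the star $\sfK_{1,m}$ of \refE{EK1m} with $m$ not a perfect square and $\ga=-\gb\sqrt m<0$, the relevant kernel is spanned by $(\sqrt m,1,\dots,1)$ and contains no rational vector, so every integer ray satisfies $\innprod{(\ga E+\gb A)(k\bfu),k\bfu}=-ck^2$ for some $c>0$; this quadratic loss swamps the linear gain $\frac{|\ga|}{2}kS(\bfu)$, the resistances $e^{-W(k\bfu)}$ blow up, and the series diverges. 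So the integer-ray write-up loses precisely the boundary case $\ga+\gb\gl_1=0$; keep your first version (the paper's), with the bounded-distance error control made explicit as above. (Your side remark that the hard-core case $\gb=-\infty$ makes the hypothesis vacuous is correct.)
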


\begin{proof} 
%Recall the theorem assumption, namely, $\alpha<0$ and $\beta>0$. 
For each $t\geq 0$, define $x(t):=t{\bf v}_1$ and 
$y(t):=(\lfloor x_1(t) \rfloor, \ldots, \lfloor x_n(t) \rfloor)$.  By construction, $y(t)$ is piecewise constant. Let $y_0=0, y_1, y_2,\ldots$ be 
the sequence of different values of $y(t)$, where at each $t$ such that two or more coordinates of $y(t)$ jump simultaneously,
we insert intermediate  vectors, so that only one coordinate changes at a time,   and $\lVert y_{k+1}-y_k\rVert =1$ for 
all $k$. Then $S(y_k)$, the sum of coordinates of $y_k$, is equal to $k$, and thus $k/n\leq \lVert y_k\rVert \leq k$.
%for some constants $c, C>0$.
Furthermore, for each $k$ there is a $t_k$ such that 
$\lVert y_k-y(t_k)\rVert \le n$,
and thus 
\begin{equation}
  \label{yx}
\lVert y_k-t_k\bfv_1\rVert =\lVert y_k-x(t_k)\rVert =O(1).
\end{equation}
Express $y_k$ in the basis ${\bf v}_1,\ldots,\bfv_n$ as
$y_k=\sum_{i=1}^na_{k,i}{\bf v}_i$;
then \eqref{yx} implies $a_{k,i}=O(1)$ for $i\neq 1$. Thus
\begin{equation}
  \label{WYk0}
\innprod{(\alpha E+\beta A)y_k,
  y_k}=\sum\limits_{i=1}^n(\alpha+\beta\lambda_i)a^2_{k,i}=(\alpha+\beta\lambda_1)a_{k,
  1}^2+O(1)\geq O(1),
\end{equation}
since $\alpha+\beta\lambda_1\ge 0$ by assumption. Therefore,
by \eqref{W},
\begin{equation}
  \label{WYk}
  W(y_k)\geq -\frac{\alpha}{2}S(y_k)+O(1)=\frac{|\alpha|}{2}k+O(1).
\end{equation}
%where $W$ is the function defined in~(\ref{W}).
Consider the subnetwork $\Gamma'\subset \Gabg$ formed by the vertices
$\{y_k\}$. The resistance of the edge connecting $y_{k-1}$ and $y_k$
is equal to 
\begin{equation}
R_k=e^{-W(y_k)}\leq Ce^{- \frac{|\alpha|}{2}k},  
\end{equation}
so that the effective resistance of the subnetwork
$\Reff(\gG')=\sum_{k}R_k<\infty$.
Hence $\Reff(\Gabg)<\infty$ and the Markov chain is transient. 
\end{proof}

\begin{lemma}\label{LT3}
  If\/  $\ga=0$, $\gb>0$ and $e(G)>0$,
 then the CTMC\/ $\xi(t)$ is transient.
\end{lemma}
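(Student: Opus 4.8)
The plan is to exhibit a one-dimensional subnetwork of $\Gabg$ with finite effective resistance and then invoke Rayleigh's monotonicity law together with the criterion (transience $\Leftrightarrow$ finite effective resistance) recalled in \refSS{SSelectric}.

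First, since $e(G)>0$, I would fix an edge of $G$ and relabel the vertices so that $1\sim 2$. With $\ga=0$ the weight \eqref{W} reduces to $W(\xi)=\gb\sumij\xi_i\xi_j$, so any configuration $\xi$ supported on $\{1,2\}$ satisfies $W(\xi)=\gb\xi_1\xi_2$; this quadratic growth is what drives the argument.

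Next I would climb the diagonal of the coordinate plane $\{\xi:\xi_i=0\text{ for }i\ne1,2\}$: set $z_0=\bfo$ and let $z_m$ be obtained from $z_{m-1}$ by alternately adding $\bfe_1$ and $\bfe_2$, so that $z_{2k}=k(\bfe_1+\bfe_2)$ and $z_{2k+1}=(k+1)\bfe_1+k\bfe_2$ for $k\ge0$. Consecutive vertices differ by a unit vector, so these are genuine edges of $\ZZn$, and since all coordinates other than $1$ and $2$ stay zero, no other edge of $G$ contributes to $W$ along the path. A direct computation gives $W(z_{2k+1})=\gb k(k+1)$ and $W(z_{2k+2})=\gb(k+1)^2$, and by \eqref{eqres} the resistance of the edge $\{z_{m-1},z_m\}$ equals $e^{-W(z_m)}$. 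As these resistors are in series, the subnetwork $\gG'\subset\Gabg$ on $\{z_m\}$ has
\[
\Reff(\gG')=\sum_{m\ge1}e^{-W(z_m)}=\sum_{k\ge0}\bigpar{e^{-\gb k(k+1)}+e^{-\gb(k+1)^2}}<\infty,
\]
because $\gb>0$. Hence $\Reff(\Gabg)\le\Reff(\gG')<\infty$, and so $\xi(t)$ is transient.

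I do not expect a genuine obstacle here: the cooperative interaction $\gb>0$ across a single edge already forces $W$ to grow quadratically in the distance travelled, far more than enough to make one escaping path have summable resistance. The only points requiring a little care are that the path uses honest nearest-neighbour edges of $\ZZn$ (guaranteed by the alternating construction) and that keeping all but two coordinates equal to zero isolates the single relevant edge of $G$ in the formula for $W$. This is the same mechanism as in \refL{LT1} and \refL{LT2}, with $\gb$ on the edge $\{1,2\}$ playing the role that $\ga$ played there.
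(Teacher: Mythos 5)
Your proof is correct. The verification is straightforward: with $\ga=0$ and all coordinates other than $1,2$ held at zero, $W(z_m)=\gb\, (z_m)_1 (z_m)_2$ grows like $\gb m^2/4$, the edge resistances $e^{-W(z_m)}$ are summable for $\gb>0$, and Rayleigh's monotonicity law finishes the argument exactly as in \refL{LT1}. The paper proves this lemma differently in one respect: it reuses the path $\set{y_k}$ from \refL{LT2}, which follows the direction of the Perron eigenvector $\bfv_1$, and derives $W(y_k)\ge c_1k^2$ from the asymptotics $a_{k,1}=ck+O(1)$ together with $\gl_1>0$. That route has the advantage of handling \refL{LT2} and \refL{LT3} with a single construction (and the paper itself remarks afterwards that other paths would do). Your staircase in the coordinate plane of a single edge $1\sim2$ is more elementary: it needs no spectral input at all beyond the existence of one edge, and the computation of $W$ along the path is exact rather than asymptotic. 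Both arguments are instances of the same mechanism — quadratic growth of $W$ along an escaping nearest-neighbour path forces finite effective resistance — so the difference is one of economy rather than substance.
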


\begin{proof}
  We do exactly as in the proof of \refL{LT2} up to \eqref{WYk0}. Now
  $\ga=0$, so \eqref{WYk} is no longer good enough. Instead we note that
\eqref{yx} implies $a_{k,1}=t_k+O(1)$ and also
$k=S(y_k)=S(x(t_k))+O(1)=Ct_k+O(1)$,
where $C=S(\bfv_1)>0$. Thus, with $c=C^{-1}$, 
\begin{equation}\label{WY3}
a_{k,1}=
  t_k+O(1)
=ck+O(1).
\end{equation}
Furthermore, $\gl_1>0$ since $e(G)>0$, and thus \eqref{W},  \eqref{WYk0} and
\eqref{WY3} 
yield, recalling $\ga=0$, 
\begin{equation}
  \label{WYk2}
  W(y_k)
%=\frac{1}{2}\gb\gl_1 a_{k,1}^2+O(1)
=\frac12 \gb\gl_1(ck+O(1))^2+O(1) \geq c_1k^2
\end{equation}
for some $c_1>0$ and all large $k$.

It follows again that the subnetwork $\Gamma':=\set{y_k}$ has finite effective
resistance, and thus the Markov chain is transient.
\end{proof}

Alternatively, several other choices of paths $\set{y_k}$ could have been
used in the proof of \refL{LT2}, for example $\set{(k,1,0,\dots,0):k\ge0}$.

\begin{lemma}\label{LT0}
  If\/  $\ga=0$, $\gb=0$ and $n\ge3$,
 then the CTMC\/ $\xi(t)$ is transient.
\end{lemma}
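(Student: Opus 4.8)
The statement to prove is \refL{LT0}: when $\ga=\gb=0$ and $n\ge3$, the CTMC $\xi(t)$ (which in this regime is a continuous-time simple random walk on the octant $\ZZn$, cf.\ \refR{Rsimple}) is transient. Since the chain is reversible with all conductances equal to $1$ (all rates \eqref{rate} equal $1$, so $W\equiv0$ on $\ZZn$ up to the additive normalisation, and \eqref{eqres} gives conductance $1$ on every edge), by the electric-network criterion it suffices to show $\Reff(\ZZn)<\infty$. This is exactly the second half of \refE{Esimple}, so the plan is simply to carry out that argument cleanly.

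The approach I would take is the comparison with SRW on $\Z^n$. First I would invoke the classical fact that simple random walk on $\Z^n$ is transient for $n\ge3$; equivalently, the unit-conductance network on $\Z^n$ has finite effective resistance from the origin to infinity. Next, fix a unit current flow (equivalently, the harmonic potential) from $\bfo$ to infinity on $\Z^n$. By the symmetry of $\Z^n$ under sign changes of coordinates, the potential takes the same value at all $2^{(\#\text{nonzero coords})}$ points of the form $(\pm x_1,\dots,\pm x_n)$; hence one may short-circuit each such orbit without altering $\Reff$. Call the resulting network $\gG'$; it is still transient, i.e.\ $\Reff(\gG')<\infty$. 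Finally, identify $\gG'$ with a network carried by $\ZZn$: a vertex of $\gG'$ corresponds to the point $(|x_1|,\dots,|x_n|)\in\ZZn$, and the edges merged in the short-circuiting give each edge of $\ZZn$ a conductance between $2$ and $2^n$ (the count depends only on how many coordinates of the two endpoints are nonzero). By Rayleigh monotonicity, multiplying conductances by a bounded factor changes $\Reff$ by at most that factor, so $\Reff(\ZZn)\le 2^n\,\Reff(\gG')<\infty$. Therefore $\xi(t)$ is transient.

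Alternatively — and this is perhaps cleaner to write — one can give a direct proof by exhibiting a flow of finite energy on $\ZZn$ from $\bfo$ to infinity, or by citing the description of the tree $NT_{2.5849}$ in \cite[Section 2.2.9]{DS} or the finite-energy flow construction in \cite[page~41]{Lyons-Peres} (stated there for $\Z^n$ but working verbatim for $\ZZn$). Either route reduces to a known computation. The one point requiring a little care — the main (minor) obstacle — is the bookkeeping in the folding step: verifying that when the $2^n$ sign-orbits are collapsed, each resulting edge of $\ZZn$ inherits a conductance that is bounded above and below by absolute constants, so that Rayleigh's law applies. This is a routine case-check on the number of vanishing coordinates of the edge's endpoints, and I would state it without belabouring the arithmetic. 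Since \refE{Esimple} already records this argument, in the write-up I would simply refer back to it:

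\begin{proof}
This is the simple random walk on $\ZZn$; see \refR{Rsimple} and the proof of transience for $n\ge3$ given in \refE{Esimple}.
\end{proof}
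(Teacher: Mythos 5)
Your proposal is correct and matches the paper's own proof, which likewise reduces the case $\ga=\gb=0$, $n\ge3$ to simple random walk on $\ZZn$ via \refR{Rsimple} and cites the transience argument recorded in \refE{Esimple}. The extra detail you give on the sign-folding comparison with $\Z^n$ is just an expansion of what that example already contains.
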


\begin{proof}
  As said in \refR{Rsimple} and \refE{Esimple}, 
in this case, the Markov chain is just simple
  random walk on $\ZZn$, which is transient for $n\ge3$.
\end{proof}

\begin{lemma}\label{LT0+}
  If\/  $\ga=0$, $\gb>0$, $e(G)=0$ and $n\ge3$,
 then the CTMC\/ $\xi(t)$ is transient.
\end{lemma}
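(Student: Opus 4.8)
The plan is to notice that the hypotheses collapse the model to one we have already handled. First I would observe that $e(G)=0$ means $A=0$, so $(A\xi)_i=0$ for every $i$ and every $\xi\in\ZZn$; combined with $\ga=0$, the up-rate in \eqref{rate} equals $e^{\ga\xi_i+\gb(A\xi)_i}=e^{0}=1$, and the down-rates in \eqref{q} are always $1$. Hence \emph{every} transition rate of $\xi(t)$ is $1$, so $\xi(t)$ is precisely the continuous-time simple random walk on $\ZZn$ (equivalently, $n$ independent symmetric reflected random walks on $\Z_+$), exactly the reduction recorded in \refR{Rempty}: when $e(G)=0$ and $\ga=0$ we are in the case $\ga=\gb=0$.

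Then I would simply invoke \refL{LT0}, which states that this walk is transient for $n\ge3$; since all rates are constant (in particular bounded and bounded away from $0$), transience of the CTMC is equivalent to transience of its embedded jump chain, and the latter is literally SRW on $\ZZn$. This completes the proof. If one prefers to keep the argument self-contained at the level of electric networks, one can instead repeat \refE{Esimple}: by \eqref{W} we have $W\equiv 0$ here (the sum over the edges of $G$ is empty), so $\Gaog$ is $\ZZn$ with all conductances equal to $1$, and $\Reff(\ZZn)<\infty$ for $n\ge3$ by the finite-energy flow construction cited there (or by comparison with SRW on $\Z^n$, $n\ge3$).

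I do not anticipate any real obstacle: the statement is purely a matter of recognising the reduction. The only step warranting a word of care is the passage between the CTMC and its embedded discrete-time chain, and that is immediate because the jump rates are identically $1$; everything else is a direct citation of \refL{LT0} (or of \refE{Esimple}).
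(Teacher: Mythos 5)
Your proposal is correct and follows essentially the same route as the paper: since $e(G)=0$ forces $A=0$, the value of $\gb$ is irrelevant and the chain reduces to the case $\ga=\gb=0$, whence transience for $n\ge3$ follows from \refL{LT0}. The extra remarks about the embedded chain and the unit-conductance network are harmless but not needed.
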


\begin{proof}
  When $e(G)=0$, the parameter $\gb$ is irrelevant and may be changed to
  $0$.
The result thus follows from \refL{LT0}.
\end{proof}

\begin{lemma}\label{LT4}
  If\/  $\ga=0$, $\gb\ge-\infty$ and $\kk\ge3$,
 then the CTMC\/ $\xi(t)$ is transient.
\end{lemma}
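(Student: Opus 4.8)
The plan is to exhibit a subnetwork of $\Gabg$ with finite effective resistance, just as in the proofs of Lemmas~\ref{LT2} and~\ref{LT3}. Since $\kk=\kk(G)\ge3$, fix an independent set $\{i_1,i_2,i_3\}$ of three pairwise non-adjacent vertices of $G$. Because $\ga=0$, along any configuration $\xi$ supported on $\{i_1,i_2,i_3\}$ we have, by \eqref{W}, that $W(\xi)=\gb\sum_{i\sim j}\xi_i\xi_j=0$, since no two of these vertices are adjacent. Hence, by \eqref{eqres}, every edge of the sub-lattice $\Z_+\bfe_{i_1}+\Z_+\bfe_{i_2}+\Z_+\bfe_{i_3}\cong\Z_+^3$ has conductance $1$, and this holds also in the hard-core case $\gb=-\infty$ (there $W\equiv0$ on $\GO$, which contains this sub-lattice, and the conductances are given by \eqref{eqres1}).

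Thus the subnetwork $\Gamma'\subset\Gabg$ induced by $\{\bx\in\Z_+^n: x_i=0\text{ for }i\notin\{i_1,i_2,i_3\}\}$ is precisely a copy of the unit-conductance network on $\Z_+^3$, i.e.\ simple random walk on $\Z_+^3$. By \refR{Rsimple} and \refE{Esimple}, $\Reff(\Z_+^3)<\infty$, so $\Reff(\Gamma')<\infty$. By Rayleigh's monotonicity law (adding back the deleted edges only decreases resistance), $\Reff(\Gabg)\le\Reff(\Gamma')<\infty$, and therefore the CTMC $\xi(t)$ is transient.

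I do not expect any real obstacle here; the one point that needs a word of care is the hard-core case $\gb=-\infty$, where one must recall from \refSS{SShardcore} that the chain lives on $\GO$ and that a sub-lattice spanned by an independent set of vertices is contained in $\GO$ with all conductances equal to $1$, so the same comparison applies verbatim. (Strictly one should note $\kk\ge3$ forces $n\ge3$, so $\Z_+^3$ genuinely embeds.) One could alternatively phrase this without electric networks, by observing that the chain restricted to moves on the three chosen coordinates dominates a transient simple random walk on $\Z_+^3$, but the electric-network argument is the shortest and matches the style of the preceding lemmas.
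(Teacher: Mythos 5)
Your proof is correct and follows essentially the same route as the paper: both restrict to the sub-lattice $\Z_+^3$ spanned by three pairwise non-adjacent vertices, observe that $W\equiv 0$ there so all conductances equal $1$, and conclude via transience of simple random walk on the octant $\Z_+^3$ together with Rayleigh's monotonicity law. Your extra remark on the hard-core case matches the paper's implicit observation that this sub-lattice lies in $\GO$.
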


\begin{proof}%[Proof of Part 2) of Theorem~\ref{T1}.]

%Recall that $\kappa \geq 3$ in  Part 2) of Theorem~\ref{T1}. 
Since $\kk\ge3$, 
there are three vertices of the graph $G$ not adjacent to each other;
 w.l.o.g.\ let them be $1$, $2$ and $3$.
Consider the subnetwork 
\begin{equation}
  \Gamma':=\Z_+^3\times\set{0}^{n-3}=\set{(\xi_1,\xi_2,\xi_3,0,\dots,0)}
\subset\GO
\subset\Gabg=\Z_+^n.
\end{equation}
By \eqref{W}, we have in this case $W(\xi)=0$ for every $\xi\in\gG'$, and thus
(\ref{eqres}) implies that in the corresponding electrical network 
all edges in $\gG'$ have conductance 1, and thus resistance 1.
Hence, the Markov chain corresponding to the network
$\gG'$ is simple random walk on $\gG'\cong\Z_+^3$.
By \refR{Rsimple} and \refE{Esimple}, 
a simple random walk on the octant $\Z_+^3$ is
transient, and thus  $\Reff(\gG')=\Reff(\Z_+^3)<\infty$.
Consequently,
$\Reff(\Gabg)\le\Reff(\gG')<\infty$, and thus the Markov chain is transient.
\end{proof}

We turn to proving recurrence in the remaining cases.

\begin{lemma}\label{LR1}
  If\/  $\ga<0$,  $\ga+\gb\gl_1<0$ and $\gb\ge0$,
 then the CTMC\/ $\xi(t)$ is recurrent.
\end{lemma}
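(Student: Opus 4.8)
The goal is to show that when $\ga<0$, $\gb\ge 0$ and $\ga+\gb\gl_1<0$, the effective resistance $\Reff(\Gabg)$ is infinite. The natural strategy, mirroring \refE{Esimple}, is to produce a lower bound on $\Reff$ by short-circuiting the level sets $V_L=\set{x\in\Z_+^n:S(x)=L}$ defined in \eqref{VL}, obtaining a network $\GGxx$ which is a chain of resistors $R_L$ in series, where $R_L$ is the effective resistance between $V_{L-1}$ and $V_L$. Since the edges between these two levels are in parallel and each has conductance $e^{W(\xi)}$ for the appropriate $\xi\in V_L$, we get $R_L = \bigl(\sum_{\xi\in V_L} e^{W(\xi)}\bigr)^{-1}$, so that $\Reff(\Gabg)\ge\Reff(\GGxx)=\sum_{L\ge1}R_L = \sum_{L\ge1}\bigl(\sum_{\xi\in V_L}e^{W(\xi)}\bigr)^{-1}$. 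It then suffices to show this series diverges, i.e.\ that $\sum_{\xi\in V_L}e^{W(\xi)}$ does not grow too fast.

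First I would bound $W(\xi)$ from above on $V_L$. Writing $\xi=\sum_i a_i\bfv_i$ in the orthonormal eigenbasis, \eqref{W} gives $W(\xi)=\tfrac12\sum_i(\ga+\gb\gl_i)a_i^2 - \tfrac{\ga}{2}S(\xi)$. The hypothesis $\gb\ge0$ ensures $\ga+\gb\gl_i\le\ga+\gb\gl_1<0$ for every $i$ (since $\gl_i\le\gl_1$ and $\gb\ge 0$), so the quadratic form $\innprod{(\ga E+\gb A)\xi,\xi}$ is negative definite; hence $W(\xi)\le -\tfrac{\ga}{2}S(\xi) = \tfrac{|\ga|}{2}L$ for every $\xi\in V_L$. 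Since $|V_L|$ is the number of compositions, $\binom{L+n-1}{n-1}=O(L^{n-1})$, this yields $\sum_{\xi\in V_L}e^{W(\xi)} \le O(L^{n-1})\,e^{|\ga|L/2}$, which grows exponentially — far too fast for the series $\sum_L R_L$ to diverge. So the crude bound is not enough, and this is the main obstacle: one must exploit the negative-definite quadratic term to show that only a subexponential number of $\xi\in V_L$ actually contribute a significant amount to $\sum_\xi e^{W(\xi)}$.

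The fix is a more careful estimate: for $\xi\in V_L$, the negative-definiteness gives $\innprod{(\ga E+\gb A)\xi,\xi}\le -\delta\norm{\xi}^2$ for some $\delta>0$ (namely $\delta=-(\ga+\gb\gl_1)>0$ works after noting $\gl_n$ could be negative but $\ga+\gb\gl_i$ is bounded above by $\ga+\gb\gl_1<0$ for all $i$ when $\gb\geq 0$; one takes $\delta=\min_i(-(\ga+\gb\gl_i))>0$). Combined with $\norm{\xi}^2\ge S(\xi)^2/n = L^2/n$, this gives $W(\xi)\le -\tfrac{\delta}{2n}L^2 + \tfrac{|\ga|}{2}L$, so $\sum_{\xi\in V_L}e^{W(\xi)}\le O(L^{n-1})\exp\bigl(\tfrac{|\ga|}{2}L-\tfrac{\delta}{2n}L^2\bigr)\to 0$ as $L\to\infty$. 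Hence $R_L=\bigl(\sum_{\xi\in V_L}e^{W(\xi)}\bigr)^{-1}\to\infty$, in particular $R_L$ is bounded below by a positive constant for all large $L$, so $\sum_L R_L=\infty$. Therefore $\Reff(\Gabg)=\infty$ and the chain is recurrent. (In the hard-core sub-case $\gb=-\infty$ this lemma does not apply since the hypothesis requires $\gb\ge0$, so no separate treatment is needed; positive recurrence versus null recurrence is addressed separately via summability of $\mu$, cf.\ \refL{LI}, and is not part of this lemma's claim.)
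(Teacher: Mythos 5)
Your proof is correct and follows essentially the same route as the paper: short-circuit the level sets $V_L$, bound $W(\xi)\le -\frac{\gd}{2n}L^2+\frac{|\ga|}{2}L$ on $V_L$ via the eigenvalue bound $\ga+\gb\gl_i\le\ga+\gb\gl_1=-\gd$ and Cauchy--Schwarz, and conclude that the total conductance between consecutive levels is $O(L^{n-1})e^{-cL^2}=O(1)$, so the series of resistances diverges. The only cosmetic difference is that you write $R_L=\bigl(\sum_{\xi\in V_L}e^{W(\xi)}\bigr)^{-1}$, ignoring that a vertex $\xi\in V_L$ may have up to $n$ edges to $V_{L-1}$; this only changes the bound by a factor of $n$ and does not affect the conclusion.
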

  
\begin{proof}
  Let $\gd=-(\ga+\gb\gl_1)>0$.
The eigenvalues of the symmetric matrix $\ga E+\gb A$ are
$\ga+\gb\gl_i\le\ga+\gb\gl_1=-\gd$, $i=1,\dots,n$. Thus, by \eqref{W},
\begin{equation}
\label{WR1}
W(\xi)
=
 \frac{1}{2}\innprod{(\alpha E+\beta A)\xi, \xi}-\frac{\alpha}{2}S(\xi)
\le -\frac{\gd}2\innprod{\xi,\xi}+\frac{|\alpha|}{2}S(\xi)
.
\end{equation}

We now argue as in \refE{Esimple}.
Let again $V_L$ be defined by \eqref{VL},
%$V_L=\set{x\in\Z_+^n:S(x)=L}$, $L=0,1,\dots$,
and let $\GGxx$ be the network obtained from $\Gabg$
by short-circuiting each set $V_L$
of vertices.
For $\xi\in V_L$, we have by the Cauchy--Schwarz inequality
$L^2=S(\xi)^2\le n\innprod{\xi,\xi}$, and thus
by \eqref{WR1} and \eqref{eqres}, the conductance
\begin{equation}
\label{CR1}
C_{\xi-{\bf e}_i, \xi}
= e^{W(\xi)}
\le e^{-\frac{\gd}{2n}L^2+\frac{|\ga|}{2}L} \le Ce^{-cL^2}
\end{equation}
for some positive constants $c,C$.

For  $L\ge1$, there are $O(L^{n-1})$ vertices in $V_L$, and thus
$O(L^{n-1})$ edges between $V_{L-1}$ and $V_L$.
When short-circuiting each $V_L$, we can regard each $V_L$ as a single
vertex in $\GGxx$; the edges between $V_{L-1}$ and $V_L$ then become parallel,
and can be combined inte a single edge between $V_{L-1}$ and $V_L$.
The conductance, $C_L$ say, of this edge is obtained by summing the
conductances of all edges between $V_{L-1}$ and $V_L$ (since they are in
parallel), and thus
\begin{equation}
  C_L = O\bigpar{L^{n-1}}\cdot O\bigpar{e^{-cL^2}} = O(1).
\end{equation}
Consequently, the resistances $C_L^{-1}$ are bounded below, and since $\GGxx$
is just a path with these resistances in series,
\begin{equation}
  \Reff(\GGxx)=\sum_{L=1}^\infty C_L^{-1}=\infty.
\end{equation}
As explained in \refSS{SSelectric}, this implies that $\Reff(\Gabg)=\infty$ and
that the Markov chain $\xi(t)$ is recurrent.
\end{proof}

\begin{lemma}\label{LR1b}
  If\/  $\ga<0$, $\ga+\gb\gl_1<0$ and $-\infty\le\gb\le0$,
 then the CTMC\/ $\xi(t)$ is recurrent.
\end{lemma}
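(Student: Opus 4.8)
The plan is to reduce the case $\gb\le 0$ to the already-proved case $\gb=0$ (\refL{LR1} with $\gb=0$, which after \refR{Rempty} covers $\ga<0$, and where $\gl_1$ plays no role since the relevant bound only used $S(\xi)$ and $\innprod{\xi,\xi}$) by comparing conductances edge-by-edge via Rayleigh's monotonicity law. First I would observe that when $\gb\le 0$ the quadratic form satisfies $\innprod{(\ga E+\gb A)\xi,\xi}=\ga\innprod{\xi,\xi}+\gb\innprod{A\xi,\xi}\le \ga\innprod{\xi,\xi}$, because $\gb\le 0$ and $\innprod{A\xi,\xi}=2\sumij\xi_i\xi_j\ge 0$ for $\xi\in\ZZn$ (all entries of $A$ and $\xi$ are non-negative). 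In the hard-core case $\gb=-\infty$ this holds trivially on the state space $\GO$, where $\innprod{A\xi,\xi}=0$ and only edges inside $\GO$ are present. Hence, by \eqref{W}, $W(\xi)\le \frac{\ga}{2}\innprod{\xi,\xi}-\frac{\ga}{2}S(\xi)=W_0(\xi)$, where $W_0$ is the weight function for the parameter pair $(\ga,0)$.

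Next, by \eqref{eqres} the conductance of each edge $\{\xi-\bfe_i,\xi\}$ in $\Gabg$ equals $e^{W(\xi)}\le e^{W_0(\xi)}$, the corresponding conductance in $\Gaog$ (the network for $(\ga,0)$). So $\Gabg$ is obtained from $\Gaog$ by lowering every conductance (and, in the hard-core case, additionally deleting the edges outside $\GO$ — which can only increase resistance as well). By the version of Rayleigh's monotonicity law for decreasing conductances (equivalently, increasing resistances), $\Reff(\Gabg)\ge\Reff(\Gaog)$. Since $\ga<0$, \refL{LR1} (applied with $\gb=0$, where the hypothesis $\ga+\gb\gl_1<0$ reads $\ga<0$, which holds) gives $\Reff(\Gaog)=\infty$; therefore $\Reff(\Gabg)=\infty$ and, as explained in \refSS{SSelectric}, $\xi(t)$ is recurrent.

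I do not expect a serious obstacle here; the only point needing a little care is the monotonicity statement in the form "smaller conductances $\Rightarrow$ larger effective resistance," which is the standard companion to the edge-deletion form already invoked in \refSS{SSelectric} (indeed deleting an edge is the limiting case of sending its conductance to $0$), and the bookkeeping in the hard-core case $\gb=-\infty$, where one should note that $\GO$ is exactly the set where $W$ (for any finite $\gb$) agrees with the relevant finite expression and that restricting the network to $\GO$ is covered by edge deletion. One could alternatively repeat the short-circuiting argument of \refL{LR1} verbatim — the bound $W(\xi)\le W_0(\xi)\le -\frac{|\ga|}{2n}L^2+\frac{|\ga|}{2}L$ on $V_L$ holds identically — but the comparison-with-$\gb=0$ route is shorter and also yields the monotonicity remark \refR{Rmono} for free.
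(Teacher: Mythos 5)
Your proposal is correct and matches the paper's own proof essentially verbatim: both compare $W(\xi)\le W_0(\xi)$ edge-by-edge, invoke Rayleigh's monotonicity law to get $\Reff(\Gabg)\ge\Reff(\Gaog)=\infty$ from \refL{LR1} applied with $\gb=0$, and conclude recurrence. Your extra remarks on the hard-core case $\gb=-\infty$ (edge deletion as the limit of vanishing conductance on $\ZZn\setminus\GO$) are a harmless elaboration of a point the paper leaves implicit.
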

  
\begin{proof}
We use monotonicity. If we replace $\gb$ by 0, then \refL{LR1} applies;
consequently, $\Reff(\Gaog)=\infty$. On the other hand, if $W_0(\xi)$ is
defined by \eqref{W} with $\gb$ replaced by 0, then $W(\xi)\le W_0(\xi)$
(since $\gb\le0$), and thus by \eqref{eqres}, each edge in $\Gabg$ has at most
the same conductivity as in $\Gaog$. Equivalently, each resistance is at
least as large in $\Gabg$ as in $\Gaog$, and thus by
Rayleigh's monotonicity law,
$\Reff(\Gabg)\ge\Reff(\Gaog)=\infty$.
Hence, the Markov chain is recurrent.
\end{proof}

\begin{lemma}\label{LR0}
  If\/  $\ga=0$, $\gb=0$ and $n\le2$,
 then the CTMC\/ $\xi(t)$ is recurrent.
\end{lemma}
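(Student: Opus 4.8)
Proof plan for Lemma~\ref{LR0} ($\ga=0$, $\gb=0$, $n\le2$; the CTMC $\xi(t)$ is recurrent).

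The plan is to invoke exactly the electric-network argument already rehearsed in \refE{Esimple} and \refR{Rsimple}. When $\ga=\gb=0$, every rate in \eqref{q} equals~$1$, so $\xi(t)$ is continuous-time simple random walk on $\ZZn$, and recurrence of the CTMC is equivalent to recurrence of its embedded discrete-time SRW, which in turn (by the reversibility discussion in \refSS{SSelectric}) is equivalent to $\Reff(\Gamma_{0,0,G})=\infty$, where all conductances equal~$1$. So it suffices to show $\Reff(\Z_+^n)=\infty$ for $n\in\{1,2\}$.

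First I would handle $n=1$ trivially: $\Z_+$ with unit resistances in series has $\Reff=\sum_{L=1}^\infty 1=\infty$. For $n=2$, I would repeat the short-circuiting computation from \refE{Esimple}: set $V_L:=\{x\in\Z_+^2:S(x)=L\}$ as in \eqref{VL}, let $\GGxx$ be the network obtained by collapsing each $V_L$ to a single vertex, and observe that the number of edges between $V_{L-1}$ and $V_L$ is $\asymp L^{n-1}=\asymp L^0=O(1)$ (in fact there are exactly $L$ vertices in $V_L$ and $2L$ or so connecting edges — a bounded-from-below, $O(1)$ quantity). These edges are in parallel, so the conductance $C_L$ between $V_{L-1}$ and $V_L$ is $\Theta(1)$, whence the resistances $C_L^{-1}$ are bounded below by a positive constant. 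Since $\GGxx$ is a path with these resistances in series,
\begin{equation}
\Reff(\GGxx)=\sum_{L=1}^\infty C_L^{-1}=\infty.
\end{equation}
By the Nash--Williams / short-circuiting principle recalled in \refSS{SSelectric}, $\Reff(\Z_+^2)\ge\Reff(\GGxx)=\infty$, so the SRW, hence the CTMC $\xi(t)$, is recurrent.

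There is essentially no obstacle here: this lemma is a direct citation of \refE{Esimple} (the case $n=2$) together with the trivial case $n=1$. The only thing to be slightly careful about is the routine bookkeeping that $\lvert V_L\rvert$ and the number of edges across each cut are $\Theta(L^{n-1})$, which for $n\le2$ keeps $C_L$ bounded away from $0$ and $\infty$; this is exactly the computation already carried out in \refE{Esimple}, so the proof can simply refer back to it. One could also give an alternative, reversibility-free argument by noting that SRW on $\Z_+^n$ for $n\le 2$ is recurrent by comparison with (the well-known recurrence of) SRW on $\Z^n$, reflecting at the boundary; but the electric-network proof is the shortest and matches the paper's chosen method.
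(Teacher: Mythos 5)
Your proposal is correct and is essentially identical to the paper's proof, which simply cites Remark~\ref{Rsimple} and Example~\ref{Esimple} (the Nash--Williams short-circuiting argument over the level sets $V_L$) for recurrence of simple random walk on $\Z_+^n$ with $n\le2$. You have merely written out the details that the paper leaves to the cited example.
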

  
\begin{proof}
  See \refR{Rsimple} and \refE{Esimple}.
\end{proof}

\begin{lemma}\label{LR2}
  If\/  $\ga=0$, $-\infty\le\gb<0$ and $\kk\le2$,
 then the CTMC\/ $\xi(t)$ is recurrent.
\end{lemma}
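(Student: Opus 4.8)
The plan is to show that the effective resistance $\Reff(\Gabg)$ is infinite by short-circuiting, just as in the proofs of \refL{LR1} and \refE{Esimple}, but this time combining this with an upper bound on the conductances that exploits $\kk(G)\le2$. The key structural fact is the following: if $\kk(G)\le2$, then any configuration $\xi$ whose support $\{i:\xi_i>0\}$ has size $\ge3$ must contain at least one neighbouring pair $i\sim j$ with $\xi_i,\xi_j\ge1$. More precisely, I would argue that the number of edges $\sumij\xi_i\xi_j$ contained in the support of $\xi$ grows at least linearly once $S(\xi)$ is large, because the support can be covered by at most two independent sets is false in general — instead I'd use that a graph with independence number $\le 2$ has complement which is triangle-free, hence by a Turán-type count the support of any large configuration contains $\Omega(|\mathrm{supp}\,\xi|)$ edges; more simply, among any three coordinates in the support at least one pair is adjacent. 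Since $\gb<0$, each such adjacent pair contributes $\gb\xi_i\xi_j<0$ to $W(\xi)$, and the trick is to get enough negativity to beat the $\frac{|\ga|}{2}S(\xi)$ term — but here $\ga=0$, so in fact we just need $W(\xi)\to-\infty$ fast enough.

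Concretely, I would fix $L$ and let $V_L=\{\xi: S(\xi)=L\}$ as in \eqref{VL}, short-circuit each $V_L$ to obtain a network $\GGxx$ which is a path with the sets $V_L$ as vertices, and estimate the conductance $C_L$ of the (combined parallel) edge between $V_{L-1}$ and $V_L$, which is $\sum_{\xi\in V_L}e^{W(\xi)}$. For $\xi\in V_L$, writing $m=|\mathrm{supp}\,\xi|$, the configuration has at most $\binom{m}{2}$ coordinates but, crucially, since $\kk(G)\le2$ the support induces a subgraph of $G$ with at least $\binom{m}{2}-\lfloor m^2/4\rfloor\ge$ (some positive multiple of $m^2$ once $m\ge3$, by Turán) — actually the clean bound is: a graph on $m$ vertices with independence number $\le 2$ has at least $\binom{m}{2}-\lfloor m^2/4\rfloor$ edges, which is $\ge \binom{\lceil m/2\rceil}{2}+\binom{\lfloor m/2\rfloor}{2}$, hence $\Omega(m^2)$. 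The hard part is then to convert "$\Omega(m^2)$ edges in the support" into a lower bound on $\sum_{i\sim j,\, i,j\in\mathrm{supp}} \xi_i\xi_j$; the cheap way is $\xi_i\xi_j\ge1$ on each support edge, giving $W(\xi)\le \gb\cdot\Omega(m^2)$ when $m\ge3$. Combined with the trivial bound $W(\xi)\le 0$ when $m\le2$ (since $\ga=0$, $\gb<0$), we get $e^{W(\xi)}\le 1$ always and $e^{W(\xi)}\le e^{-c m^2}$ when $m\ge 3$.

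Now I count: the number of $\xi\in V_L$ with support of size exactly $m$ is at most $\binom{n}{m}\binom{L-1}{m-1}=O(L^{m-1})$. The contribution to $C_L$ from $m\le2$ is $O(L)$ (this is the dominant term!), and the contribution from each fixed $m\ge3$ is $O(L^{m-1}e^{-cm^2})=O(L^{m-1})$, but summed over the finitely many $m\in\{3,\dots,n\}$ this is $O(L^{n-1})$ — which is too big. So the support-edge bound $\xi_i\xi_j\ge1$ is not enough; I need to use that the $\xi_i$ on the support are genuinely large on average. The fix: for $\xi\in V_L$ with $|\mathrm{supp}\,\xi|=m$, by convexity $\sum_{i\sim j,\,i,j\in\mathrm{supp}}\xi_i\xi_j\ge (\text{number of support edges})\cdot(\text{something})$; better, split $V_L$ by also fixing which coordinates are "large" ($\ge L/(2n)$, say). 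At least one support coordinate, call it $\xi_{i_0}$, satisfies $\xi_{i_0}\ge L/n$. If $i_0$ has a neighbour $j_0$ in the support, then $\xi_{i_0}\xi_{j_0}\ge L/n$, so $W(\xi)\le \gb L/n$ and $e^{W(\xi)}\le e^{-cL}$; the number of such $\xi$ is at most $|V_L|=O(L^{n-1})$, contributing $O(L^{n-1}e^{-cL})=O(1)$. If $i_0$ has no neighbour in the support, then — and this is where $\kk\le2$ enters decisively — the support minus $\{i_0\}$ must be an independent set (else it contains an edge $\{j,k\}$, and then $\{i_0,j,k\}$... wait, need $i_0\not\sim j$ and $i_0\not\sim k$, which holds; but $\{i_0\}\cup\{j\}$ or the pair $j,k$ — we'd need a 3-element independent set: $\{i_0,j,k\}$ is independent only if $j\not\sim k$). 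Let me instead say: since $\kk\le 2$, the support cannot be an independent set once $|\mathrm{supp}\,\xi|\ge 3$, so there is always some support edge $\{i,j\}$; among all support coordinates order them $\xi_{(1)}\ge\xi_{(2)}\ge\cdots$, and the support edge $\{i,j\}$ has $\xi_i\xi_j\ge \xi_{(m)}^2$ — still too weak. The genuinely correct route, and the main obstacle, is to show $\sum_{i\sim j,\,\mathrm{supp}}\xi_i\xi_j\ge c\,L^2/n^2$ whenever $|\mathrm{supp}\,\xi|\ge 3$: partition the support into two independent sets is impossible, but we can say that the *complement* of the support-graph is triangle-free (as $\kk\le2$), so by Mantel it has $\le m^2/4$ edges, hence the support-graph has $\ge m^2/4 - m/2$ edges, i.e.\ a constant fraction of all pairs are adjacent; then $\sum_{i\sim j,\,\mathrm{supp}}\xi_i\xi_j\ge \frac14\bigpar{\sum_{i,j\,\mathrm{supp}}\xi_i\xi_j - \sum_i\xi_i^2}\ge \frac14\bigpar{(\sum\xi_i)^2 - 2\sum_i\xi_i^2}$... this can be negative. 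So I would instead prove the inequality $\sum_{\{i,j\}\subseteq\mathrm{supp},\,i\sim j}\xi_i\xi_j \ge c_n L^2$ directly for the finitely many graphs $G$ on $n\le$ (whatever) vertices with $\kk(G)\le2$ and all relevant supports of size $\ge3$, by a compactness/Lagrange-multiplier argument on the simplex $\{\sum\xi_i=1,\xi_i\ge0\}$ — the minimum of the quadratic form over configurations with support of size $\ge3$ is strictly positive since $\kk\le2$ forces a positive off-diagonal term. Granting this, $e^{W(\xi)}\le e^{\gb c_n L^2}=e^{-cL^2}$ for all $\xi\in V_L$ with $|\mathrm{supp}|\ge3$, so $C_L \le O(L) + O(L^{n-1}e^{-cL^2}) = O(L)$, hence $\Reff(\GGxx)=\sum_L C_L^{-1}\ge \sum_L c/L=\infty$, and by Rayleigh's monotonicity $\Reff(\Gabg)\ge\Reff(\GGxx)=\infty$, so $\xi(t)$ is recurrent. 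The same argument covers $\gb=-\infty$, where the network lives on $\GO$ and $W\equiv 0$ there, so all support edges are simply deleted and we are back to a sub-network of $\Z_+^2$.
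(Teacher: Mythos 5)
Your overall skeleton is exactly the paper's: short-circuit the level sets $V_L$ of \eqref{VL}, show that the combined conductance $C_L=\sum_{\xi\in V_L}e^{W(\xi)}$ between $V_{L-1}$ and $V_L$ is $O(L)$, and conclude $\Reff(\GGxx)=\sum_L C_L^{-1}=\infty$. But the step on which your final estimate rests --- the inequality $\sum_{\{i,j\}\subseteq\mathrm{supp}\,\xi,\ i\sim j}\xi_i\xi_j\ge c_nL^2$ for every $\xi\in V_L$ with support of size at least $3$ --- is false, and the compactness/Lagrange argument you invoke cannot rescue it: the set of points on the simplex with support of size $\ge3$ is not closed, its closure contains points of support size $\le2$ where the form may vanish, so the infimum is $0$ and is not attained. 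Concretely, take any $G$ with $\kk(G)\le2$ containing adjacent vertices $2\sim3$ and a vertex $1$ adjacent to neither (e.g.\ the $5$-cycle $\sfC_5$, or $\sfK_2$ plus an isolated vertex), and $\xi=(L-2,1,1,0,\dots,0)$ suitably placed: then $\sumij\xi_i\xi_j=1$, so $e^{W(\xi)}=e^{\gb}$ is a constant independent of $L$, not $e^{-cL^2}$ and not even $e^{-cL}$. Hence your bound $C_L\le O(L)+O\bigpar{L^{n-1}e^{-cL^2}}$ does not follow, and the proof does not close as written.

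The conclusion $C_L=O(L)$ is nevertheless true, and the paper obtains it by a finer stratification that you came within a hair of: order the coordinates $x_{(1)}\ge x_{(2)}\ge\dots\ge x_{(n)}$ and set $u:=x_{(3)}$. Since $\kk\le2$, two of the three vertices carrying $x_{(1)},x_{(2)},x_{(3)}$ are adjacent, whence $W(\xi)\le -bu^2$ with $b=-\gb$ --- you introduced this ordering but then extracted only the useless bound $\xi_{(m)}^2$ from the \emph{smallest} support coordinate instead of $x_{(3)}^2$ from the third largest. The counting then finishes the job: the number of $\xi\in V_L$ with $x_{(3)}=u$ is at most $n!\,(u+1)^{n-3}L$ (at most $u+1$ choices for each of $x_{(4)},\dots,x_{(n)}$, at most $L$ for $x_{(2)}$, and $x_{(1)}$ is then determined), so $C_L\le n!\,L\sum_{u}(u+1)^{n-3}e^{-bu^2}=O(L)$. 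Note that the configurations with bounded $u$ (which include your troublesome $(L-2,1,1,0,\dots,0)$) are only $O(L)$ in number, which is why the total still comes out linear. Your treatment of the hard-core case, your reduction of recurrence to the claim $C_L=O(L)$, and the final Nash--Williams/Rayleigh step are all correct; only the conductance estimate needs to be replaced by the argument above.
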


\begin{proof}
We assume that $n\ge3$; the case $n\le2$ follows by a simpler version of the
same argument (taking $u=0$ below), or by \refL{LR0} and 
Rayleigh's monotonicity law as in the proof of \refL{LR1b}.

The assumption $\kappa \leq 2$ implies 
that amongst any {\em three} vertices of the graph there are at least two
which are connected by an edge.

Let
$b:=-\beta>0$.
Then, since $\ga=0$, \eqref{W} yields
\begin{equation}
W(\bx)=-\frac{b}{2}\innprod{A\bx,\bx}
=-b\sumij\bx_i\bx_j,\qquad \bx\in\Z_{+}^n.
\end{equation}
Let again $V_L$ be defined  by \eqref{VL},
short-circuit all the vertices within each $V_L$, and denote the resulting
network by $\GGxx$. We can regard each $V_L$ as a vertex of $\GGxx$.

Fix $L\in\Z_+$ and  consider $\bx=(x_1,\dots,x_n)\in V_L$.
Let us order the components of $\bx$ in decreasing order: $x_{(1)}\ge x_{(2)} \ge x_{(3)}\ge \dots \ge x_{(n)}\ge 0$. 
Denote $x_{(3)}=u$; 
then, by construction, $u\in\{0,1,\dots,\lfloor{L/3}\rfloor\}$. 
Among the three vertices corresponding to $x_{(1)},x_{(2)},x_{(3)}$ at least two are connected, so that we can bound
\begin{equation}
  W(\bx)=-b\sumij x_i x_j\leq -bu^2.  
\end{equation}
Hence, by \eqref{eqres}, 
the conductance of each of the resistors coming to $\bx$ from $V_{L-1}$ is bounded above by $e^{-b u^2}$.
Next, the number of such $\bx\in V_L$ with $x_{(3)}=u$
is bounded by  
$n!\, (u+1)^{n-3}L$, 
as there are at most $u+1$ possibilities for each of $x_{(4)},x_{(5)},\dots,x_{(n)}$, at most $L$ possibilities for~$x_{(2)}$ and then
 $x_{(1)}=L-\sum_{i\ge 2}^n x_{(i)}$ is determined, and there are at most
$n!$ different orderings of $x_i$ for each $x_{(1)},\dots,x_{(n)}$.

All these resistors are {\em in parallel}, so 
we sum their conductance to get an effective conductance between $V_{L-1}$ and $V_L$, which is thus bounded above by
\begin{equation}
 n!\,L \sum_{u=1}^L (u+1)^{n-3}  e^{-b u^2}
\le  C(n, b)L,  
\end{equation}
for some $C(n,b)<\infty$.
%where $C(n, b)=n!\int_{0}^{\infty} u^{n-3}  e^{-b u^2}\rmd u<\infty$, 
(Thus, the conductance between $V_{L-1}$ and $V_L$ is of the same order as
in the case $\Z_+^2$ in \refE{Esimple}.)
Hence, the effective resistance $R_L$ between $V_{L-1}$ and $V_L$ is bounded 
below by $cL^{-1}$, and thus
\begin{equation}
  \Reff(\GGxx)=\sumL R_L \ge c\sumL \frac{1}L=\infty.
\end{equation}
Finally, $\Reff(\Gabg)\ge\Reff(\GGxx)=\infty$,
and the chain is therefore recurrent.
\end{proof}

\begin{lemma}\label{LRc}
  If\/  $\ga=0$, $\gb>0$, $e(G)=0$ and $n\le2$,
 then the CTMC\/ $\xi(t)$ is recurrent.
\end{lemma}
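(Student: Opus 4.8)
The plan is to reduce Lemma~\ref{LRc} to the simplest possible case already handled, namely the two-dimensional simple random walk, by exploiting \refR{Rempty}. Indeed, when $e(G)=0$, the graph has no edges, so $\beta$ never enters the rates \eqref{rate}; formally, in \eqref{W} the sum $\sumij \xi_i\xi_j$ is empty, so $W(\xi)=\frac{\alpha}{2}\sum_i\xi_i(\xi_i-1)=0$ since $\alpha=0$. Hence the parameter $\beta$ is irrelevant and may be replaced by $0$ without changing the network $\Gabg$ at all.

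First I would invoke \refR{Rempty} to note that, with $e(G)=0$ and $\ga=0$, the CTMC $\xi(t)$ coincides (as a network, via \eqref{eqres}) with the case $\ga=\gb=0$. Then I would simply apply \refL{LR0}, which states that for $\ga=\gb=0$ and $n\le 2$ the chain is recurrent. That finishes the proof in two lines. This mirrors exactly the structure used in \refL{LT0+}, which handled the transient counterpart $n\ge3$ by the same reduction to \refL{LT0}.

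The only thing to be careful about is making the reduction explicit rather than hand-waving: one should point out that all conductances \eqref{eqres} equal $e^{W(\xi)}=1$ regardless of $\beta$, so $\Gabg$ is literally the network of simple random walk on $\ZZn$, and then \refE{Esimple} (or \refL{LR0}) gives $\Reff(\Gabg)=\infty$. There is no real obstacle here — the lemma is essentially a bookkeeping step needed to complete the case analysis of \refT{Tmain}, and the substantive work was already done in \refE{Esimple} and \refL{LR0}.

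\begin{proof}
Since $e(G)=0$, the sum $\sumij\xi_i\xi_j$ in \eqref{W} is empty, and since
also $\ga=0$, we have $W(\xi)=0$ for every $\xi\in\ZZn$. Hence, by
\eqref{eqres}, every edge of $\Gabg$ has conductance $1$, so $\Gabg$ is
exactly the electric network of simple random walk on $\ZZn$, irrespective
of the value of $\gb$. As noted in \refR{Rempty}, we may thus assume
$\gb=0$, and the result follows from \refL{LR0} (equivalently, from
\refE{Esimple}, which shows $\Reff(\ZZn)=\infty$ for $n\le2$). Therefore the
Markov chain is recurrent.
\end{proof}
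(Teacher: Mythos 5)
Your proof is correct and follows exactly the paper's own route: the paper's proof of this lemma is the one-line reduction ``since $e(G)=0$, we may replace $\gb$ by $0$; the result then follows from \refL{LR0}.'' You have simply made the reduction explicit via $W(\xi)=0$ and \eqref{eqres}, which is a harmless elaboration of the same argument.
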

  
\begin{proof}
  Since $e(G)=0$, we may replace $\gb$ by 0; the result then follows from
  \refL{LR0}. 
\end{proof}

This completes the classification of transient and recurrent cases.
We proceed to distinguish between positive recurrent and null recurrent
cases;
we do this by analysing the invariant measure $\mu(\xi)=e^{W(\xi)}$,
and in particular its total mass
%Let $\Zabg$ be the total mass of the invariant measure $e^{W(\xi)}$, i.e.,
\begin{equation}
\label{finite}
Z_{\alpha, \beta, G}:=\sum\limits_{\xi\in \Z_{+}^n} e^{W(\xi)}=\sum\limits_{\xi\in \Z_{+}^n} 
e^{\frac{1}{2}\innprod{(\alpha E+\beta A)\xi, \xi}-\ga S(\xi)}\le\infty.
\end{equation}
Note that if $Z=Z_{\alpha, \beta, G}<\infty$, then the invariant measure
$\mu$ can
be  normalised to an invariant distribution 
$Z\qw e^{W(\xi)}$.
Furthermore, recall that an irreducible CTMC is positive recurrent if and
only if it has an 
  invariant distribution and is non-explosive. 

\begin{rem}\label{Rinvariant}
In general, a
 CTMC may have an invariant distribution and 
be explosive (and thus transient),
see e.g.\ \cite[Section 3.5]{Norris}; we will see that this does not
happen in our case. In other words, our CTMC is positive
recurrent exactly when $\Zabg<\infty$.
See also \refS{S:DTMC}.
\end{rem}

\begin{lemma}\label{LI}
  Let $-\infty<\ga<\infty$ and $-\infty\le\gb<\infty$.
Then $\Zabg<\infty$ if and only if
$\ga<0$ and $\ga+\gb\gl_1<0$.
\end{lemma}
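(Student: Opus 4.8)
The plan is to prove both directions by diagonalising the quadratic form $W$. Write $\xi=\sum_i a_i\bfv_i$ in the orthonormal eigenbasis of $A$; then by \eqref{W},
\begin{equation*}
W(\xi)=\tfrac12\sum_{i=1}^n(\ga+\gb\gl_i)a_i^2-\tfrac{\ga}{2}S(\xi).
\end{equation*}
The key observation is that $\Zabg=\sum_\xi e^{W(\xi)}$ is finite iff $W(\xi)\to-\infty$ fast enough as $\norm\xi\to\infty$, and since $\xi$ ranges over the lattice points of $\Z_+^n$, a clean criterion is available: one expects $\Zabg<\infty$ precisely when $W(\xi)\le -c\norm\xi$ outside a bounded set for some $c>0$ (which gives geometric-type decay summable over $\asymp L^{n-1}$ points on each sphere $\norm\xi\asymp L$), and $\Zabg=\infty$ whenever $W(\xi)$ fails to tend to $-\infty$ along some lattice direction, or tends to $-\infty$ too slowly.

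For the "if" direction, assume $\ga<0$ and $\ga+\gb\gl_1<0$. Set $\gd:=-(\ga+\gb\gl_1)>0$; then $\ga+\gb\gl_i\le-\gd$ for all $i$, so $\tfrac12\sum_i(\ga+\gb\gl_i)a_i^2\le-\tfrac{\gd}2\norm\xi^2$. Hence $W(\xi)\le-\tfrac{\gd}2\norm\xi^2+\tfrac{|\ga|}{2}S(\xi)\le-\tfrac{\gd}2\norm\xi^2+\tfrac{|\ga|\sqrt n}{2}\norm\xi$, which is $\le-c\norm\xi^2$ for large $\norm\xi$. This is exactly the bound \eqref{CR1}/\eqref{WR1} already used in the proof of \refL{LR1}, so
\begin{equation*}
\Zabg=\sum_{\xi\in\Z_+^n}e^{W(\xi)}\le C+\sum_{\xi:\norm\xi\ge R}e^{-c\norm\xi^2}<\infty,
\end{equation*}
the last sum being finite by comparison with $\sum_L L^{n-1}e^{-cL^2}$. (When $\gb=-\infty$ one restricts to $\GO$, on which $\innprod{A\xi,\xi}=0$ and the same bound with $\gb=0$ applies a fortiori, so this case is only easier.)

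For the "only if" direction, I would show the contrapositive: if the condition $\ga<0$ and $\ga+\gb\gl_1<0$ fails, then $\Zabg=\infty$. Split into cases mirroring the transience lemmas. If $\ga>0$: restrict the sum to $\xi=k\bfe_1$, giving $\sum_k e^{\ga k(k-1)/2}=\infty$. If $\ga=0$ and $\gb\ge0$ with $e(G)>0$: pick an edge $\{i,j\}$ and sum over $\xi=k(\bfe_i+\bfe_j)$ to get $\sum_k e^{\gb k^2}=\infty$ (if $\gb>0$), or if $\gb=0$ just note $\sum_{\xi}1=\infty$; if $\ga=0$, $\gb>0$, $e(G)=0$, or $\ga=\gb=0$, the measure is the counting measure on (a sublattice of) $\Z_+^n$, infinite. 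If $\ga=0$ and $\gb<0$ (including $-\infty$): take an independent vertex $i$ and sum over $\xi=k\bfe_i$; since $\innprod{A\bfe_i,\bfe_i}=0$ this gives $\sum_k 1=\infty$. The remaining and genuinely quantitative case is $\ga<0$ with $\ga+\gb\gl_1\ge0$: here I would use the Perron--Frobenius eigenvector $\bfv_1\in\R_+^n$ and the discretised path $\{y_k\}$ from the proof of \refL{LT2}, along which $a_{k,1}\asymp k$, $a_{k,i}=O(1)$ for $i\ne1$, $S(y_k)=k$, and hence by \eqref{WYk} $W(y_k)\ge\tfrac{|\ga|}2k+O(1)\to+\infty$ (if $\ga+\gb\gl_1>0$ it is even faster, $\ge c k^2$). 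Either way $\sum_k e^{W(y_k)}=\infty$, so $\Zabg\ge\sum_k e^{W(y_k)}=\infty$.

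The main obstacle is the case $\ga<0$, $\ga+\gb\gl_1\ge0$: unlike the other cases one cannot use a coordinate axis or a single edge, because $W$ restricted to any such ray still tends to $-\infty$ (the relevant eigenvalue combination there is negative); one really must travel along the Perron direction $\bfv_1$, and the only subtlety is that $\bfv_1$ is generally irrational, so the rounding argument of \refL{LT2} — controlling $\norm{y_k-t_k\bfv_1}=O(1)$ and hence the off-Perron coordinates $a_{k,i}=O(1)$ — is exactly what is needed and, conveniently, already in place. One should also note for completeness that the eigenvalue decomposition argument shows $W(y_k)\ge O(1)$ rather than $\to-\infty$ only because $(\ga+\gb\gl_1)a_{k,1}^2\ge0$; combined with the $-\tfrac\ga2 S(y_k)=\tfrac{|\ga|}2 k$ term this forces divergence. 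With these pieces the lemma follows, and together with \refLs{LT1}--\refL{LRc} and \refR{Rinvariant} it completes the positive/null recurrent split in \refT{Tmain}.
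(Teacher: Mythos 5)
Your ``only if'' direction is sound and follows the paper's own route: for $\ga\ge0$ the axis $\set{k\bfe_1}$ gives divergence, and for $\ga<0$, $\ga+\gb\gl_1\ge0$ you invoke the discretised Perron path $\set{y_k}$ of \refL{LT2} and the bound \eqref{WYk}, exactly as the paper does. The gap is in your ``if'' direction. You claim that $\ga+\gb\gl_1<0$ forces $\ga+\gb\gl_i\le-\gd$ for \emph{all} $i$, and hence that $\tfrac12\innprod{(\ga E+\gb A)\xi,\xi}\le-\tfrac{\gd}2\norm\xi^2$. That inequality $\gb\gl_i\le\gb\gl_1$ only follows from $\gl_i\le\gl_1$ when $\gb\ge0$; for $\gb<0$ it reverses. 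Concretely, take $G=\sfK_2$ (so $\gl_1=1$, $\gl_2=-1$), $\ga=-1$, $\gb=-10$: the hypotheses $\ga<0$ and $\ga+\gb\gl_1=-11<0$ hold, yet $\ga+\gb\gl_2=9>0$, so $\ga E+\gb A$ is not negative semi-definite and your displayed bound on the quadratic form is false as a statement about $\R^n$. The conclusion $W(\xi)\le-c\norm\xi^2$ on $\Z_+^n$ is still true, but only because $\xi$ has non-negative coordinates, which the eigenbasis decomposition discards. Your parenthetical handles $\gb=-\infty$ by restricting to $\GO$, but the finite negative $\gb$ case slips through untreated.

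The repair is the paper's own Case 4: for $-\infty\le\gb\le0$ and $\xi\in\Z_+^n$ one has $\gb\sumij\xi_i\xi_j\le0$ pointwise, hence $W(\xi)\le W_0(\xi)$ where $W_0$ is \eqref{W} with $\gb$ replaced by $0$, and therefore $\Zabg\le\Zaog$; the latter is finite by your eigenvalue argument (which is legitimate at $\gb=0$, where it reduces to $\ga<0$). In other words, the eigenvalue bound should be used only for $\gb\ge0$, and negative $\gb$ should be handled by monotonicity of $W$ on the orthant. With that split your proof coincides with the paper's.
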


\begin{proof}
We consider four different cases.

\pfcasex{$\ga\ge0$}
By \eqref{LT1W}, $e^{W(k\bfe_1)}=e^{\frac{\alpha}{2}k(k-1)}\ge1$, and thus
$\Zabg\ge\sumk e^{W(k\bfe_1)}=\infty$.

\pfcasex{$\ga<0$ and $\ga+\gb\gl_1\ge0$}
Let $y_k$ be as in \refL{LT2}. Then \eqref{WYk} applies and implies in
particular $W(y_k)\ge -C$ for some constant $C$. Hence, 
\begin{equation}
  \Zabg\ge\sumk e^{W(y_k)} \ge \sumk e^{-C}=\infty.
\end{equation}

\pfcasex{$\ga<0$, $\ga+\gb\gl_1<0$ and $\gb\ge0$}\label{cbbc}
The estimate \eqref{CR1} applies for every $\xi\in V_L$, and since the
number of vertices in $V_L$ is $O(L^{n-1})$ for $L\ge1$, we have
\begin{equation}
  \Zabg= 1+\sumL\sum_{\xi\in V_L}e^{W(\xi)}
\le  1+\sumL C_1 L^{n-1} e^{-cL^2}<\infty.
\end{equation}

\pfcasex{$\ga<0$, $\ga+\gb\gl_1<0$ and $-\infty\le\gb\le0$}
We use monotonicity as in the proof of \refL{LR1b}.
Let again $W_0(\xi)$ be given by \eqref{W} with $\gb$ replaced by 0.
Then, since $\gb\le0$, $W(\xi)\le W_0(\xi)$ and thus
$\Zabg\le \Zaog$. Furthermore, $\Zaog<\infty$ by Case
\ref{cbbc}.
Hence, $\Zabg<\infty$.
\end{proof}

\begin{lemma}\label{L+}
  \begin{romenumerate}
  \item \label{L+a}
If\/ $\ga<0$ and $\ga+\gb\gl_1<0$, then the CTMC $\xi(t)$ is positive recurrent.
\item \label{L+b}
If\/ $\ga=0$, $-\infty\le\gb<0$ and $\kk\le2$, then the CTMC $\xi(t)$ is
null recurrent.
\item \label{L+c}
If\/ $\ga=0$, $\gb=0$ and $n\le2$, then the CTMC $\xi(t)$ is null recurrent.
\item \label{L+d}
If\/ $\ga=0$, $\gb>0$, $e(G)=0$ and $n\le2$, then the CTMC $\xi(t)$ is null
recurrent. 
  \end{romenumerate}
\end{lemma}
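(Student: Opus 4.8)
The plan is to combine the recurrence/transience classification already established in the preceding lemmas with the mass computation of \refL{LI}, together with the general Markov-chain fact that an irreducible CTMC is positive recurrent if and only if it has an invariant distribution \emph{and} is non-explosive. For part \ref{L+a}: the hypothesis $\ga<0$, $\ga+\gb\gl_1<0$ means we are in the recurrent regime (by \refLs{LR1} and \ref{LR1b}), so the chain is recurrent; in particular it is non-explosive, since a recurrent CTMC cannot explode. On the other hand, \refL{LI} gives $\Zabg<\infty$, so $\mu(\xi)=e^{W(\xi)}$ normalises to an invariant distribution. A recurrent CTMC with a finite invariant measure is positive recurrent. (Alternatively, an irreducible CTMC possessing an invariant distribution and being non-explosive is positive recurrent; either formulation closes the case.) This also justifies the claim in \refR{Rinvariant} that explosiveness does not occur here.

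For parts \ref{L+b}, \ref{L+c}, \ref{L+d}: in each of these three cases the relevant earlier lemma (\refL{LR2}, \refL{LR0}, \refL{LRc} respectively) already shows the chain is recurrent. So it remains only to rule out positive recurrence, and for a recurrent CTMC this is equivalent to showing the (essentially unique) invariant measure has infinite total mass. But in all three cases the parameters satisfy $\ga=0$, hence \emph{not} ($\ga<0$ and $\ga+\gb\gl_1<0$), so \refL{LI} gives $\Zabg=\infty$. Since a recurrent chain's invariant measure is unique up to a constant, $\mu(\xi)=e^{W(\xi)}$ is (a constant multiple of) that measure, and its mass is infinite; therefore the chain is null recurrent, not positive recurrent.

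The one subtlety worth spelling out is the logical step ``recurrent $+$ finite invariant mass $\Rightarrow$ positive recurrent'' and its converse for the null cases. The cleanest route is: for an irreducible recurrent CTMC there is an invariant measure unique up to scalar multiples; the chain is positive recurrent iff this measure is finite. Combined with the explicit $\mu=e^{W}$ and \refL{LI}, all four statements follow immediately. The only real work has already been done — in the recurrence lemmas and in \refL{LI} — so this proof is essentially a bookkeeping argument assembling those pieces; I do not anticipate any genuine obstacle, merely the need to cite the standard CTMC facts (e.g.\ from \cite{Norris}) about the relation between recurrence, invariant distributions, and explosiveness.

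\begin{proof}
\pfitem{\ref{L+a}}
By \refLs{LR1} and \ref{LR1b}, the CTMC $\xi(t)$ is recurrent; in
particular it is non-explosive. By \refL{LI}, $\Zabg<\infty$, so the
invariant measure $\mu(\xi)=e^{W(\xi)}$ is finite and normalises to an
invariant distribution. An irreducible recurrent CTMC with a finite
invariant measure is positive recurrent; hence $\xi(t)$ is positive
recurrent.

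\pfitem{\ref{L+b}} By \refL{LR2}, $\xi(t)$ is recurrent, so it has an
invariant measure that is unique up to a multiplicative constant; thus
$\mu(\xi)=e^{W(\xi)}$ is such a measure. Here $\ga=0$, so the condition
``$\ga<0$ and $\ga+\gb\gl_1<0$'' fails, and \refL{LI} gives
$\Zabg=\infty$. Hence the invariant measure is not finite, so $\xi(t)$ is
not positive recurrent; being recurrent, it is null recurrent.

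\pfitem{\ref{L+c}} By \refL{LR0}, $\xi(t)$ is recurrent. Again $\ga=0$, so
\refL{LI} gives $\Zabg=\infty$, and the same argument as in \ref{L+b}
shows that $\xi(t)$ is null recurrent.

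\pfitem{\ref{L+d}} By \refL{LRc}, $\xi(t)$ is recurrent. Once more
$\ga=0$, so \refL{LI} gives $\Zabg=\infty$, and the argument of \ref{L+b}
shows that $\xi(t)$ is null recurrent.
\end{proof}
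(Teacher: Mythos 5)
Your proof is correct and follows essentially the same route as the paper's: establish recurrence from the earlier lemmas, then use \refL{LI} together with the fact that an irreducible recurrent CTMC is positive recurrent if and only if its (essentially unique) invariant measure has finite total mass. The paper's own proof is exactly this bookkeeping argument, stated once for all four cases.
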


\begin{proof}
In all four cases, the Markov chain is recurrent, 
by \refLs{LR1}, \ref{LR1b}, \ref{LR0}, \ref{LR2}, \ref{LRc}.
Hence the chain is non-explosive, and the invariant measure is unique up to
a constant factor; furthermore, the chain is positive recurrent if and only
if this measure has finite total mass so that there exists an invariant
distribution. In other words, in these recurrent cases, the chain is
positive recurrent if and only if $\Zabg<\infty$.
By \refL{LI}, this holds in case
  \ref{L+a}, but not in \ref{L+b}--\ref{L+d}.
\end{proof}

\begin{proof}[Proof of \refT{Tmain}]
The theorem follows by collecting \refLs{LT1}--\ref{LT4} and \ref{L+}.
\end{proof}

\section{The corresponding discrete time Markov chain}
\label{S:DTMC}

In this section we consider the discrete time Markov chain (DTMC) 
$\zeta(t)\in \Z_{+}^n$ that corresponds to 
the CTMC $\xi(t)$, i.e.\ the corresponding embedded DTMC. 
Note that  we use $t$ to denote both the continuous and the discrete time,
although the two chains are related by a random change of time.

Recall that the transition  probabilities of DTMC  $\zeta(t)$ are
proportional to corresponding  transition rates of CTMC $\xi(t)$. 
Thus, if the rates of $\xi(t)$ are $q_{\xi,\eta}$, given by \eqref{q},
and $C_{\xi,\eta}=C_{\eta,\xi}$ are the conductances given by \eqref{eqres}
(with $C_{\xi,\eta}=0$ if $\norm{\xi-\eta}\neq1$),
and further $q_\xi:=\sum_{\eta\sim\xi}q_{\xi,\eta}$
and $C_\xi:=\sum_{\eta\sim\xi}C_{\xi,\eta}$,
then
the transition  probabilities %$p_{\xi,\eta}$ 
of $\zeta(t)$ are
\begin{equation}\label{p}
  p_{\xi,\eta}:=\frac{q_{\xi,\eta}}{q_\xi}=\frac{C_{\xi,\eta}}{C_\xi}.
\end{equation}

It is obvious that a CTMC is irreducible if and only if the corresponding
DTMC is, and it is easy to see that the same holds for reversibility.
Similarly, since a CTMC and the corresponding DTMC pass through the same
states (with a random change of time parameter), if one is recurrent [or
transient], then so is the other. However, 
in general, 
since the two chains pass through the states at different speeds, 
one of the chains may be positive recurrent and the other null recurrent.
(Recall that many different CTMC have the same embedded DTMC, and that some
of them may be positive recurrent and others not.)
In our case, there is no such complication.

\begin{theorem}
  \label{TD}
The conclusions in \refT{Tmain} hold also for the DTMC $\zeta(t)$.
\end{theorem}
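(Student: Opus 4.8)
The plan is to show that the only thing that could possibly differ between the CTMC $\xi(t)$ and the embedded DTMC $\zeta(t)$ is the distinction between positive and null recurrence, and then to rule out any such discrepancy. First I would observe that $\xi(t)$ and $\zeta(t)$ visit exactly the same sequence of states (the jump chain is literally $\zeta$), so transience of one is equivalent to transience of the other, and likewise recurrence of one is equivalent to recurrence of the other. This immediately transfers parts \ref{Tmaint} and the recurrence assertions of parts \ref{Tmain+} and \ref{Tmain0} of \refT{Tmain} verbatim to $\zeta(t)$: in every transient case $\xi(t)$ is transient, hence so is $\zeta(t)$; in every recurrent case $\xi(t)$ is recurrent, hence so is $\zeta(t)$. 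It remains only to check, in the four recurrent cases, whether $\zeta(t)$ is positive recurrent exactly when $\xi(t)$ is.

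Next I would identify the invariant measure of $\zeta(t)$. Since the DTMC is reversible (as noted in the excerpt, reversibility passes between a CTMC and its jump chain), and using \eqref{p}, the detailed balance equation $p_{\xi,\eta}\,\pi(\xi)=p_{\eta,\xi}\,\pi(\eta)$ is solved by $\pi(\xi):=C_\xi=\sum_{\eta\sim\xi}C_{\xi,\eta}$; indeed $C_{\xi,\eta}\pi(\xi)/C_\xi = C_{\xi,\eta}=C_{\eta,\xi}=C_{\eta,\xi}\pi(\eta)/C_\eta$. Alternatively, and more transparently, $\pi(\xi)=q_\xi\,\mu(\xi)$ where $\mu(\xi)=e^{W(\xi)}$ is the reversible measure of the CTMC: this is the standard relation between the invariant measure of a CTMC and that of its jump chain. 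So $\zeta(t)$ is positive recurrent if and only if $\sum_{\xi\in\ZZn} q_\xi\,e^{W(\xi)}<\infty$, whereas $\xi(t)$ is positive recurrent if and only if $\Zabg=\sum_{\xi} e^{W(\xi)}<\infty$ (by \refL{LI} and \refR{Rinvariant}).

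The remaining work is to show these two summability conditions coincide in the recurrent regime. The easy direction is that $q_\xi\ge1$ whenever $\xi\ne\bfo$ (there is always at least the down-move, except at the origin, but in fact at the origin $q_{\bfo}=n\ge1$ too, since every component can jump up), so $\sum_\xi q_\xi e^{W(\xi)}\ge \sum_\xi e^{W(\xi)}=\Zabg$; hence positive recurrence of $\zeta$ implies positive recurrence of $\xi$. For the converse I would use that in each of the four recurrent cases the rates are bounded in a way that makes $q_\xi$ grow at most polynomially against the exponential decay of $e^{W(\xi)}$. Concretely, $q_\xi = n + \sum_{i=1}^n e^{\alpha\xi_i+\beta(A\xi)_i}$; in cases \ref{Tmain0a}--\ref{Tmain0c} (where $\alpha=0$ and either $\beta\le0$ or $e(G)=0$) every exponential rate is $\le1$, so $q_\xi\le 2n$ is bounded and $\sum_\xi q_\xi e^{W(\xi)}\le 2n\,\Zabg$; that case is then immediate. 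In case \ref{Tmain+}, $\alpha<0$ and $\alpha+\beta\lambda_1<0$; here I would bound $q_\xi\le n + n\max_i e^{\alpha\xi_i+\beta(A\xi)_i}\le n+n\,e^{W(\xi+\bfe_{i^*})-W(\xi)}$ by \eqref{balance}, or more directly note that when $\beta\le0$ the rates are again $\le1$, and when $\beta>0$ the estimate \eqref{CR1}-type bound $W(\xi)\le -cS(\xi)^2+C S(\xi)$ from the proof of \refL{LR1} shows $e^{W(\xi)}$ decays like $e^{-c\norm{\xi}^2}$, which dominates any polynomial or even $\max_i e^{\alpha\xi_i+\beta\sum_{j\sim i}\xi_j}$ growth of $q_\xi$ (the exponent in each rate is linear in $\xi$). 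So $\sum_\xi q_\xi e^{W(\xi)}<\infty$ whenever $\Zabg<\infty$, which is exactly the recurrent-and-positive subcase. The main obstacle is simply bookkeeping this last bound cleanly across the sub-subcases $\beta\gtrless0$; once one sees that the rates are exponentials of a function linear in $\xi$ while $e^{W(\xi)}$ in the positive-recurrent regime decays like the exponential of a negative-definite quadratic, the comparison is routine. With both implications established, $Z'_{\alpha,\beta,G}:=\sum_\xi q_\xi e^{W(\xi)}<\infty \iff \Zabg<\infty$, so the positive/null dichotomy is the same for $\zeta$ as for $\xi$, and \refT{Tmain} transfers in full.
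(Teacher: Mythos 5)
Your proposal is correct and follows essentially the same route as the paper: transfer recurrence/transience directly via the jump chain, identify the invariant measure of $\zeta(t)$ as $C_\xi=q_\xi e^{W(\xi)}$, and show its total mass is finite exactly when $\Zabg$ is. The paper shortens your final case analysis by summing $C_\xi$ over edges to get the identity $\hZabg=2\sum_\xi|\set{i:\xi_i>0}|e^{W(\xi)}$, whence $\Zabg-1\le\hZabg\le 2n\Zabg$ at once, but this is only a streamlining of the same comparison you carry out.
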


Before proving the theorem, we note that it follows from \eqref{p} that
the DTMC $\zeta(t)$ is reversible with an invariant measure
\begin{equation}\label{hmu}
  \hmu(\xi):=C_\xi. %q_\xi\mu(\xi)=C_\xi.
\end{equation}
We denote the total mass of this invariant measure by
\begin{equation}
\label{hfinite}
\hZ_{\alpha, \beta, G}:=\sum_{\xi\in\ZZn} C_\xi
=\sum_{\xi}\sum_{\eta:\,\eta\sim\xi}C_{\xi,\eta}
=2\sum_\xi\sum_{i:\,\xi_i>0} C_{\xi,\xi-\bfe_i}
=2\sum_\xi|\set{i:\xi_i>0}| e^{W(\xi)}.
\end{equation}
Consequently,
\begin{equation}\label{hZ-Z}
  \Zabg-1 \le \hZ_{\alpha, \beta, G} \le 2n\Zabg.
\end{equation}

\begin{lemma}
  \label{LID}
  Let $-\infty<\ga<\infty$ and $-\infty\le\gb<\infty$.
Then $\hZabg<\infty$ if and only if
$\ga<0$ and $\ga+\gb\gl_1<0$.
\end{lemma}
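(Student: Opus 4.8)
The plan is to reduce \refL{LID} directly to \refL{LI}, which has already settled the same finiteness question for the CTMC invariant measure $\mu$. The key point is the elementary two-sided comparison \eqref{hZ-Z}, so the first thing I would do is record its derivation: formula \eqref{hfinite} writes
$\hZabg = 2\sum_{\xi\in\ZZn}\abs{\set{i:\xi_i>0}}\,e^{W(\xi)}$,
and since the multiplicity $\abs{\set{i:\xi_i>0}}$ always lies in $\set{0,1,\dots,n}$, is equal to $0$ only at $\xi=\bfo$, and $W(\bfo)=0$, summing over $\xi$ gives at once
$\Zabg-1 \le 2(\Zabg-1) \le \hZabg \le 2n\,\Zabg$.
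In particular $\hZabg<\infty$ if and only if $\Zabg<\infty$.

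The second (and final) step is simply to invoke \refL{LI}: $\Zabg<\infty$ precisely when $\ga<0$ and $\ga+\gb\gl_1<0$. Chaining this equivalence with the one from \eqref{hZ-Z} yields exactly the statement of \refL{LID}. I would also note that this argument is uniform in $\gb$, including the hard-core case $\gb=-\infty$, since \refL{LI} already covers that range.

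There is no genuine obstacle here; the substantive work — the case split for finiteness of $\Zabg$, in particular the short-circuiting together with the Cauchy--Schwarz bound $L^2\le n\innprod{\xi,\xi}$ in the subcase $\gb\ge0$, and the monotonicity comparison with $W_0$ in the subcase $\gb\le0$ — has all been done in \refL{LI}, and the passage to $\hZabg$ costs only the harmless bounded factor $\abs{\set{i:\xi_i>0}}$. If one insisted on a self-contained proof of \refL{LID} one could instead repeat the four-case argument of \refL{LI} verbatim with this extra factor inserted into each estimate, but that would be strictly more work for no gain, so the comparison \eqref{hZ-Z} is the natural route and the one I would take.
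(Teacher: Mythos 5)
Your proposal is correct and is exactly the paper's argument: the paper proves \refL{LID} by the one-line reduction ``Immediate by \eqref{hZ-Z} and \refL{LI}'', with \eqref{hZ-Z} obtained from \eqref{hfinite} just as you describe. Your additional remark that the multiplicity $\abs{\set{i:\xi_i>0}}$ vanishes only at $\xi=\bfo$ (where $W(\bfo)=0$) correctly justifies the lower bound $\Zabg-1\le\hZabg$, so nothing is missing.
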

\begin{proof}
Immediate by \eqref{hZ-Z} and \refL{LI}.
\end{proof}

\begin{proof}[Proof of \refT{TD}]
As said above, $\zeta(t)$ is transient precisely when $\xi(t)$ is.

A DTMC is positive recurrent if and only if it has an invariant
  distribution, and then every invariant measure is a multiple of the
  stationary distribution. Hence, $\zeta(t)$ is positive recurrent if and
  only if the invariant measure $\hmu(\xi)$ has finite mass, i.e., if
  $\hZabg<\infty$. \refL{LID} shows that this holds precisely in case
  \ref{Tmain+} of \refT{Tmain}, i.e., when $\xi(t)$ is positive recurrent.
\end{proof}

\begin{rem}
  We can use the DTMC $\zeta(t)$ to 
give an alternative proof of \refL{L+}\ref{L+a} without
  \refLs{LR1}--\ref{LR1b}.
Assume $\ga<0$ and $\ga+\gb\gl_1<0$. Then, by  \refL{LID}, 
$\hZabg<\infty$. Hence, the DTMC $\zeta(t)$ has a stationary distribution
and is thus positive recurrent. (Recall that this implication
holds in general for a DTMC, but not for a CTMC, see \refR{Rinvariant}.)
Hence $\xi(t)$ is recurrent, and thus non-explosive.
Furthermore, \refL{LI} shows that also
$\Zabg<\infty$, and thus also $\xi(t)$ has a stationary distribution.
Since $\xi(t)$ is non-explosive, this implies that $\xi(t)$ is positive
recurrent.
\end{rem}

\section{Explosions}\label{S:ex}

It was shown in~\cite{Volk3} that 
in most of the transient cases in \refT{Tmain}, the CTMC $\xi(t)$ is explosive.
(Recall that a recurrent CTMC is non-explosive.)
We complement this by exhibiting in \refL{Lexpl00} 
one non-trivial transient case where $\xi(t)$ is
non-explosive.

Recall also the standard fact that if, as above, $q_\xi:=\sum_\eta q_{\xi,\eta}$
is the total rate of leaving $\xi$, and $\zeta(t)$ is the DTMC in
\refS{S:DTMC}, then $\xi(t)$ is explosive if and only if
$\sum_{t=1}^\infty q_{\zeta(t)}\qw<\infty$ with positive probability.
In particular, $\xi(t)$ is non-explosive when the rates $q_{\xi}$ are
bounded.

Combining these results, we obtain the following partial classification,
proved later in this section.
Let $\nu_i$ denote the degree of vertex $i\in G$, and note that
\begin{equation}
  \label{nu}
\min_i\nu_i \le \gl_1\le\max_i\nu_i.  
\end{equation}

\begin{theorem}\label{Texpl}
Let $-\infty<\ga<\infty$ and $-\infty\le\gb<\infty$, and 
consider the CTMC\/ $\xi(t)$.
  \begin{romenumerate}
  \item \label{Texpl0}
$\xi(t)$ is non-explosive in the following cases:
    \begin{enumerate}
     \item \label{Texpl00}
$\ga<0$ and $\ga+\gb\gl_1(G)\le0$,
%\item  \label{Texpl00}
% $\ga<0$ and $\ga+\gb\gl_1(G)=0$.
    \item \label{Texpl0b}
$\ga=0$ and $\gb\le 0$,
\item \label{Texpl0c}
$\ga=0$, $\gb>0$ and $e(G)=0$.
    \end{enumerate}

\item \label{Texpl+}
$\xi(t)$ explodes a.s.\ in the following cases:
\begin{enumerate}
\item\label{Texpl+a}  
$\ga>0$,
\item \label{Texpl+b}
$\ga=0$, $\gb>0$ and $e(G)>0$,
\item \label{Texpl+c}
$\ga<0$ and $\ga+\gb\min_i \nu_i>0$.
\end{enumerate}
  \end{romenumerate}
\end{theorem}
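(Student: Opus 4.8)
The plan is to prove \refT{Texpl} by treating the non-explosive cases and the explosive cases separately, using in each instance the criterion recalled just before the statement: $\xi(t)$ is non-explosive whenever the exit rates $q_\xi$ are bounded, and more generally $\xi(t)$ explodes iff $\sum_t q_{\zeta(t)}\qw<\infty$ with positive probability, where $\zeta$ is the embedded DTMC of \refS{S:DTMC}. For the non-explosive part \ref{Texpl0}, the key observation is that in all three listed subcases the individual up-rates $e^{\ga\xi_i+\gb(A\xi)_i}$ are uniformly bounded, so that $q_\xi=\sum_i e^{\ga\xi_i+\gb(A\xi)_i}+|\set{i:\xi_i>0}|\le n(1+M)$ for a suitable $M$. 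Indeed: in \ref{Texpl0b} ($\ga=0$, $\gb\le0$) each exponent $\gb(A\xi)_i\le0$; in \ref{Texpl0c} ($\ga=0$, $\gb>0$, $e(G)=0$) we have $(A\xi)_i=0$ identically; and in \ref{Texpl00} ($\ga<0$, $\ga+\gb\gl_1\le0$) one needs to show $\ga\xi_i+\gb(A\xi)_i$ is bounded above over all $\xi\in\ZZn$. For the hard-core case $\gb=-\infty$ the chain lives on $\GO$ and the up-rates are $e^{\ga\xi_i}\le1$, so this is immediate; for finite $\gb$ I would argue that $\sum_i\bigpar{\ga\xi_i+\gb(A\xi)_i}=\innprod{(\ga E+\gb A)\xi,\indic}$, but that only controls the sum, not the max, so instead I would bound each coordinate directly. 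Actually the cleanest route for \ref{Texpl00} when $\gb\ge0$: $\ga\xi_i+\gb(A\xi)_i\le\ga\xi_i+\gb\gl_1\xi_i$? — this is false coordinatewise; rather one uses that if $\ga+\gb\gl_1\le0$ and $\gb\ge0$ then $\ga\xi_i+\gb\sum_{j\sim i}\xi_j$ is maximized, for fixed $S(\xi)$, in a way that... this requires care, so I would instead invoke the quadratic form: since $W(\xi+\bfe_i)-W(\xi)=\ga\xi_i+\gb(A\xi)_i$ and, by the argument in \refL{LR1} (valid since $\ga+\gb\gl_1\le0$ and $\gb\ge0$), $W(\xi)\le\frac{|\ga|}{2}S(\xi)$, one does not immediately get boundedness of the increment. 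The honest statement: one shows $\ga\xi_i+\gb(A\xi)_i\le\ga\xi_i+\gb\gl_1\max_j\xi_j$; this still needs $\max_j\xi_j$ comparable to $\xi_i$, which fails. \textbf{This is the main obstacle} — establishing that exit rates stay bounded in case \ref{Texpl00} with $0\le\gb<\infty$. I expect the resolution is that one does \emph{not} prove boundedness of $q_\xi$; instead, for the critical case $\ga+\gb\gl_1=0$ one uses the refined exit-rate criterion along the trajectory together with the recurrence-type estimates of \refL{LR1}, or one restricts to the subnetwork argument; for the strictly-subcritical case $\ga+\gb\gl_1<0$ the chain is positive recurrent by \refT{Tmain} and hence non-explosive for free, so only $\ga+\gb\gl_1=0$ needs real work.

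Concretely, I would organize part \ref{Texpl0} as follows. First dispatch $\ga<0$, $\ga+\gb\gl_1<0$: positive recurrent by \refT{Tmain}\ref{Tmain+}, hence non-explosive. Then handle the critical line $\ga<0$, $\ga+\gb\gl_1=0$ (this is the substantive case, presumably the "non-trivial transient case" promised via \refL{Lexpl00}): here I expect the argument shows that along any trajectory the total exit rate grows at most linearly in the time parameter or in $S(\zeta(t))$, so that $\sum_t q_{\zeta(t)}\qw$ diverges a.s.; the input is that $W(y_k)=\Theta(k)$ (from \eqref{WYk} with equality in the critical case), which bounds how fast the chain can climb, hence how large rates it can reach in a given number of steps. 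For \ref{Texpl0b} and \ref{Texpl0c} the exit rates are genuinely bounded by $n+n$ and the conclusion is immediate; I would state these in one line each. I would also note the hard-core subcase $\gb=-\infty$ is covered by \ref{Texpl0b} since then up-rates are $\le1$ on $\GO$.

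For part \ref{Texpl+}, I would proceed by exhibiting explosion along an explicit path or by comparison. Case \ref{Texpl+a} ($\ga>0$): along the axis $\Z_+\bfe_1$ the up-rate at $k\bfe_1$ is $e^{\ga k}$, growing geometrically; a standard argument (the chain, being transient, escapes to infinity, and on any excursion growing the first coordinate the holding times are summable) gives a.s.\ explosion — I would reference \cite{Volk3} for the bulk of this and only indicate the comparison with a pure birth chain with rates $e^{\ga k}$. Case \ref{Texpl+b} ($\ga=0$, $\gb>0$, $e(G)>0$): pick an edge $\set{1,2}$ and look at the face $\set{(k,\ell,0,\dots,0)}$; the up-rate for coordinate $1$ at $(k,\ell,0,\dots)$ is $e^{\gb\ell}$, so once the second coordinate is positive the first coordinate performs a birth process with geometrically growing rates — again summable holding times, explosion a.s.; one must check the chain a.s.\ reaches such a configuration and then the relevant coordinates never both vanish, which follows from transience. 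Case \ref{Texpl+c} ($\ga<0$, $\ga+\gb\min_i\nu_i>0$): let $i_0$ achieve $\min_i\nu_i$; the point is that if all of $i_0$ and its $\nu_{i_0}$ neighbours grow together (say all equal to $k$), the up-rate at coordinate $i_0$ is $e^{\ga k+\gb\nu_{i_0}k}=e^{(\ga+\gb\nu_{i_0})k}$, growing geometrically since the exponent is positive; one then needs a Lyapunov/drift argument showing the chain indeed pushes this cluster of coordinates jointly to infinity along a path on which the holding times are summable — this is where I would either cite \cite{Volk3} or give a short martingale argument using the restriction of $W$ to the relevant subspace. The recurring mechanism throughout \ref{Texpl+} is: a subset of coordinates on which the induced dynamics is a pure-birth-like process with at-least-geometric rates, entered a.s.\ by transience, on which $\sum$ of holding times converges a.s. I expect cases \ref{Texpl+a} and \ref{Texpl+b} to be routine (and largely in \cite{Volk3}), with \ref{Texpl+c} — propagating the joint growth of $i_0$ and all its neighbours — being the part that needs a genuine argument, parallel in spirit to the eigenvector-path construction in \refLs{LT2}--\ref{LT3}.
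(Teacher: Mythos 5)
Your overall architecture matches the paper's: the subcritical case $\ga<0$, $\ga+\gb\gl_1<0$ is dispatched via positive recurrence (\refT{Tmain}\ref{Tmain+}), cases \ref{Texpl0b} and \ref{Texpl0c} via the bound $q_\xi\le 2n$, and part \ref{Texpl+} is delegated to \cite{Volk3} --- which is exactly what the paper does, invoking Theorem 1(3) and Theorem 2 of that reference. (One caveat on part \ref{Texpl+}: upgrading ``explodes with positive probability along a favourable path'' to ``explodes a.s.'' is precisely the content being cited; it does not follow from transience alone, as your sketch for case \ref{Texpl+b} suggests.)

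The genuine gap is exactly where you located it: the critical case $\ga<0$, $\ga+\gb\gl_1=0$ (\refL{Lexpl00} in the paper). Your proposed mechanism --- that along any trajectory the exit rate grows at most linearly in $t$ or in $S(\zeta(t))$ --- is false. In the critical case $\gb>0$, and at $\xi=k\bfe_1$ with $1\sim 2$ the up-rate of coordinate $2$ is $e^{\gb k}$, exponential in $S(\xi)=k$; the estimate $W(y_k)=\Theta(k)$ holds only along the special path $\set{y_k}$ and says nothing about the rates the actual trajectory encounters. The paper's argument is different in kind: it introduces the negative semi-definite quadratic form $Q(\xi)=\frac12\innprod{(\ga E+\gb A)\xi,\xi}$ with gradient $U(\xi)=(\ga E+\gb A)\xi$, observes that $q_\xi\le\Phi(U(\xi))$ and that $\norm{U(\xi)}^2\asymp|Q(\xi)|$, and computes the one-step drift of $Q(\zeta(t))$, showing via the elementary \refL{Lu} that this drift is nonnegative whenever $\norm{U(\xi)}$ is large. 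Since $Q\le0$, the stopped process $-Q(\zeta(t\wedge\tau_m))$ is a positive supermartingale, hence converges a.s.; consequently $|Q(\zeta(t))|$ a.s.\ does not tend to infinity, so $q_{\zeta(t)}$ is bounded along a random subsequence and $\sum_t q_{\zeta(t)}\qw=\infty$ a.s. Your proposal contains neither the Lyapunov function $Q$ nor the supermartingale step, and the replacement you suggest does not work, so this case --- the only substantively new one in the theorem --- remains unproved in your write-up.
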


\begin{rem}\label{Rexpl}
  \refT{Texpl} gives a complete characterization of 
%the parameters   $(\ga,\gb)$ that cause 
explosions
when the graph $G$ is regular, i.e., $\nu_i$ is constant, since then 
$\min_i \nu_i=\gl_1$, see \eqref{nu}.

For other graphs $G$,
  \refT{Texpl} leaves one case open,
viz.\
\begin{equation}
\ga<0 
\quad\text{and}\quad
\ga+\gb\min_i \nu_i\le 0 <\ga+\gb\gl_1(G)  
\end{equation}
%$\ga<0$ and $\ga+\gb\min_i \nu_i\le 0 <\ga+\gb\gl_1(G)$
(and, as a consequence, $\gb>0$).
We conjecture that $\xi(t)$ always 
is explosive in this case, but leave this as an
open problem.
(Our intuition is that in this case, which is transient by \refT{Tmain},
$\xi(t)$ will tend to infinity along a path that stays rather close to the
line $\set{s \bfv_1:s\in\R}$ in $\R^n$, and that the rates $q_\xi$ are
exponentially large close to this line.)
\end{rem}

\begin{lemma}\label{Lexpl00}
  If\/  $\ga<0$ and $\ga+\gb\gl_1(G)=0$,
then the CTMC $\xi(t)$ is transient and non-explosive.
\end{lemma}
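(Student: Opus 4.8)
Transience is immediate from \refL{LT2}, since $\ga<0$ and $\ga+\gb\gl_1(G)=0\ge0$. Note that necessarily $e(G)>0$ (otherwise $\gl_1(G)=0$ and the hypothesis would force $\ga=0$), so $\gl_1>0$ and $\gb=-\ga/\gl_1>0$. The symmetric matrix $\ga E+\gb A$ is then negative semidefinite with largest eigenvalue $\ga+\gb\gl_1=0$, whose eigenspace $\ker(\ga E+\gb A)$ is the $\gl_1$-eigenspace of $A$; and since $A$ is nonzero with zero trace it has some eigenvalue $\neq\gl_1$, so $\ga E+\gb A$ has a spectral gap: there is $\gamma>0$ with $\innprod{(\ga E+\gb A)\eta,\eta}\le-\gamma\norm{\eta}^2$ for every $\eta$ orthogonal to $\ker(\ga E+\gb A)$.

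For non-explosion the plan is to exhibit a coercive function $V\ge0$ on $\ZZn$ (i.e.\ $\{\xi:V(\xi)\le r\}$ finite for every $r$) together with constants $b,c\ge0$ such that $\gen V\le cV+b$ everywhere, $\gen$ being the generator of $\xi(t)$; by the standard drift argument this makes $e^{-ct}V(\xi(t))$ essentially a supermartingale and precludes explosion. I would take
\[
V(\xi):=-W(\xi)+KS(\xi),\qquad K:=|\ga|+1,
\]
with $W,S$ as in \eqref{W} and \eqref{S}. The point of the term $-W$ is that, by the detailed balance identity \eqref{balance}, $W(\xi+\bfe_i)-W(\xi)=r_i$, where $r_i:=\ga\xi_i+\gb(A\xi)_i$ is exactly the exponent of the (unbounded) up-rate $e^{r_i}$; hence a step $\xi\to\xi+\bfe_i$ changes $V$ by $K-r_i$, so its contribution to $\gen V$ is $e^{r_i}(K-r_i)$, and since $\sup_{r\in\R}e^{r}(K-r)=e^{K-1}$ the entire up-part of $\gen V$ is at most $ne^{K-1}$, uniformly. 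A short computation (using $A_{ii}=0$) shows that a down-step $\xi\to\xi-\bfe_i$ changes $V$ by $r_i-\ga-K=r_i-1$, whence
\[
\gen V(\xi)=\sum_{i=1}^n e^{r_i}(K-r_i)+\sum_{i:\,\xi_i>0}(r_i-1)\le ne^{K-1}+\sum_{i=1}^n|r_i|.
\]

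It remains to bound $\sum_i|r_i|$ by an affine function of $V(\xi)$, and this is where the critical structure $\ga+\gb\gl_1=0$ enters. Since $\ga E+\gb A$ annihilates its kernel, $r_i=\innprod{(\ga E+\gb A)\xi,\bfe_i}=\innprod{(\ga E+\gb A)\xi^{\perp},\bfe_i}$, where $\xi^{\perp}$ denotes the component of $\xi$ orthogonal to $\ker(\ga E+\gb A)$; hence $|r_i|\le M\norm{\xi^{\perp}}$ with $M:=\max_i|\ga+\gb\gl_i|$. On the other hand $W(\xi)=\tfrac12\innprod{(\ga E+\gb A)\xi^{\perp},\xi^{\perp}}-\tfrac{\ga}{2}S(\xi)\le-\tfrac{\gamma}{2}\norm{\xi^{\perp}}^2+\tfrac{|\ga|}{2}S(\xi)$, so that $V(\xi)\ge\tfrac{\gamma}{2}\norm{\xi^{\perp}}^2+(K-\tfrac{|\ga|}{2})S(\xi)$ (using $K>|\ga|/2$); in particular $V$ is coercive and $\norm{\xi^{\perp}}^2\le 2V(\xi)/\gamma$. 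Therefore $\sum_i|r_i|\le nM\sqrt{2/\gamma}\,\sqrt{V(\xi)}\le nM\sqrt{2/\gamma}\,(V(\xi)+1)$, which gives $\gen V\le cV+b$ with, say, $c=nM\sqrt{2/\gamma}$ and $b=ne^{K-1}+c$, completing the argument.

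The main obstacle is finding a single $V$ that simultaneously tames the unbounded up-rates $e^{r_i}$ and the down-moves, which occur at bounded rate but can change $V$ by an unbounded amount. The choice $-W+KS$ handles the first through the exact cancellation $e^{r_i}(K-r_i)\le e^{K-1}$ hardwired into reversibility, and the second through the bound $\norm{\xi^{\perp}}^2\le 2V(\xi)/\gamma$, which is available only because in the critical regime the dangerous exponents $r_i$ depend solely on the deviation $\xi^{\perp}$ of $\xi$ from the neutral direction $\ker(\ga E+\gb A)$ and not on how far along that direction $\xi$ lies. (For a disconnected $G$ no extra work is needed: the estimates above only use $e(G)>0$ and the spectral gap, and the CTMC on a disconnected $G$ anyway splits into independent pieces, one per component.)
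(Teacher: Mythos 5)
Your proposal is correct, and for the non-explosion part it takes a genuinely different route from the paper, although both arguments ultimately exploit the same structural fact of the critical case: the exponents $r_i=U_i(\xi)=((\ga E+\gb A)\xi)_i$ governing the unbounded up-rates depend only on the component $\xi^{\perp}$ of $\xi$ orthogonal to the $\gl_1$-eigenspace, and the negative semi-definite quadratic form controls $\norm{\xi^{\perp}}^2$ (your bounds $|r_i|\le M\norm{\xi^{\perp}}$ and $\norm{\xi^{\perp}}^2\le 2V(\xi)/\gamma$ are in essence the paper's two-sided estimate \eqref{UQ} between $\norm{U(\xi)}^2$ and $|Q(\xi)|$). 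The paper works with the embedded DTMC $\zeta(t)$ and the function $Q(\xi)=\tfrac12\innprod{(\ga E+\gb A)\xi,\xi}\le0$ alone: it shows $-\QZ(t)$ is a supermartingale off a sublevel set, deduces from supermartingale convergence that $|Q(\zeta(t))|$ cannot tend to infinity, hence that the total rates $q_{\zeta(t)}$ are bounded infinitely often, and concludes via the criterion $\sum_t q_{\zeta(t)}^{-1}=\infty$; this is qualitative and essentially self-contained. You instead work directly with the CTMC generator and the coercive function $V=-W+KS$ (the added linear term $KS$ is what restores coercivity, since $-Q$ vanishes along the $\bfv_1$-direction), verify the quantitative drift bound $\gen V\le cV+b$ — your computations, including the cancellation $e^{r_i}(K-r_i)\le e^{K-1}$ from reversibility and the down-step increment $r_i-1$, all check out — and invoke the standard Foster--Lyapunov non-explosion criterion for CTMCs. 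What your approach buys is a cleaner, fully quantitative one-line drift inequality (in fact $\gen V=O(1+\sqrt{V})$); what it costs is reliance on the non-explosion criterion as a black box, for which you should supply a precise reference (e.g.\ the continuous-time Foster--Lyapunov criterion in Meyn--Tweedie or \cite{MPW}), whereas the paper's route needs only supermartingale convergence and the elementary $\sum q^{-1}$ test already quoted in \refS{S:ex}.
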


We prove first an elementary lemma.

\begin{lemma}\label{Lu}
  Define the functions $\phi,\psi:\R\to\R$ and $\Phi,\Psi:\R^n\to\R$ by,
with $\bfu=(u_1,\dots,u_n)$,
  \begin{align}
    \phi(u)&:=e^u+1,&
\psi(u)&:=u(e^u-1),
\label{psi}\\
\Phi(\bfu)&:=\sumin\phi(u_i),&
\Psi(\bfu)&:=\sumin\psi(u_i).
\label{Psi}
  \end{align}
Then $\Psi(\bfu)/\Phi(\bfu)\to+\infty$ as $\norm{\bfu}\to\infty$.
\end{lemma}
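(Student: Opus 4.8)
The plan is to reduce the statement to two uniform one-variable inequalities comparing $\psi$ with $\phi$, and then to split the argument according to the size of $\Phi(\bfu)$. First observe that $e^u-1$ has the same sign as $u$, so $\psi(u)=u(e^u-1)\ge0$ for every $u$, while $\phi(u)=e^u+1\ge1$; hence $\Psi\ge0$, $\Phi>0$, and the ratio makes sense. It therefore suffices to prove that for each $M>0$ there is an $R$ with $\Psi(\bfu)/\Phi(\bfu)>M$ whenever $\norm{\bfu}>R$.

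The first estimate is: for every $M>0$ there is a finite $C_M$ with $\psi(u)\ge M\phi(u)-C_M$ for all $u\in\R$. Indeed, $g_M(u):=\psi(u)-M\phi(u)=(u-M)e^u-u-M$ is continuous on $\R$ and tends to $+\infty$ both as $u\to+\infty$ (the term $(u-M)e^u$ dominates) and as $u\to-\infty$ (there $(u-M)e^u\to0$ while $-u-M\to+\infty$), so $g_M$ attains a finite minimum, which we denote $-C_M$. Summing over coordinates gives $\Psi(\bfu)\ge M\Phi(\bfu)-nC_M$ for every $\bfu$. The second estimate is a crude linear bound $\psi(u)\ge\tfrac12\abs{u}-\tfrac32$ for all $u$: the identity $\psi(u)=\abs{u}\,\tanh(\abs{u}/2)\,\phi(u)$ (a direct computation) together with $\phi\ge1$ gives $\psi(u)\ge\abs{u}\,\tanh(\abs{u}/2)\ge\abs{u}/2$ once $\abs{u}\ge3$, and for $\abs{u}<3$ the bound is trivial since $\psi\ge0$. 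Summing and using $\sum_i\abs{u_i}\ge\norm{\bfu}$ yields $\Psi(\bfu)\ge\tfrac12\norm{\bfu}-\tfrac{3n}2$.

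Now fix $M>0$ and apply the first estimate with $4M$ in place of $M$, so $\Psi(\bfu)\ge4M\Phi(\bfu)-nC_{4M}$. If $\Phi(\bfu)\ge K:=nC_{4M}/(2M)$, then $\Psi(\bfu)\ge4M\Phi(\bfu)-2M\Phi(\bfu)=2M\Phi(\bfu)$, so $\Psi(\bfu)/\Phi(\bfu)\ge2M>M$. In the complementary case $\Phi(\bfu)<K$, the second estimate gives $\Psi(\bfu)/\Phi(\bfu)>(\tfrac12\norm{\bfu}-\tfrac{3n}2)/K$, which exceeds $M$ as soon as $\norm{\bfu}>2MK+3n=:R$. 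Either way, $\norm{\bfu}>R$ forces $\Psi(\bfu)/\Phi(\bfu)>M$; since $M$ was arbitrary, $\Psi(\bfu)/\Phi(\bfu)\to+\infty$.

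The one point that needs care — and the reason two separate inequalities are used rather than one — is the asymmetry of $\psi$: as $u\to+\infty$ it grows like $ue^u$, far faster than $\phi(u)\sim e^u$, whereas as $u\to-\infty$ it grows only linearly, like $\abs{u}$, while $\phi(u)\to1$. A very negative coordinate thus contributes essentially nothing to $\Phi$ and could be masked in the ratio by a large positive coordinate that inflates $\Phi$; the bound $\psi\ge M\phi-C_M$ handles precisely the regime of large $\Phi$, and the linear bound $\psi\ge\tfrac12\abs{u}-\tfrac32$ mops up the residual regime where $\Phi$ stays bounded but $\norm{\bfu}$ is large. I do not expect any genuine obstacle beyond checking the two limits of $g_M$ (immediate) and pinning down the constants in the linear bound.
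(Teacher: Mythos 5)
Your proof is correct and follows essentially the same route as the paper's: both arguments rest on the uniform one-variable bound $\psi(u)\ge B\phi(u)-C(B)$ (coming from $\psi/\phi\to+\infty$ as $u\to\pm\infty$), combined with the fact that $\Psi(\bfu)\to+\infty$ as $\norm{\bfu}\to\infty$ to absorb the additive constant $nC$. Your explicit linear lower bound $\Psi(\bfu)\ge\tfrac12\norm{\bfu}-\tfrac{3n}{2}$ and the case split on the size of $\Phi$ are just a more quantitative bookkeeping of the paper's final step, not a different idea.
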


\begin{proof}
Note that $\phi(u)>0$ and $\psi(u)\ge0$ for all $u\in\R$, and that
$\psi(u)/\phi(u)\to+\infty$ as $u\to\pm\infty$.

Fix $B>0$. Then $\psi(u)-B\phi(u)>0$ if $|u|$ is large enough, and thus
there exists a constant $C=C(B)\ge0$ such that $\psi(u)-B\phi(u) \ge -C$
for all $u\in\R$.
Consequently, for any $\bfu\in\R^n$,
\begin{equation}\label{qul}
  B\Phi(\bfu)=\sumin B\phi(u_i)
\le \sumin\bigpar{\psi(u_i)+C}
=\Psi(\bfu)+nC. 
\end{equation}
Furthermore, $\psi(u)\to+\infty$ as $u\to\pm\infty$, and thus
$\Psi(\bfu)\to+\infty$ as $\norm{\bfu}\to\infty$. Consequently, there exists
$M=M(B)$ such that if $\norm{\bfu}>M$, then  $\Psi(\bfu)>nC$, and hence, by
\eqref{qul}, 
$B\Phi(\bfu)< 2 \Psi(\bfu)$, i.e., $\Psi(\bfu)/\Phi(\bfu)>B/2$.
Since $B$ is arbitrary, this completes the proof.
\end{proof}

\begin{proof}[Proof of \refL{Lexpl00}]
The CMTC $\xi(t)$ is transient by \refT{Tmain}\ref{Tmaint-} (\refL{LT2}).

  Let
  \begin{equation}
    \label{Q}
    Q(\xxi):=\frac12\innprod{(\ga E+\gb A)\xxi,\xxi},
\qquad \xxi\in \R^n,
  \end{equation}
be the quadratic part of $W(\xxi)$ in \eqref{W}.
Let, as in \refS{Spf}, $\bfv_1,\dots,\bfv_n$ be an orthonormal basis of
eigenvectors of $A$ with $A\bfv_k=\gl_k\bfv_k$.
The assumptions imply $\gb>0$ and thus, for any $k\le n$,
$\ga+\gb\gl_k\le\ga+\gb\gl_1=0$.
Hence, for any vector $\xxi=\sumkn c_k\bfv_k$,
\begin{equation}\label{Q<=0}
Q(\xxi)=\frac12\sumkn (\ga+\gb\gl_k)c_k^2\le0.
\end{equation}
In other words, $Q(\xxi)$ is a negative semi-definite quadratic form on
$\R^n$.

We denote the gradient of $Q(\xxi)$ by
$U(\xxi)=\bigpar{U_1(\xxi),\dots,U_n(\xxi)}$. Thus, by \eqref{Q},
\begin{equation}
  \label{U}
  U(\xxi):=\grad Q(\xxi) = (\ga E+\gb A)\xxi.
\end{equation}
It follows from \eqref{Q} and \eqref{U} that, for any $\xxi\in\R^n$,
\begin{equation}
  \label{Q+-}
  Q(\xxi\pm\bfe_i)
=Q(\xxi)+Q(\bfe_i)\pm\innprod{(\ga E+\gb A)\xxi,\bfe_i}
=Q(\xxi)+\frac{\ga}2\pm U_i(\xxi).
\end{equation}
Let $\ax:=-\ga/2>0$.
Since $W(\xxi)=Q(\xxi)+\ax S(\xxi)$ by \eqref{W}, and $S(\xxi)$ is the linear
function \eqref{S}, it
follows from \eqref{Q+-} that
\begin{equation}
  \label{W+-}
  W(\xxi\pm\bfe_i)-W(\xxi)
=
  Q(\xxi\pm\bfe_i)-Q(\xxi)
\pm \ax S(\bfe_i)
=\pm U_i(\xxi)-\ax\pm\ax.
\end{equation}
In particular, for $\xi\in\ZZn$, the rate of increase of the $i$-th
component is, by \eqref{q} and \eqref{balance},
\begin{equation}\label{q+}
q_{\xi,\xi+\bfe_i}=  e^{W(\xi+\bfe_i)-W(\xi)}=e^{U_i(\xi)}.
\end{equation}

Fix $\xi\in\ZZn$ and consider the DTMC $\zeta(t)$ started at $\zeta(0)=\xi$,
and the stochastic process $\QZ(t):=Q(\zeta(t))$, $t\in\Z_+$.
Denote the change of $\QZ(t)$ in the first step by
$\DQZ(0):=\QZ(1)-\QZ(0)$.
Then the expected change of $\QZ(t)$ in the first step is,
using \eqref{Q+-}, \eqref{p}, \eqref{q} and \eqref{q+},
\begin{align}
\E\bigpar{\DQZ(0)\mid\zeta(0)=\xi}
&=\sum_\eta p_{\xi,\eta}\bigpar{Q(\eta)-Q(\xi)}
\notag\\&
=\sumin \Bigpar{p_{\xi,\xi+\bfe_i} \bigpar{U_i(\xi)-\ax} 
  +p_{\xi,\xi-\bfe_i} \bigpar{-U_i(\xi)-\ax} } 
\notag\\&
=q_\xi\qw\sumin\Bigpar{e^{U_i(\xi)}U_i(\xi) -\indic_{\xi_i>0} U_i(\xi)}-\ax,
\label{dQ1}
\end{align}
where   $\indic_{\cE}$ denotes the indicator of  an event $\cE$.
Furthermore, 
using \eqref{q+} and the notation \eqref{Psi},
\begin{equation}\label{QPhi}
  q_\xi:=\sumin\bigpar{q_{\xi,\xi+\bfe_i}+q_{\xi,\xi-\bfe_i}}
=\sumin\bigpar{e^{U_i(\xi)}+\indic_{\xi_i>0}}
\le \Phi(U(\xi)).
\end{equation}
(With equality unless some $\xi_i=0$.)
Moreover, if $\xi_i=0$, then \eqref{U} implies 
$U_i(\xi)=\gb\sum_{j\sim i}\xi_j\ge0$. 
Hence,  \eqref{dQ1} yields
\begin{align}
&\E\bigpar{\DQZ(0)\mid\zeta(0)=\xi}
%&\E\bigpar{Q(\zeta(1))-Q(\zeta(0)\mid\zeta(0)=\xi}
\ge q_\xi\qw \sumin\Bigpar{e^{U_i(\xi)}U_i(\xi) -U_i(\xi)}-\ax
\ge \frac{\Psi(U(\xi))}{\Phi(U(\xi))}-\ax.
\label{dQ2}
\end{align}
\refL{Lu} now implies the existence of a constant $\CCname{\Ca}$ such that if
$\norm{U(\xi)}\ge \Ca$, then
$\E\bigpar{\DQZ(0)\mid\zeta(0)=\xi}\ge0$. 

We have, as in \eqref{Q<=0}, with $\omega_k:=\ga+\gb\gl_k\le0$, the
eigenvalues of $\ga E+\gb A$,
\begin{equation}
  \label{Qgo}
Q(\xxi)=\frac12\sumkn \omega_k \innprod{\xxi,\bfv_k}^2, 
\end{equation}
and it follows that the gradient $U(\xxi)$ can be expressed as
\begin{equation}
  U(\xxi)=\sumkn\omega_k \innprod{\xxi,\bfv_k} \bfv_k,
\end{equation}
and thus
\begin{equation}\label{Ugo}
\norm{ U(\xxi)}^2=\sumkn \omega_k^2 \innprod{\xxi,\bfv_k}^2.
\end{equation}
Comparing \eqref{Qgo} and \eqref{Ugo}, and recalling $\go_k\le0$,
we see that
%there exist positive constants $\CCname{\Cl}$ and $\CCname{\Cr}$ such that:
%\begin{equation}\label{UQ}
 %\Cl \abs{Q(\xxi)} \le \norm{U(\xxi)}^2 \le \Cr\abs{Q(\xxi)},
%\end{equation}
\begin{equation}\label{UQ}
2 \min_k|w_k|\cdot \abs{Q(\xxi)} \le \norm{U(\xxi)}^2 
\le 2\max_k|w_k|\cdot\abs{Q(\xxi)}.
\end{equation}

Hence, the result above shows the existence of a constant $\CCname\CQ$
such that
\begin{equation}\label{submart}
\text{If}\quad \abs{Q(\xi)}\ge\CQ,
\quad
\text{then}\quad
\E\bigpar{\DQZ(0)\mid  \zeta(0)=\xi}\ge0.
\end{equation}

Fix $m\ge0$ and consider $\zeta(t)$ for $t\ge m$. Define the stopping time
$\tau_m:=\inf\set{t\ge m:\abs{\QZ(t)}\le\CQ}$.
Then \eqref{submart} and the Markov property imply that
the stopped process $-\QZ(t\land\tau_m)$, $t\ge m$,
is a positive supermartingale. 
(Recall that $\QZ(t)\le0$ by \eqref{Q<=0}.)
Hence, this process converges a.s.\ to a finite limit. 
In particular, if we define the events
$\cE_m:=\set{|\QZ(t)|>\CQ\text{ for every $t\ge m$}}$
and
$\cE':=\set{|\QZ(t)|\to\infty\text{ as $t\to\infty$}}$, 
then $\P(\cE_m\cap \cE')=0$. Clearly, $\cE':=\bigcup_{m=1}^\infty
\cE'\cap\cE_m$.
Consequently, $\P(\cE')=0$.

We have shown that a.s.\ $|\QZ(t)|=|Q(\zeta(t))|$ does not converge to $\infty$.
In other words, a.s.\ there exists a (random) constant $M$ such that
$|Q(\zeta(t))|\le M$ infinitely often.
By \eqref{UQ} and \eqref{QPhi}, there exists for each $M<\infty$ a constant
$\CCname\CPhi(M)<\infty$  such that $\abs{Q(\xi)}\le M$ implies
$q_\xi\le\CPhi(M)$. 
Consequently, a.s., $q_{\zeta(t)}\le\CPhi(M)$ infinitely often, and thus
$\sum_{t=0}^\infty q_{\zeta(t)}\qw=\infty$, which implies that $\xi(t)$
does not explode.
\end{proof}

\begin{rem}
Note that  if $\alpha<0$, $\gb>0$  and $\alpha+\lambda_1\beta<0$, 
then the function $Q$ defined in~\eqref{Q} is negative definite, so that $\tilde Q(\bx):=-Q(\bx)\to \infty$ as $\norm{\bx}\to\infty$.
Therefore, it follows from  equation~\eqref{dQ2} that the CTMC $\xi(t)$ is positive recurrent by Foster's 
criterion for positive recurrence (e.g. see~\cite[Theorem 2.6.4]{MPW}). In other words, 
the function $\tilde Q$ can be used as the   Lyapunov function in Foster's criterion for showing 
positive recurrence of the Markov chain in this case.
In fact,  function $\tilde Q$ was used 
in Foster's criterion to show positive recurrence of the Markov chain in the following 
special case
$\alpha<0$ and $\alpha+\beta\max_i\nu_i<0$ in~\cite[Section 4.1.1]{Volk3}.
\end{rem}

\begin{proof}[Proof of \refT{Texpl}]
The non-explosive case  \ref{Texpl00} 
follows from 
\refT{Tmain}\ref{Tmain+} when $\ga+\gb\gl_1(G)<0$ (then the chain is
positive recurrent),
and from \refL{Lu} when $\ga+\gb\gl_1(G)=0$.
The other non-explosive cases \ref{Texpl0b} and \ref{Texpl0c}  
are trivial because in these cases \eqref{q} implies $q_{\xi,\eta}\le1$, and
thus $q_\xi\le 2n$ is bounded.

For explosion, we may assume that $G$ is connected, since we otherwise may
consider the components of $G$ separately, see \refR{Rdisconnected}.
Then,
\cite[Theorem 1(3) and its proof]{Volk3} 
show that if $\ga+\gb\min_i \nu_i>0$ and $\gb\ge0$, then 
$\xi(t)$ explodes a.s.; this includes the cases
\ref{Texpl+b} and \ref{Texpl+c} above, and the case
$\ga>0$, $\gb\ge0$.
Furthermore, \cite[Theorem 2]{Volk3} shows that if
$\ga>0$ and  $\gb\le0$, then $\xi(t)$ a.s.\ explodes;
together with the result just mentioned, this shows explosion when $\ga>0$.
\end{proof}

\begin{rem}
  It is shown in 
\cite{Volk3} that explosion may occur in several different ways, 
depending on both the parameters $\ga,\gb$ and the graph $G$. For example,
if $G$ is a star, then there are (at least) three possibilities, each
occuring with probability 1 when $(\ga,\gb)$ is in some region:
a single component $\xi_i$ explodes (tends to infinity in finite
time); two adjacent components explode simultaneously; or  all
components explode simultaneously. 

Furthermore, the results in~\cite{Volk3} show that in the
 explosive cases in \refT{Texpl+}, 
the Markov chain  asymptotically evolves as a pure birth process,
in the sense that, with probability one, 
there is a random finite time after which none of the components
decreases, 
i.e.\ there are no "death" events after this time.
Consequently, the corresponding discrete time Markov chain can be regarded as a growth process on a graph similar to interacting 
urn   models (e.g., see models in~\cite{Costa}, \cite{Volk1} and~\cite{Volk2}).
One of the main problems in such growth processes is the same as in the urn models. Namely, it is of interest 
 to understand how exactly the process escapes to infinity, i.e.\ whether all components grow indefinitely, or the growth localises 
in a particular subset of the underlying graph. 

We do not discuss this sort of problems here and hope to address it elsewhere. 
\end{rem}

\section{A modified model}\label{Smodified}
In this section, we study the CTMC $\txi(t)$ with the rates $\tq_{\xi,\eta}$
in \eqref{tq}, and the corresponding DTMC $\tzeta(t)$.
This model is interesting since we have ``decoupled'' $\ga$ and $\gb$, with
birth rates depending on $\ga$ and death rates depending on $\gb$.

Since $\tq_{\xi,\xi\pm\bfe_i}$ differ from $q_{\xi,\xi\pm\bfe_i}$ by the
same factor $e^{-\gb\sum_{j:j\sim i}\xi_j}$, which furthermore does not
depend on $\xi_i$, the balance equation \eqref{balanceq} holds for
$\tq_{\xi,\eta}$ too, and thus $\txi(t)$ has the same invariant measure
$\mu(\xi)=e^{W(\xi)}$ as $\xi(t)$.

The electric network $\tGabg$
corresponding to $\txi(t)$ has conductances
\begin{equation}
\label{tC}
\tC_{\xi-\bfe_i, \xi}:=e^{W(\xi-\bfe_i)+\ga (\xi_i-1)}=e^{W(\xi)-\gb (A\xi)_i}.
\end{equation}
\begin{rem}\label{RtC}
  If $\gb>0$, then $\tC_{\xi,\eta}\le C_{\xi,\eta}$,
and  if $\gb<0$, then $\tC_{\xi,\eta}\ge C_{\xi,\eta}$.
(If $\gb=0$, the two models are obviously identical.)
\end{rem}

\begin{theorem}\label{Ttxi}
  The results in \refT{Tmain} hold for $\txi(t)$ too, with a single
  exception:
If $e(G)=1$, $\ga<0$ and $\ga+\gb\gl_1(G)=0$, then $\txi(t)$ is null recurrent
while $\xi(t)$ is transient. 
\end{theorem}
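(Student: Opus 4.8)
The plan is to reuse as much of the proof of \refT{Tmain} as possible, exploiting the fact that $\tGabg$ and $\Gabg$ differ only by bounded-ratio conductances \emph{on most edges}. First I would record the quantitative comparison: by \eqref{tC}, $\tC_{\xi-\bfe_i,\xi}=e^{W(\xi)-\gb(A\xi)_i}$, so the ratio $\tC_{\xi-\bfe_i,\xi}/C_{\xi-\bfe_i,\xi}=e^{-\gb(A\xi)_i}$. When $\gb\le0$ this ratio is $\ge1$ everywhere (Remark~\ref{RtC}), and when $\gb\ge0$ it is $\le1$ everywhere; in either case, on any subnetwork where the coordinates $(A\xi)_i$ stay bounded — e.g.\ a single coordinate axis, or $\Z_+^3\times\set{0}^{n-3}$ inside $\GO$ — the ratio is bounded above and below by positive constants. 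This immediately transfers all the transience lemmas \refLs{LT1}--\ref{LT4}: each of those produced a subnetwork $\gG'$ (an axis in \refL{LT1}, a path $\set{y_k}$ near $\R\bfv_1$ in \refLs{LT2}--\ref{LT3}, a copy of $\Z_+^3$ in \refL{LT4}) with $\Reff(\gG')<\infty$; along such $\gG'$ the new resistances differ from the old ones by at most a constant factor when $\gb\le0$, or are \emph{smaller} when $\gb\ge0$ since then $\tC\ge\dots$ is false — wait, for $\gb\ge0$ one has $\tC\le C$, i.e.\ larger resistances, so I must check boundedness of $(A\xi)_i$ along $\gG'$, which holds in each case (on an axis $A\xi$ has bounded entries; along $\set{y_k}$ one has $a_{k,i}=O(1)$ for $i\ne1$ and $(A y_k)_i=\gl_1 a_{k,1}v_{1,i}+O(1)$, so $\gb(Ay_k)_i$ only improves the exponent $W(y_k)$ up to a \emph{linear} term, which is dominated by the quadratic growth used in \refLs{LT2}--\ref{LT3}; in \refL{LT4}, $A\xi=0$ on $\gG'$). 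Hence $\Reff(\tGabg)\le\Reff(\gG')<\infty$ in every transient case of \refT{Tmain}, possibly after rechecking the exponent bounds, which is routine.

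For the recurrent cases I would argue by Rayleigh monotonicity together with the total-mass computation. When $\gb\ge0$ we have $\tC_{\xi,\eta}\le C_{\xi,\eta}$ globally, so $\Reff(\tGabg)\ge\Reff(\Gabg)=\infty$ whenever $\Gabg$ is recurrent; this covers \refLs{LR1}, \ref{LR0}, \ref{LRc}. When $\gb\le0$ I cannot compare directly, but I can redo the short-circuiting arguments of \refLs{LR1b} and \ref{LR2}: the point is that the conductance of the combined edge between $V_{L-1}$ and $V_L$ in the short-circuited network is $\sum_{\xi\in V_L}\sum_i \tC_{\xi-\bfe_i,\xi}$, and since $\sum_i\tC_{\xi-\bfe_i,\xi}=e^{W(\xi)}\sum_i e^{-\gb(A\xi)_i}\le n e^{W(\xi)-\gb\innprod{A\xi,\bfo}}$ — more carefully, $\sum_i e^{-\gb(A\xi)_i}$ is a sum of at most $n$ exponentials each of which is $\le e^{-\gb\innprod{A\xi,\mathbf 1}}\le e^{|\gb|\cdot\,\text{something}}$; the cleanest route is to bound $-\gb(A\xi)_i\le |\gb|\sum_j\xi_j=|\gb|S(\xi)\le|\gb|L$ for $\xi\in V_L$, giving $\tC_{\xi-\bfe_i,\xi}\le e^{|\gb|L}e^{W(\xi)}$, an extra factor only exponential in $L$. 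In \refL{LR1} the bound on $e^{W(\xi)}$ was $Ce^{-cL^2}$, which absorbs $e^{|\gb|L}$, so $C_L=O(1)$ still and $\Reff(\tGGxx)=\infty$; in \refL{LR2} the bound was $e^{-bu^2}$ with $b=-\gb>0$, and here I must be more careful because $-\gb(A\xi)_i$ can be as large as a constant times $u^2$ (if vertex $i$'s large coordinate is adjacent to another large coordinate), so I would instead use the \emph{signed} bound: on $\gG'':=$ short-circuited $\tGabg$, the relevant quantity is $\tC_{\xi-\bfe_i,\xi}=e^{W(\xi-\bfe_i)+\ga(\xi_i-1)}=e^{W(\xi-\bfe_i)}$ since $\ga=0$, and $W(\xi-\bfe_i)=-b\sum_{\{j,k\}:j\sim k}(\xi-\bfe_i)_j(\xi-\bfe_i)_k\le -b u'^2$ where $u'$ is the third-largest coordinate of $\xi-\bfe_i$; since $\xi-\bfe_i$ and $\xi$ differ in one coordinate by one, $u'\ge u-1$, so $W(\xi-\bfe_i)\le -b(u-1)^2$, and the \refL{LR2} estimate goes through with a shifted index. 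So recurrence is preserved in all the $\gb\le0$, $\ga=0$ cases too, and positive vs.\ null recurrence follows from the invariant-mass comparison: since $\txi(t)$ has \emph{the same} invariant measure $\mu(\xi)=e^{W(\xi)}$ as $\xi(t)$, \refL{LI} shows $\sum_\xi\mu(\xi)<\infty$ iff $\ga<0,\ \ga+\gb\gl_1<0$, and (in the recurrent cases) non-explosion plus finite invariant mass gives positive recurrence, exactly as in \refL{L+} — one only needs non-explosion of $\txi(t)$ in the recurrent regime, which holds because $\txi(t)$ is recurrent there.

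The genuine new phenomenon, and the main obstacle, is the single exceptional case $e(G)=1$, $\ga<0$, $\ga+\gb\gl_1=0$ (so necessarily $\gb>0$ and $\gl_1=1$, since a graph with one edge has $\gl_1=1$). Here \refL{LT2}/\refL{Lexpl00} shows $\xi(t)$ is transient and non-explosive, but I claim $\txi(t)$ is \emph{null recurrent}. For this I would compute directly with the one-edge graph: label the two endpoints $1,2$ and note all other vertices are isolated and contribute independent (null-recurrent, since $\ga<0\Rightarrow$ positive recurrent — wait, isolated vertices give $\gb$ irrelevant, $\ga<0$ gives positive recurrence; so they do not affect recurrence type). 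So it suffices to treat $n=2$, $1\sim2$, $A=\begin{pmatrix}0&1\\1&0\end{pmatrix}$, $\gl_1=1$, $\gb=-\ga=:b>0$. Then $W(\xi)=\frac\ga2(\xi_1(\xi_1-1)+\xi_2(\xi_2-1))+\gb\xi_1\xi_2=-\frac b2(\xi_1^2+\xi_2^2)+b\xi_1\xi_2+\frac b2(\xi_1+\xi_2)=-\frac b2(\xi_1-\xi_2)^2+\frac b2(\xi_1+\xi_2)$. The conductance \eqref{tC} of the edge $\{\xi-\bfe_1,\xi\}$ is $e^{W(\xi)-\gb(A\xi)_1}=e^{W(\xi)-b\xi_2}=e^{-\frac b2(\xi_1-\xi_2)^2+\frac b2(\xi_1-\xi_2)}$, which depends \emph{only on $\xi_1-\xi_2$}; similarly for the edge $\{\xi-\bfe_2,\xi\}$ it equals $e^{-\frac b2(\xi_2-\xi_1)^2+\frac b2(\xi_2-\xi_1)}$. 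I would then short-circuit the diagonals $D_k:=\set{\xi\in\Z_+^2:\xi_1-\xi_2=k}$... hmm, actually the natural reduction is: along the diagonal direction the conductances are bounded (each $D_k$-to-$D_{k\pm1}$ crossing has conductance $\le e^{\frac b2|k|-\frac b2 k^2+\dots}$, a Gaussian in $k$), while in the anti-diagonal direction $(\xi_1+\xi_2)$ increasing with $\xi_1-\xi_2$ fixed costs nothing extra since the conductance does not depend on $\xi_1+\xi_2$ — i.e.\ along the ray $\xi_1=\xi_2$ all conductances equal $e^{0}=1$ so that ray alone is a recurrent (1-dimensional) network, but that only shows a subnetwork is recurrent, which does not help for recurrence of the whole. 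Instead I would show $\Reff(\tGabg)=\infty$ by a Nash–Williams / short-cut argument: short-circuit each \emph{anti-diagonal} $V_L=\set{\xi_1+\xi_2=L}$ as in \refE{Esimple}; the combined conductance between $V_{L-1}$ and $V_L$ is $\sum_{\xi\in V_L}(\tC_{\xi-\bfe_1,\xi}+\tC_{\xi-\bfe_2,\xi})=\sum_{k:\ |k|\le L,\ k\equiv L}\bigl(e^{-\frac b2 k^2+\frac b2 k}+e^{-\frac b2 k^2-\frac b2 k}\bigr)\le 2\sum_{k\in\Z}e^{-\frac b2 k^2+\frac b2|k|}=:C<\infty$, a constant independent of $L$. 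Hence the short-circuited network is a path with conductances $\le C$, i.e.\ resistances $\ge 1/C$, in series, so $\Reff(\tGGxx)=\sum_L\Theta(1)=\infty$, giving $\Reff(\tGabg)=\infty$ and recurrence of $\txi(t)$. Then, as above, $\txi(t)$ has the same invariant measure $e^{W(\xi)}$ whose total mass is $\sum_\xi e^{-\frac b2(\xi_1-\xi_2)^2+\frac b2(\xi_1+\xi_2)}=\infty$ (summing over $\xi_1+\xi_2=L$ gives $\gtrsim e^{\frac b2 L}$), so by \refL{L+}-type reasoning $\txi(t)$ is null recurrent. Thus $\txi(t)$ is null recurrent exactly where $\xi(t)$ is transient, establishing the stated exception. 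I would also remark that when $e(G)\ge2$ the same diagonal bound fails because the combined conductance across $V_L$ picks up polynomially-in-$L$ many essentially-unbounded terms, which is why the exception is confined to $e(G)=1$; I expect pinning down this dichotomy cleanly to be the only delicate point, and the rest to be bookkeeping.
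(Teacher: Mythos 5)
Most of your transfer argument is sound and follows the same route as the paper, which likewise asserts that the lemmas of Section~4 carry over with minor modifications, the only problematic one being \refL{LT2}. Your treatment of the exceptional case $e(G)=1$, $\ga<0$, $\ga+\gb\gl_1=0$ is correct and is essentially the paper's \refL{LX0}: you show the conductances $\tC_{\xi-\bfe_i,\xi}$ depend only on $\xi_1-\xi_2$ and are uniformly bounded, then conclude recurrence by a cut-set argument over the anti-diagonals and null recurrence from the infinite total mass of $e^{W}$; the paper instead bounds every conductance by $1$ and invokes recurrence of simple random walk on $\Z_+^2$ via Rayleigh monotonicity, an equivalent route. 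Your handling of the recurrent regimes (Rayleigh comparison for $\gb\ge0$, the $e^{|\gb|L}$ absorption and the shifted third-largest-coordinate bound for $\gb<0$) is also fine.

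The genuine gap is transience of $\txi(t)$ when $\ga<0$, $\ga+\gb\gl_1=0$ and $e(G)\ge2$. On the critical line the quadratic term in \eqref{WYk0} vanishes, so \eqref{WYk} gives only \emph{linear} growth $W(y_k)\ge\frac{|\ga|}{2}k+O(1)$, while the correction coming from \eqref{tC} is $-\gb(Ay_k)_i\approx-\gb\gl_1 t_k v_{1i}$, also linear and of the unfavourable sign (here $\gb>0$). Your claim that this correction ``is dominated by the quadratic growth used in \refLs{LT2}--\ref{LT3}'' is therefore false precisely for \refL{LT2} at criticality: the resistance of the edge $\{y_{k-1},y_k\}$ with $y_k=y_{k-1}+\bfe_i$ equals $e^{-W(y_k)-\ga y_{k,i}}$, and $W(y_k)+\ga y_{k,i}=\ax\,t_k\bigl(S(\bfv_1)-2v_{1i}\bigr)+O(1)$ with $\ax=-\ga/2$, so summability of the resistances requires the strict eigenvector inequality $v_{1i}<\sum_{j\neq i}v_{1j}$ for every $i$. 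This is exactly what fails when $e(G)=1$ (there $\bfv_1\propto(1,1)$) and what must be \emph{proved} when $e(G)\ge2$; the paper supplies it as \refL{LG} for connected $G$, with a separate choice of eigenvector when $G$ is a disjoint union of edges and isolated vertices, and then deduces transience in \refL{LXt}. Your closing observation that the $e(G)=1$ recurrence bound ``fails'' for $e(G)\ge2$ only shows that one particular recurrence argument does not extend; it does not establish transience there, so without the eigenvector inequality your proof of the theorem is incomplete on the critical line for $e(G)\ge2$. (For $\ga+\gb\gl_1>0$ strictly, the quadratic term survives and your domination argument is fine; the issue is confined to the boundary case.)
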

Here $\gl_1(G)$ is as above the largest eigenvalue of $G$.
If $e(G)=1$, then $\gl_1(G)=1$; thus the exceptional case is $e(G)=1$,
$\ga=-\gb<0$.

\begin{proof}
The lemmas in \refS{Spf} all hold for $\txi(t)$ too
by the same proofs with no or minor modifications,
except \refL{LT2} in the case $\ga<0$, $\ga+\gb\gl_1=0$; we omit the
details.
This exceptional case is treated in \refLs{LXt} and~\ref{LX0} below.
\end{proof}
A few cases alternatively follow by \refR{RtC} and the 
Rayleigh monotonicity law.

Before treating the exceptional case, we give a simple combinatorial lemma.

\begin{lemma}
  \label{LG}
Suppose that $G$ is a connected graph with $e(G)\ge2$, and let as above
$\bfv_1=(v_{11},\dots,v_{1n})$ be a positive eigenvector of $A$ with
eigenvalue $\gl_1$.
Then, for each $i$,
\begin{equation}\label{lg}
  v_{1i}<\sum_{j\neq i} v_{1j}.
\end{equation}
\end{lemma}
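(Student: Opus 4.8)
The plan is to use the eigenvalue equation $A\bfv_1=\gl_1\bfv_1$ together with the bounds $\gl_1\le\max_i\nu_i$ from \eqref{nu} and a careful look at what happens when equality is attempted in \eqref{lg}. First I would write out the eigenvector equation at a vertex $i$: since $(A\bfv_1)_i=\sum_{j\sim i}v_{1j}=\gl_1 v_{1i}$, and all entries $v_{1j}$ are strictly positive (here $G$ is connected, so Perron--Frobenius gives strict positivity of $\bfv_1$), we get $v_{1i}=\gl_1^{-1}\sum_{j\sim i}v_{1j}$. The quantity $\sum_{j\neq i}v_{1j}$ on the right of \eqref{lg} is at least $\sum_{j\sim i}v_{1j}=\gl_1 v_{1i}$, so if $\gl_1>1$ we are immediately done. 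Hence the only delicate case is $\gl_1\le1$, and since $e(G)\ge2$ and $G$ is connected we actually have $\gl_1>1$ strictly in most situations; I would pin down the borderline.

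The key step is therefore to rule out $\gl_1\le 1$ for a connected graph with $e(G)\ge 2$. Here I would argue: a connected graph with $e(G)\ge2$ contains either a path on $3$ vertices ($\sfP_3$) or, if not, must be a triangle $\sfC_3$ (since any connected graph on $\ge3$ vertices with $\ge2$ edges either has a vertex of degree $\ge2$, forcing a $\sfP_3$ subgraph, or is exactly $\sfC_3$). By interlacing / monotonicity of the largest eigenvalue under taking subgraphs (Rayleigh monotonicity for $\gl_1$), $\gl_1(G)\ge\gl_1(\sfP_3)=\sqrt2>1$ in the first case, and $\gl_1(\sfC_3)=2>1$ in the second. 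Thus $\gl_1(G)>1$ always, and the inequality from the previous paragraph, $\sum_{j\neq i}v_{1j}\ge\gl_1 v_{1i}>v_{1i}$, gives \eqref{lg}. Alternatively, and perhaps more cleanly, I would avoid the eigenvalue lower bound entirely: fix $i$; because $G$ is connected with $e(G)\ge2$, there is some vertex $k\neq i$ that is \emph{not} adjacent to $i$ (if every other vertex were adjacent to $i$, then since $e(G)\ge 2$ forces $n\ge 3$ and $\sum_j\nu_j=2e(G)$, one checks there must still be an edge not incident to\ldots) — this sub-argument needs care, so I would instead just keep the $\gl_1>1$ route.

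The main obstacle I anticipate is the clean justification that $\gl_1(G)>1$ for every connected graph with at least two edges, i.e.\ correctly enumerating the minimal cases ($\sfP_3$ and $\sfC_3$) and invoking eigenvalue monotonicity under edge/vertex deletion. Everything else — the eigenvalue equation, strict positivity of $\bfv_1$, and the final chain of inequalities $v_{1i}<\gl_1 v_{1i}=\sum_{j\sim i}v_{1j}\le\sum_{j\neq i}v_{1j}$ — is routine. I would present the $\gl_1>1$ fact as a short separate observation and then close with the one-line deduction of \eqref{lg}.
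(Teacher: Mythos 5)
Your proof is correct, but it reaches the strict inequality by a different route than the paper. Both arguments start from the same chain $v_{1i}\le\gl_1 v_{1i}=(A\bfv_1)_i=\sum_{j\sim i}v_{1j}\le\sum_{j\neq i}v_{1j}$; the question is where the strictness comes from. You establish the stronger fact $\gl_1(G)>1$ for every connected graph with $e(G)\ge2$, by noting that such a graph has $n\ge3$ and hence a vertex of degree at least $2$, so it contains $\sfP_3$ as a subgraph, and then invoking monotonicity of the largest adjacency eigenvalue under subgraphs to get $\gl_1(G)\ge\gl_1(\sfP_3)=\sqrt2>1$; the strict inequality then follows in one line since $v_{1i}>0$. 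The paper instead uses only $\gl_1\ge1$ (from \eqref{nu} and connectedness) and disposes of the potential equality case directly: if all inequalities in the chain were equalities, then $\gl_1=1$ and every $j\neq i$ would be a neighbour of $i$, whence the eigenvector equation at each such $j$ gives $v_{1j}\ge v_{1i}$, and $n\ge3$ forces $\sum_{j\neq i}v_{1j}\ge2v_{1i}>v_{1i}$. Your version buys a cleaner one-line conclusion at the cost of importing the subgraph-monotonicity fact for $\gl_1$ (which the paper never states, though it is standard via Perron--Frobenius/Rayleigh quotients); the paper's version is self-contained, using only the degree bound it has already recorded. Two small remarks on your write-up: your case split is slightly off, since $\sfC_3$ has all vertices of degree $2$ and so also contains $\sfP_3$ --- the dichotomy is unnecessary; and you were right to abandon the ``find a vertex $k\neq i$ not adjacent to $i$'' alternative, since it fails outright when $i$ is the centre of a star $\sfK_{1,m}$.
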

\begin{proof}
  First, e.g.\ by \eqref{nu}, $\gl_1\ge1$.
Hence, for every $i$,
\begin{equation}\label{lg1}
  v_{1i}\le \gl_1 v_{1i}=(A\bfv_1)_i=\sum_{j\sim i}v_{1j}\le \sum_{j\neq i} v_{1j}.
\end{equation}
If one of the inequalities in \eqref{lg1} is strict, then
\eqref{lg} holds. 
In the remaining case $\gl_1=1$, and every $j\neq i$ is a neighbour of $i$.
Consequently, if $j\neq i$, then
\begin{equation}\label{lg2}
  v_{1j}=\gl_1 v_{1j}=(A\bfv_1)_j 
=\sum_{k\sim j}v_{1k}
\ge v_{1i}.
\end{equation}
By the assumption $e(G)\ge2$, $G$ has at least 3 vertices, and thus
\eqref{lg2}
implies $\sum_{j\neq i} v_{1j} \ge 2 v_{1i}>v_{1i}$, so \eqref{lg} holds in
this case too.
\end{proof}

\begin{lemma}
  \label{LXt}
 If\/ $\ga<0$,  $\ga+\gb\gl_1\ge0$ and $e(G)\ge2$, 
 then the CTMC\/ $\txi(t)$ is transient.
\end{lemma}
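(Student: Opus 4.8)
The plan is to run the path/subnetwork argument of \refL{LT2} and \refL{LT3}, but now with the modified conductances \eqref{tC}, and to use the combinatorial input of \refL{LG} to salvage the estimate in the borderline regime $\ga+\gb\gl_1=0$. We may assume $G$ is connected (cf.\ \refR{Rdisconnected}). Note first that $\ga<0$ together with $\ga+\gb\gl_1\ge0$ forces $\gb>0$ and $\gl_1>0$, so \eqref{tC} does define genuine positive conductances.

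First I would take exactly the path $\bfo=y_0,y_1,y_2,\dots$ built in the proof of \refL{LT2} (the integer points following the ray $t\bfv_1$, with intermediate unit steps inserted), so that $\norm{y_{k+1}-y_k}=1$ and $S(y_k)=k$; writing $y_k=\sum_i a_{k,i}\bfv_i$ one has $a_{k,i}=O(1)$ for $i\ne1$ while, as in \eqref{WY3}, $a_{k,1}=ck+O(1)$ with $c:=S(\bfv_1)\qw$. Let $y_k-y_{k-1}=\bfe_{i_k}$. By \eqref{tC}, the resistance of the edge $\set{y_{k-1},y_k}$ in $\tGabg$ is $\widetilde R_k=e^{-W(y_k)+\gb(Ay_k)_{i_k}}$. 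By \eqref{W} and \eqref{WYk0},
\begin{equation*}
W(y_k)=\tfrac{|\ga|}2k+\tfrac12(\ga+\gb\gl_1)a_{k,1}^2+O(1),
\end{equation*}
while expanding $Ay_k=\sum_i a_{k,i}\gl_i\bfv_i$ gives $(Ay_k)_{i_k}=\gl_1 v_{1,i_k}a_{k,1}+O(1)=\gl_1 v_{1,i_k}\,ck+O(1)$, where $\bfv_1=(v_{11},\dots,v_{1n})$. Hence
\begin{equation*}
\widetilde R_k=\exp\Bigpar{-\tfrac{|\ga|}2k-\tfrac12(\ga+\gb\gl_1)a_{k,1}^2+\gb\gl_1 v_{1,i_k}\,ck+O(1)}.
\end{equation*}

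Then I would split into two cases. If $\ga+\gb\gl_1>0$, then $a_{k,1}^2\asymp k^2$ and the quadratic term dominates the linear terms, so $\widetilde R_k\le e^{-c_1k^2}$ for all large $k$ and $\sum_k\widetilde R_k<\infty$. If $\ga+\gb\gl_1=0$, the quadratic term vanishes and $\gb\gl_1=|\ga|$, so $\widetilde R_k\le C\exp\bigpar{|\ga|\,k\,(c\,v_{1,i_k}-\tfrac12)}$; by \refL{LG} (applicable since $G$ is connected and $e(G)\ge2$) one has $2v_{1,i_k}<S(\bfv_1)=c\qw$, i.e.\ $c\,v_{1,i_k}<\tfrac12$, and since there are only finitely many vertices $\delta:=|\ga|\bigpar{\tfrac12-c\max_i v_{1i}}>0$, whence $\widetilde R_k\le Ce^{-\delta k}$ and again $\sum_k\widetilde R_k<\infty$. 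In either case the subnetwork $\gG':=\set{y_k}\subset\tGabg$ is a path of finite effective resistance $\sum_k\widetilde R_k$, so $\Reff(\tGabg)\le\Reff(\gG')<\infty$ by Rayleigh's monotonicity law, and $\txi(t)$ is transient.

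I expect the only real obstacle to be the borderline case $\ga+\gb\gl_1=0$: there $\widetilde R_k$ decays only linearly, at rate $|\ga|\bigpar{\tfrac12-c\,v_{1,i_k}}$, which is strictly positive for every $i_k$ precisely because $v_{1i}<\tfrac12 S(\bfv_1)$ — exactly the content of \refL{LG}, and the reason the hypothesis $e(G)\ge2$ is needed. (When $e(G)=1$ this inequality degenerates to an equality, $\widetilde R_k$ stays bounded away from $0$, and the path argument breaks down, consistent with $e(G)=1$ being the genuine exception in \refT{Ttxi}.) The $\ga+\gb\gl_1>0$ subcase, and, modulo \refR{Rdisconnected}, the reduction to connected $G$, should be routine.
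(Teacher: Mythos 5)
Your main argument is correct and is essentially the paper's: the same path $\set{y_k}$ following the Perron direction $t\bfv_1$, the same splitting into $\ga+\gb\gl_1>0$ (quadratic decay of resistances) versus $\ga+\gb\gl_1=0$ (linear decay, rescued by \refL{LG}), and the same conclusion via Rayleigh monotonicity. The only cosmetic difference is that the paper uses the first expression in \eqref{tC}, writing the resistance as $e^{-W(y_k)-\ga y_{k,i}}$ and bounding $W(y_k)+\ga y_{k,i}\ge -\tfrac{\ga}{2}\bigpar{S(y_k)-2y_{k,i}}+O(1)$ via \eqref{lgg}, whereas you use the second expression and track $\gb(Ay_k)_{i_k}$; unwinding the detailed balance relation shows these are the same computation.

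The one genuine gap is the opening reduction ``we may assume $G$ is connected (cf.\ \refR{Rdisconnected})''. That remark only says the chain factors into independent chains over the components, so you may pass to a single component \emph{provided that component alone is transient}. If $G$ is a disjoint union of isolated edges and isolated vertices with $e(G)\ge2$ (e.g.\ two disjoint copies of $\sfK_2$), then every component has at most one edge, and by \refL{LX0} each single-edge component gives a \emph{null recurrent} chain; no component is transient, so the reduction breaks down even though the full product chain is transient. Moreover \refL{LG}, which your borderline case relies on, is stated only for connected $G$ with $e(G)\ge2$, so it does not apply to the individual components here. The paper closes this case separately: since $\gl_1=1$ has a multidimensional eigenspace, one may choose the top eigenvector $\bfv_1=\tfrac12(\bfe_1+\bfe_2+\bfe_3+\bfe_4)$ supported on two edges, for which \eqref{lg} holds strictly, and then run the identical path argument. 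You need this (or an equivalent device) to cover disconnected $G$; everything else in your proof is sound.
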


\begin{proof}
If $G$ is connected, then $\bfv_1$ satisfies \eqref{lg} by \refL{LG}.  

On the other hand, if $G$ is disconnected and has a component with at least
two edges, it suffices to consider that component.

In the remaining case, $G$ consists only of isolated edges and
vertices. There are at least two edges, which we w.l.o.g.\ may assume are 12
and 34. Then $\gl_1=1$ and 
$\bfv_1:=\frac12(\bfe_1+\bfe_2+\bfe_3+\bfe_4)=\frac12(1,1,1,1,0,\dots)$ is
an eigenvector satisfying  \eqref{lg}.

Hence we may assume that $\bfv_1$ satisfies \eqref{lg}. Hence there exists
$\gd>0$ such that for every $i=1,\dots,n$,
\begin{equation}\label{lgg}
S(\bfv_1)=  v_{1i}+\sum_{j\neq i} v_{1j} \ge 2v_{1i}+\gd.
\end{equation}

We follow the proof of \refL{LT2}, and note that there is equality in
\eqref{WYk0} and \eqref{WYk}. Hence, for any $i$, again writing
$\ax:=-\ga/2>0$, and using \eqref{yx},
\begin{align}
  W(y_k)+\ga y_{k,i}
&=\ax\bigpar{S(y_k)-2y_{k,i}}+O(1)
=\ax t_k \bigpar{S(\bfv_1)-2v_{1,i}} + O(1)
\notag\\&
\ge \ax\gd t_k + O(1) 
\ge c' k + O(1)
\end{align}
for some $c>0$.
Thus, the resistance of the edge connecting $y_{k}$ and
$y_{k+1}$ is, for some $i$, 
\begin{equation}\label{lgz}
  R_{k+1}=\tC_{y_k,y_k+\bfe_i}\qw = e^{-W(y_k)-\ga y_{k,i}} \le e^{-c' k+O(1)}.
\end{equation}
Hence, $\sumk R_k <\infty$, and the network is transient by the same
argument as before.
\end{proof}

\begin{lemma}
  \label{LX0}
 If\/ $\ga<0$,  $\ga+\gb\gl_1\ge0$ and $e(G)=1$, 
 then the CTMC\/ $\txi(t)$ is null recurrent.
\end{lemma}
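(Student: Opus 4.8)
The plan is to prove that $\txi(t)$ is \emph{recurrent}; null recurrence then comes for free. Since $e(G)=1$ we have $\gl_1(G)=1$, so the hypothesis $\ga+\gb\gl_1(G)=0$ reads $\gb=-\ga$; put $a:=-\ga>0$, so that $\gb=a$. Then $\Zabg=\infty$ by \refL{LI}, and since $\txi(t)$ shares the invariant measure $\mu(\xi)=e^{W(\xi)}$ with $\xi(t)$, that measure has infinite total mass. A recurrent CTMC is non-explosive, and (its invariant measure being unique up to a constant) it cannot have an invariant distribution, hence it is null recurrent. So everything reduces to showing $\Reff(\tGabg)=\infty$.

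First I would record the conductances. Take the single edge to be $\{1,2\}$. From \eqref{W} with $\gb=a$ one gets, writing $d:=\xi_1-\xi_2$, the splitting
\[
W(\xi)=-\tfrac a2\,d^2+\tfrac a2(\xi_1+\xi_2)-\tfrac a2\sum_{k\ge3}\xi_k(\xi_k-1),
\]
and hence, by \eqref{tC} together with $(A\xi)_1=\xi_2$, $(A\xi)_2=\xi_1$, $(A\xi)_i=0$ for $i\ge3$,
\[
\tC_{\xi-\bfe_1,\xi}=e^{-\frac a2 d(d-1)}\prod_{k\ge3}e^{-\frac a2\xi_k(\xi_k-1)},\qquad
\tC_{\xi-\bfe_2,\xi}=e^{-\frac a2 d(d+1)}\prod_{k\ge3}e^{-\frac a2\xi_k(\xi_k-1)},
\]
while $\tC_{\xi-\bfe_i,\xi}=e^{W(\xi)}$ for $i\ge3$. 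Note that $d(d\pm1)\ge0$ for every integer $d$.

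The one real difficulty is that the short-circuiting of the level sets $V_L=\{S(\xi)=L\}$ from \refE{Esimple} \emph{does not work here}: near the diagonal $\xi_1\approx\xi_2$ the edges in the directions $i\ge3$ have conductance $e^{W(\xi)}$ of order $e^{\frac a2 S(\xi)}$, so the total conductance between $V_{L-1}$ and $V_L$ grows exponentially in $L$. The remedy is to short-circuit instead the coarser sets $U_m:=\{\xi\in\ZZn:\xi_1+\xi_2=m\}$, $m\ge0$, obtaining a network $\GGxx$ on the nodes $U_0,U_1,\dots$. Incrementing $\xi_i$ with $i\ge3$ leaves $\xi_1+\xi_2$ unchanged, so every such edge becomes a self-loop of $\GGxx$ and may be discarded; the surviving $\xi_1$- and $\xi_2$-edges each join consecutive $U_{m-1}$ and $U_m$, so $\GGxx$ is (after combining parallel edges) a path. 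Summing the conductances of the edges between $U_{m-1}$ and $U_m$, the factor coming from $\xi_3,\dots,\xi_n$ contributes $\bigpar{\sum_{j\ge0}e^{-\frac a2 j(j-1)}}^{n-2}<\infty$, while the pairs $(\xi_1,\xi_2)$ with $\xi_1+\xi_2=m$ are parametrised by $d\in\Z$ and contribute at most $\sum_{d\in\Z}\bigpar{e^{-\frac a2 d(d-1)}+e^{-\frac a2 d(d+1)}}<\infty$. Hence the total conductance $C_m$ between $U_{m-1}$ and $U_m$ is bounded by a finite constant $M$ independent of $m$, and therefore
\[
\Reff(\tGabg)\ge\Reff(\GGxx)=\sum_{m\ge1}C_m^{-1}\ge\sum_{m\ge1}\frac1M=\infty .
\]
This gives recurrence, and by the first paragraph $\txi(t)$ is null recurrent. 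Once the correct foliation (by $\xi_1+\xi_2$, not by $S(\xi)$) is chosen, the remaining estimates are just convergent Gaussian-type sums, which I expect to be routine.
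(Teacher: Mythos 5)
Your computations are correct, and you have correctly identified the key difficulty: the level sets $V_L=\set{S(\xi)=L}$ of \refE{Esimple} are useless here because near the diagonal the $\bfe_i$-edges with $i\ge3$ have conductance $e^{W(\xi)}\approx e^{aL/2}$. Foliating by $\xi_1+\xi_2$ instead is a good idea, and your conductance formulas and the uniform bound $C_m\le M$ check out. (A side remark: you silently read the hypothesis $\ga+\gb\gl_1\ge0$ as $\ga+\gb\gl_1=0$; that is the intended case --- compare \refT{Ttxi} --- since for $\ga+\gb\gl_1>0$ and $e(G)=1$ the chain $\txi(t)$ is transient, e.g.\ along the diagonal $\xi_1=\xi_2$.)

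However, the step $\Reff(\tGabg)\ge\Reff(\GGxx)=\sum_m C_m^{-1}$ has a genuine gap when $n\ge3$. The sets $U_m=\set{\xi_1+\xi_2=m}$ are then infinite, and the edge sets $\Pi_m$ joining $U_{m-1}$ to $U_m$ do \emph{not} separate $\bfo$ from infinity: the path $\bfo,\bfe_3,2\bfe_3,\dots$ escapes without crossing any $\Pi_m$. The Nash--Williams bound and the short-circuiting inequality of \refSS{SSelectric} require separating cut-sets (equivalently, finite short-circuited classes), and for infinite classes the inequality can simply fail: attach a transient tree to the endpoint of a unit-conductance ray and collapse that endpoint together with the whole tree into a single class; the collapsed network is a ray of infinite resistance while the original network is transient. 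So your inequality needs an extra ingredient. It does hold here, because the subnetwork induced on each $U_m$ has finite total conductance (the $\bfe_k$-conductances, $k\ge3$, are Gaussian-summable), so for any finite-energy unit flow $\theta$ from $\bfo$ the flux $\sum_{e\in\Pi_m}\theta_e$ equals $1$ for every $m$, and Cauchy--Schwarz on each $\Pi_m$ then gives energy at least $\sum_m M^{-1}=\infty$; but this conservation-of-flux step is precisely what is missing from your write-up. The paper sidesteps the issue: for $n=2$ (where your $U_m$ are finite and your argument is complete) it instead notes that all conductances \eqref{tC} are at most $1$ and invokes recurrence of SRW on $\Z_+^2$ via Rayleigh's monotonicity law, and for $n\ge3$ it uses that the extra coordinates evolve as independent positive recurrent one-dimensional chains, so the product with the null recurrent $\sfK_2$-chain is null recurrent. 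Your deduction of null recurrence from recurrence plus $\Zabg=\infty$ (\refL{LI}) is the same as the paper's and is fine.
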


\begin{proof}
  Suppose first that $n=2$ so $G=\sfK_2$ consists of a single edge.
Then \eqref{W} gives, with $\ax:=-\ga/2>0$ as above,
\begin{equation}
  W(\xi_1,\xi_2)=-\ax(\xi_1-\xi_2)^2+\ax(\xi_1+\xi_2),
\end{equation}
and then \eqref{tC} yields
\begin{equation}
  \tC_{\xi,\xi+\bfe_1}= e^{W(\xi)+\ga\xi_1} 
=e^{-\ax(\xi_1-\xi_2)^2+\ax(\xi_2-\xi_1)}\le1,
\end{equation}
and similarly, $  \tC_{\xi,\xi+\bfe_1}\le1$.
Hence all conductances are bounded by 1, and thus all resistances are
bounded below by 1. We may compare the network $\tGabg$ to the network
$\Z_+^2$ with unit resitances, and obtain by 
Rayleigh's monotonicity law $\Reff(\tGabg)\ge\Reff(\Z_+^2)=\infty$,
recalling that simple random walk on $\Z_+^2$ is recurrent by \refR{Rsimple}
and \refE{Esimple}. Hence $\txi(t)$ is recurrent.

The invariant measure $e^{W(\xi)}$ is the same as for $\xi(t)$ and
has total mass $\Zabg=\infty$ by \refL{LI}; hence $\txi(t)$ is not positive
recurrent. 

This completes the proof when $G$ is connected. 
If $G$ is disconnected, then $G$ consist of
one edge and one or several isolated vertices. By \refR{Rdisconnected}, 
$\xi(t)$ then consists of $n-1$ independent parts: one part is the CTMC
in $\Z_+^2$ defined by the graph $\sfK_2$, which is null recurrent by the
first part of the proof; the other parts are independent copies of the
CMTC in $\Z_+$ defined by a single vertex, and these are positive recurrent
since $\ga<0$. It is now easy to see that the combined $\xi(t)$ 
is null recurrent.
\end{proof}

The corresponding DTMC $\tzeta(t)$ has invariant measure
\begin{equation}\label{tmu}
  \tC_\xi:=\sum_\eta \tC_{\xi,\eta}.
\end{equation}
Note that this (in general) 
differs from the invariant measure $C_\xi$ for $\zeta(t)$,
see \eqref{hmu}.
Denote the total mass of this invariant measure by
\begin{equation}
  \tZabg:=\sum_\xi \tC_\xi=\sum_{\xi,\eta} \tC_{\xi,\eta}.
\end{equation}
There is no obvious analogue of the relation \eqref{hZ-Z}, but we can
nevertheless prove the following analogue of \refL{LID}
\begin{lemma}
  \label{LIDt}
  Let $-\infty<\ga<\infty$ and $-\infty\le\gb<\infty$.
Then $\tZabg<\infty$ if and only if
$\ga<0$ and $\ga+\gb\gl_1<0$.
\end{lemma}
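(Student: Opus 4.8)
The plan is to read $\tZabg$ off the explicit conductances \eqref{tC}, using that every undirected edge is counted twice in $\sum_{\xi,\eta}\tC_{\xi,\eta}$, so that
\[
\tZabg=\sum_{\xi,\eta}\tC_{\xi,\eta}
=2\sum_{\xi\in\ZZn}\sum_{i:\,\xi_i>0}\tC_{\xi-\bfe_i,\xi}
=2\sum_{\xi\in\ZZn}\sum_{i:\,\xi_i>0}e^{W(\xi)-\gb(A\xi)_i}
=2\sum_{\xi\in\ZZn}\sum_{i:\,\xi_i>0}e^{W(\xi-\bfe_i)+\ga(\xi_i-1)},
\]
and then to split along the same dichotomy as in \refL{LI}. \textbf{Sufficiency.} Suppose $\ga<0$ and $\ga+\gb\gl_1<0$. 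If $\gb\ge0$ then $-\gb(A\xi)_i\le0$, so every summand is at most $e^{W(\xi)}$ and $\tZabg\le 2n\Zabg<\infty$ by \refL{LI} (this is also immediate from \refR{RtC} and \refL{LID}). If $\gb<0$ I would mimic Case~4 of the proof of \refL{LI}: then $W(\xi)\le\frac{\ga}{2}\sum_k\xi_k(\xi_k-1)$ since $\sum_{i\sim j}\xi_i\xi_j\ge0$, while $-\gb(A\xi)_i\le|\gb|S(\xi)$, so $W(\xi)-\gb(A\xi)_i\le\sum_k\bigpar{\frac{\ga}{2}\xi_k(\xi_k-1)+|\gb|\xi_k}$ and $\tZabg$ is bounded by $2n\prod_{k=1}^n\sum_{m\ge0}e^{\frac{\ga}{2}m(m-1)+|\gb|m}<\infty$, the last sum converging because $\ga<0$.

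\textbf{Necessity.} Assume the condition fails. If $\ga\ge0$, restrict to the first coordinate axis: since $(A(k\bfe_1))_1=0$, the edge joining $(k-1)\bfe_1$ and $k\bfe_1$ has conductance $e^{W(k\bfe_1)}=e^{\ga k(k-1)/2}\ge1$, whence $\tZabg\ge 2\sum_{k\ge1}1=\infty$. The only remaining case is $\ga<0$ and $\ga+\gb\gl_1\ge0$, which forces $\gl_1>0$; I would choose $\bfv_1$ to be a Perron eigenvector supported on a connected component of $G$ carrying an edge, so that $\gl_1\ge1$ and hence, as in \eqref{lg1}, $v_{1i}\le\gl_1 v_{1i}=(A\bfv_1)_i=\sum_{j\sim i}v_{1j}\le\sum_{j\ne i}v_{1j}$, i.e.\ $2v_{1i}\le S(\bfv_1)$ for every $i$. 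Now take the path $\set{y_k}$ of \refL{LT2} and its step directions $i_k$ (so $y_k=y_{k-1}+\bfe_{i_k}$); by \eqref{tC} the edge from $y_{k-1}$ to $y_k$ has conductance $e^{W(y_{k-1})+\ga y_{k-1,i_k}}$, and I would bound its exponent as in the proof of \refL{LXt}. Writing $\ax=-\ga/2>0$ and expanding $y_{k-1}$ in the orthonormal eigenbasis, $\innprod{(\ga E+\gb A)y_{k-1},y_{k-1}}=(\ga+\gb\gl_1)a_{k-1,1}^2+O(1)\ge O(1)$ since $\ga+\gb\gl_1\ge0$, while $-\tfrac{\ga}{2}S(y_{k-1})+\ga y_{k-1,i_k}=\ax\bigpar{S(y_{k-1})-2y_{k-1,i_k}}=\ax t_{k-1}\bigpar{S(\bfv_1)-2v_{1,i_k}}+O(1)\ge O(1)$ by \eqref{yx} and $2v_{1,i_k}\le S(\bfv_1)$. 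Hence $W(y_{k-1})+\ga y_{k-1,i_k}\ge -C$ uniformly in $k$, so each such conductance is $\ge e^{-C}>0$ and $\tZabg\ge 2\sum_k e^{-C}=\infty$.

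The step I expect to be the crux is this last one: one must check that the extra factor $e^{-\gb(A\xi)_i}$ which distinguishes $\tC$ from $C$ does not, along the chosen path, decay faster than $e^{W}$ grows and thereby drive the conductances to $0$. It does not, and the precise reason is exactly the inequality $2v_{1i}\le S(\bfv_1)$ (equivalently \eqref{lg1}) that already underlies \refL{LG} and \refL{LXt}: when $e(G)=1$ this is an equality and the conductances along the path stay bounded away from both $0$ and $\infty$ — consistent with $\txi(t)$ being null recurrent there by \refL{LX0} — whereas when $e(G)\ge2$ they even tend to $\infty$. Everything else is routine estimation.
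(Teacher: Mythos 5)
Your proposal is correct and follows essentially the same route as the paper, whose own proof is only a pointer: it says to repeat the four-case proof of \refL{LI} with minor modifications, handling the critical case $\ga<0$, $\ga+\gb\gl_1=0$ by the argument of \eqref{lgg}--\eqref{lgz} in \refL{LXt} with $\gd=0$ allowed --- which is exactly your use of $2v_{1i}\le S(\bfv_1)$ along the path $\set{y_k}$, and you correctly identify this as the crux. The only quibble is that your sufficiency bound for $\gb<0$ via $-\gb(A\xi)_i\le|\gb|S(\xi)$ degenerates in the hard-core case $\gb=-\infty$ (which the hypothesis formally admits); this is repaired in one line by writing the conductance as $\tC_{\eta,\eta+\bfe_i}=e^{W(\eta)+\ga\eta_i}\le e^{W(\eta)}$ for $\ga<0$, giving $\tZabg\le 2n\Zabg<\infty$ directly from \refL{LI} for every $\gb\le 0$.
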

\begin{proof}
By the proof of  \refL{LI} with minor modifications. In particular, in the
case $\ga<0$ and $\ga+\gb\gl_1=0$, we argue also as in
\eqref{lgg}--\eqref{lgz}
in the proof of
\refL{LXt} (but now allowing $\gd=0$). 
We omit the details.
\end{proof}

\begin{theorem}\label{Ttzeta}
  \refT{Ttxi} holds for the DTMC $\tzeta(t)$ too.
\end{theorem}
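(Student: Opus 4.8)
The plan is to mirror the relationship between \refT{Tmain} and \refT{TD}: recurrence/transience of $\tzeta(t)$ is the same as for $\txi(t)$ (a DTMC and its parent CTMC pass through the same states, only the holding times differ), so the transient cases and the recurrent cases are already settled by \refT{Ttxi}. The only genuinely new work is to decide, in each recurrent case, whether $\tzeta(t)$ is positive recurrent or null recurrent, and this is governed entirely by whether the invariant measure $\tC_\xi$ of \eqref{tmu} has finite total mass $\tZabg$. Recall that a DTMC is positive recurrent if and only if it possesses an invariant distribution, with no possibility of an ``explosive'' obstruction as in the CTMC case; hence $\tzeta(t)$ is positive recurrent exactly when $\tZabg<\infty$.

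First I would invoke \refL{LIDt}, which states $\tZabg<\infty$ if and only if $\ga<0$ and $\ga+\gb\gl_1<0$ --- precisely case \ref{Tmain+} of \refT{Tmain}. Thus in the positive recurrent region of \refT{Ttxi}, $\tzeta(t)$ has a stationary distribution and is positive recurrent; and in every recurrent case of \refT{Ttxi} that is \emph{not} in that region, $\tZabg=\infty$, so $\tzeta(t)$ is recurrent but has no invariant distribution, hence is null recurrent. This reproduces the trichotomy of \refT{Tmain} for $\tzeta(t)$ in all cases except possibly the one exceptional case flagged in \refT{Ttxi}.

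Next I would check that the single exceptional case of \refT{Ttxi} --- namely $e(G)=1$, $\ga<0$, $\ga+\gb\gl_1=0$, where $\txi(t)$ is null recurrent --- also yields null recurrence for $\tzeta(t)$. This is immediate: recurrence of $\tzeta(t)$ follows from recurrence of $\txi(t)$ (established in \refL{LX0}), and by \refL{LIDt} (with $\ga+\gb\gl_1=0$, so we are not in the region $\ga+\gb\gl_1<0$) we have $\tZabg=\infty$, so $\tzeta(t)$ cannot be positive recurrent. Hence it is null recurrent, exactly as $\txi(t)$ is. Therefore \refT{Ttxi} holds verbatim for $\tzeta(t)$: the same classification, with the same single exception relative to $\xi(t)$ (equivalently, relative to $\zeta(t)$ via \refT{TD}).

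The main obstacle here is not in the logical skeleton, which is routine, but is entirely packaged inside \refL{LIDt}: since there is no two-sided comparison $\tZabg\asymp\Zabg$ analogous to \eqref{hZ-Z} (the factor $e^{-\gb(A\xi)_i}$ in \eqref{tC} is unbounded when $\gb>0$ and $\xi$ has growing neighbours), the finiteness of $\tZabg$ in the borderline subcase $\ga<0$, $\ga+\gb\gl_1=0$ must be argued directly, reusing the path/eigenvector estimates \eqref{lgg}--\eqref{lgz} from the proof of \refL{LXt} with $\gd$ allowed to be $0$. But that lemma is already available to us, so for the present theorem the work reduces to the bookkeeping above.
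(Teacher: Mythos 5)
Your proposal is correct and follows essentially the same route as the paper: recurrence versus transience transfers between $\txi(t)$ and its embedded chain $\tzeta(t)$, and positive versus null recurrence is decided by the finiteness of $\tZabg$ via \refL{LIDt}. (The paper's own proof is a two-line version of this, which even cites itself in place of \refL{LIDt} by an apparent typo; your write-up fills in the same bookkeeping explicitly.)
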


\begin{proof}
  By \refT{Ttxi} for recurrence vs transience, and by \refT{Ttzeta} for
  positive recurrence vs null recurrence.
\end{proof}
We are not going to analyse the modified model any further.

\section{Alternative proofs using Lyapunov functions}
\label{Lyap}

In this  section we give alternative proofs of 
some parts of Theorem~\ref{Tmain}.
These proofs do not use reversibility, and
have therefore potential extensions also to cases where electric networks
are not applicable.
They are based on the following 
recurrence criterion for countable Markov chains using Lyapunov functions,
see e.g.\ \cite[Theorem 2.2.1]{FMM}.
%) and the renewal theory for random walks, 

\begin{criterion}\label{rc}
 A CTMC with values in  $\Z_{+}^n$ is recurrent if and only if 
 there exists a positive function $f$ (the Lyapunov function) 
on $\Z_{+}^n$ such that 
$f(\xi)\to \infty$ as $\xi\to \infty$ and\/ 
${\sf L}f(\xi)\leq 0$ for all $\xi\notin D$, where ${\sf L}$ is the Markov chain generator, and 
$D$ is a finite set. 
\end{criterion}

Note that the Lyapunov function $f(\xi)$ is far from unique. The idea of the
method is to find some explicit function $f$ for which the conditions can be
verified. 
There is also a related criterion for transience \cite[Theorem 2.2.2]{FMM},
but we will not use it here.

We give only some examples. 
(See also \cite{Volk3} for further examples.)
It might be possible to give a complete proof of
\refT{Tmain} using these methods, but this seems rather challenging.
Note that (since our Markov chains have bounded steps), the Lyapunov
function $f$ can be changed arbitrarily on a finite set; hence it suffices
to define $f(\xi)$ (and verify its properties) for $\norm\xi$ large.
We do so, usually without comment, in the examples below.

\begin{example}
\exx{Proof of the hard-core case of \refT{Tmain}\ref{Tmain0a} 
by the \refrc{rc}.}
Assume that  $\alpha=0$, $\beta=-\infty$ and    $k_{\max}(G)$ $\le 2$. 
As said in \refSS{SShardcore}, we may assume that the Markov chain lives on
$\GO$ defined in \eqref{GO}; since $\kk\le2$, this implies that
  no more than two  components of the process can be non-zero.
Therefore, the  Markov chain evolves as a simple random walk on a certain finite union of quadrants of $\Z_+^2$ and half-lines $\Z_+$ glued along the axes. 
Each of these random  walks is null-recurrent, and, hence, the whole process
should be null-recurrent as well.
 We provide a rigorous justification to this heuristic argument by 
using the \refrc{rc}.

The generator ${\sf L}$ of the Markov chain in the  case $\alpha=0$, $\beta=-\infty$
is 
$${\sf L}f(\xi)=\sum_{i=1}^n \left(f\left(\xi+e_i\right)-f(\xi)\right)\indic_{\{\xi: \xi_j=0, \, j\sim i\}}
+\left(f\left(\xi-e_{i}\right)-f(\xi)\right)\indic_{\{\xi_i>0\}}.
$$
We define a Lyapunov function on $\Z_{+}^n$ by
\begin{equation}
\label{f}
f(\xi):=
%\begin{cases}
\log\left(\norm{\xi-{\bf e}}^2 -n+\frac 32\right),  
\qquad\norm{\xi}\geq C_1,
%1,& \norm{\xi}<C_1,
%\end{cases}
\end{equation}
where  ${\bf e}=(1,\dots,1)\in\Z_{+}^n$ is the vector whose all coordinates
are equal to $1$, and
 $C_1>0$ is sufficiently large  so that  the expression inside the log is
 greater than 1. 
Note that 
the function is defined for any state in $\ZZn$, but we consider it only on
the subset $\GO$. 
%and let
%$$\Delta_{x,y}=\E\left(f\left(\xi^{(n+1)}\right)-f\left(\xi^{(n)}\right) | \xi^{(n)}=(x,y,\dots,0)\right),$$
%where $\xi^{(n)}$ denote the $n$-th step of the embedded Markov chain,
%and establish that $\Delta_{x,y}\le 0$ provided $\Vert(x,y)\Vert$ is large enough (i.e.\ $f(\xi(t))$ is a non-negative supermartingale away from the origin), satisfying %$f(\xi)\to\infty$ when $\Vert \xi\Vert\to\infty$ (here $\Vert\cdot\Vert$ is a standard Euclidean norm). Then one can apply Theorem~2.2.1 from~\cite{FMM}. 
Let $\xi\in\GO$ with $\norm{\xi}>C_1+1$.
First, assume that $\xi$ has two non-zero components, say $x>0$ 
and $y>0$, so that 
$$f(\xi)=\log\left((x-1)^2+(y-1)^2-\frac12\right),$$
and both $x$ and $y$ can increases as well as decrease. A direct  computation gives that 
\begin{align*}
{\sf L}f(\xi)&=\log\left(x^2+(y-1)^2-\frac12\right)
+\log\left((x-1)^2+y^2-\frac12\right)\\
&\qquad
+\log\left((x-2)^2+(y-1)^2-\frac12\right)
+\log\left((x-1)^2+(y-2)^2-\frac12\right)\\
&\qquad
-4\log\left((x-1)^2+(y-1)^2-\frac12\right)
\\
&=\log\left(1-\frac{64(x-y)^2(x+y-2)^2}{(2x^2+2y^2-4x-4y+3)^4}
\right) \le 0.
\end{align*}
Next, assume that $\xi$ has only one non-zero component, say  $x=a+1>0$. Then
$f(\xi)=\log\left(a^2+1/2\right),$
and  this component  can both increase and decrease.  Note that {\it some} of the other components may also increase by~$1$, 
and assume there are $m\ge 0$ such components. 
A direct computation gives that 
\begin{align*}
{\sf L}f(\xi)&=\left[\log\left((a+1)^2+\frac12\right)
+\log\left((a-1)^2+\frac12\right)
-2\log\left(a^2+\frac12\right)\right]\\
&+m\left[\log\left(a^2-\frac12\right)-\log\left(a^2+\frac12\right)\right]\\
&=\log\left(\frac{4a^4-4a^2+9}{4a^4+4a^2+1}\right)+m\left[\log\left(\frac{2a^2-1}{2a^2+1}\right)\right]\leq 0.
\end{align*}
Hence, $\gen f(\xi)\le0$ whenever $\xi\in\GO$ with $\norm{\xi}>C_1+1$.
It follows now from the recurrence criterion \ref{rc}
that CTMC $\xi(t)$  is recurrent. 
\end{example}

Now consider the case  $\alpha=0$ and  $-\infty<\beta<0$.
The generator of  the Markov chain with parameter $\alpha=0$ is 
\begin{equation}
\label{generator}
\gen f(\xi)=\sum_{i=1}^n \left(f\left(\xi+e_i\right)-f(\xi)\right)e^{\beta(A\xi)_i}
+\left(f\left(\xi-e_{i}\right)-f(\xi)\right)\indic_{\{\xi_i>0\}}.
\end{equation}

We consider for simplicity only some small graphs $G$,
using modifications of the
Lyapunov function (\ref{f}) used in the hard-core case.

Recurrence in  the case  
$\alpha=0$, $b:=-\beta>0$ and
$G=\sfK_2$, the graph with just  $2$  vertices and
a single edge, 
was shown in~\cite{Volk3} by applying the \refrc{rc} with 
the Lyapunov function $f(\xi)=\log(\xi_1+\xi_2+1)$.
Alternatively, one could use e.g.\
$f(\xi)=\log(\xi_1+\xi_2)$
%$\log(\xi_1^2+\xi_2^2-1)$ 
or
$\log(\xi_1^2+\xi_2^2)$.
We extend this to the case $G=\sfK_n$, the complete
graph with $n$ vertices, for any $n\ge2$.

 \begin{example}\label{EKn}
\exx{Recurrence in the case $\alpha=0$, $\beta=-b<0$ and $G=\sfK_n$.}
We use the function $f(\xi):=\log\Vert \xi\Vert$.
(Similar arguments work for variations such as $\log\bigpar{\norm\xi^2\pm1}$
and $\log(\xi_1+\dots+\xi_n)$.)

Regard $f(\xi)$ as a function on $\R^n\setminus\set0$ and write $r=\norm{\xi}$.
The partial derivatives of $f$ are
\begin{equation}
  \frac{\partial f(\xi)}{\partial \xi_i}=\frac{\xi_i}{r^2}
\end{equation}
and all second derivatives are $O\bigpar{r\qww}$.
Hence, a Taylor expansion of each of the differences in \eqref{generator}
  yields the formula, for $\xi\in\ZZn$,
\begin{equation}
\label{qa}
\gen f(\xi)
=\sum_{i=1}^n\frac{\xi_i}{r^2}\bigpar{e^{-b(A\xi)_i}-\indic_{\{\xi_i>0\}}}
+O\bigpar{r\qww}
=\sum_{i=1}^n\frac{\xi_i}{r^2}\bigpar{e^{-b(A\xi)_i}-1}
+O\bigpar{r\qww}.
\end{equation}

Suppose first that at least 2 components $\xi_i$ are positive. Then
$A\xi_i=\sum_{j\neq i}\xi_j\ge1$ for every $i$, and thus \eqref{qa} implies,
since $r\le\sum_i\xi_i$,
\begin{equation}
\label{qb}
\gen f(\xi)
\le -\bigpar{1-e^{-b}}\sum_{i=1}^n\frac{\xi_i}{r^2}+O\bigpar{r\qww}
\le -\frac{1-e^{-b}}r+O\bigpar{r\qww},
\end{equation}
which is negative for large $r$, as required.

It remains to consider the case when a single component $\xi_i$ is positive,
say $\xi=(x,0,\dots,0)$ with $x>0$.
Then the estimate \eqref{qa} is not good enough.
Instead we find from \eqref{generator}
\begin{align}
\gen f(\xi)
&=
\log(x+1)+\log(x-1)-2\log x+(n-1)e^{-bx}\bigpar{\log\sqrt{x^2+1}-\log x}
\nonumber\\&
=
\log\frac{x^2-1}{x^2} +\frac{n-1}2e^{-bx}\log\frac{x^2+1}{x^2}
\le -\frac{1}{x^2}+\frac{n-1}2e^{-bx},
\label{q1}
\end{align}
which is negative when $x$ is large.

Hence, in both cases, $\gen f(\xi)\le0$ when $\norm\xi$ is large, and
recurrence follows by the \refrc{rc}.
\end{example}

The argument in \refE{EKn} used the fact that $G$ is a complete graph
  so that $(A\xi)_i\ge1$ unless only $\xi_i$ is non-zero.
Similar arguments work for some other graphs.

\begin{example}
Let again $\ga=0$, $\gb<0$ and let
$G=\sfK_{1,2}$, a star with 2 non-central vertices, which is the same as
a path of 3 vertices. Number the vertices with the central vertex as 3, and
writre $\xi=(x,y,z)$.
Taylor expansions similar to the one in \eqref{qa}, but going
further, show that $f(\xi):=\log\norm\xi$ is not a Lyapunov function.
(The problematic case is $\xi=(x,x,0)$, with 
$\gen f(\xi)=\frac12r^{-4}+O(r^{-6})$.)
However, similar calculation also show that
$f(\xi):=\log(\norm\xi^2-1)$ is  a Lyapunov function, showing recurrence by 
the \refrc{rc}. We omit the details.
%Another Lyaponov function in this case is $\log(x^2+y^2-1)+\log (z+1)$.
\end{example}

\appendix
\section{An alternative argument in the hard
core case}

We present here yet another argument, which seems to be able to give an
alternative proof of \refT{Tmain}\ref{Tmaintkk}. 
However, the argument is not completely rigorous,
so it should in its present form be regarded only as a heuristic argument.
It is included here in order to suggest future developments.

For simplicity, we consider  a particular case, namely, when the graph
$G$ is a cycle $\sfC_6$ with $6$ vertices. 
 In this case the state space of the Markov chain is 
 $\GO=\bigcup_{1}^3\Gamma^2_j \cup \bigcup_{1}^2\Gamma^3_j$, 
where (with addition of indices modulo 6)
\begin{align*}
\Gamma_j^2&=\bigset{(x_1,\dots,x_6): x_i=0 \text{ unless } i\in\{j,j+3\}},
\\
\Gamma_j^3&=\bigset{(x_1,\dots,x_6): x_i=0 \text{ unless } i\in\{j,j+2,j+4}\}.  
\end{align*}
The Markov chain is simple random walk on this state space.

If the random walk is  in $\Gamma^3_j$, then it will sooner or later reach a
point where two of the three allowed coordinates are $0$, and thus only one
non-zero, say $x_j$. The random walk then may either go back into
$\Gamma^3_j$, or it may move into $\Gamma^2_j$. In the latter case, it might
either return again to the intersection line $L_j=\Gamma^2_j\cap\Gamma^3_j$,
or it might cross $\Gamma^2_j$ and reach $L_{j+3}$, in which case it may go
on to other parts of $\Gamma$. 
We want to show that  with positive probability, the latter case will not
happen.  
Consequently, the random walk will a.s.\ eventually be confined to 
$\widehat\gG^3_j:=\Gamma^3_j\cup\Gamma^2_j\cup\Gamma^2_{j+2}\cup\Gamma^2_{j+4}$
for $j=1$ or $2$;  
in particular, the random walk is transient.

If the random walk is in $\widehat\gG^3_j$, it may escape through $\gG^2_j$,
$\gG^2_{j+2}$ or $\gG^2_{j+4}$.
Allowing three routes of escape does not seem to be significantly different
from just one, so we consider for simplicity instead random walk on
$\Gamma'_j:=\Gamma^3_j\cup\Gamma^2_j$. (This is one of the non-rigorous
steps.)
The rest of the argument is thus devoted to showing
the following, which implies that there is a positive
probability of not escaping.
\begin{claim}
\label{CL2}
A random walk on $\Gamma_j'$ a.s.\ hits the line $L_{j+3}\subset\Gamma^2_j$ only a finite number of times. 
\end{claim}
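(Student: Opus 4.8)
The plan is to analyse the random walk on $\Gamma'_j=\Gamma^3_j\cup\Gamma^2_j$ by projecting onto the single coordinate $x_j$, which is the only coordinate shared by the two pieces, and which controls the ``neck'' $L_{j+3}=\set{\xi\in\Gamma^2_j:\xi_j=0}$. Write the three free coordinates of $\Gamma^3_j$ as $(x_j,x_{j+2},x_{j+4})$ and the two free coordinates of $\Gamma^2_j$ as $(x_j,x_{j+3})$, glued along $L_j=\set{x_{j+2}=x_{j+4}=0}=\set{x_{j+3}=0}$. Observe that hitting $L_{j+3}$ requires first $x_j$ to reach $0$ \emph{while the walk is in the $\Gamma^2_j$ part} (so that $x_{j+3}>0$ is possible). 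The rough idea is that on $\Gamma^3_j\cong\Z_+^3$ the walk is transient, so with probability one it visits the line $L_j$ (where $x_{j+2}=x_{j+4}=0$) only finitely often; after the last such visit the walk never re-enters $\Gamma^2_j$ at all, hence hits $L_{j+3}$ only finitely often. So the real content is a quantitative version: from a point of $L_j$ with $x_j=m$ large, the probability of ever returning to $\bigcup_k\set{x_j=0}\cap\Gamma^2_j$ (i.e. of the dangerous excursion into the $2$-dimensional wing followed by a descent of $x_j$ to $0$) is summably small in $m$, so Borel--Cantelli finishes the job.

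Concretely, I would first show that the embedded discrete walk, observed only at the times it lies on $L_j$, is itself a transient (or at least a.s.\ finitely-visited) process: conditioned on being at $(m,0,0)\in L_j$, with probability bounded below the walk escapes to infinity inside $\Gamma^3_j\cong\Z_+^3$ (by transience of SRW on the octant, \refR{Rsimple} and \refE{Esimple}), and each time it does return to $L_j$ the value of $x_j$ has a fair chance of having grown. This is exactly the kind of estimate handled by a Lyapunov / potential-theoretic comparison: the function $h(\xi)$ equal to the harmonic measure / escape probability on $\Z_+^3$ decays like $\norm\xi^{-1}$, so $\sum_m$ of the return-and-descend probabilities converges. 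Then one uses the strong Markov property at successive visits to $L_j$: either there are only finitely many such visits (done), or $x_j$ along these visits is recurrent on $\Z_+$; in the latter (non-)case one shows the walk would have to be recurrent on $\Z_+^3$, a contradiction. Either way, after the last visit to $L_j$ the walk stays in $\Gamma^3_j\setminus L_j$ forever, hence never touches $\Gamma^2_j$ and in particular hits $L_{j+3}$ only finitely often.

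An alternative, possibly cleaner, route is purely via electric networks, consistent with the rest of the paper: glue a copy of $\Z_+^3$ and a copy of $\Z_+^2$ along a half-line, put unit conductances everywhere, and estimate the effective resistance from the far reaches of the $\Z_+^3$ block to the line $L_{j+3}$. Since $\Z_+^2$ has logarithmically divergent resistance to infinity while $\Z_+^3$ has finite resistance to infinity, a current flow that escapes to infinity in the $\Z_+^3$ direction puts only a vanishing fraction of its flow through the neck; summing the resistances of the nested ``shells'' $\set{x_j=m}$ in the $\Z_+^2$ wing shows that the resistance between $L_{j+3}$ and infinity-in-$\Gamma^3_j$ is finite, so with positive probability the walk started near $L_{j+3}$ never returns, and by a standard last-exit decomposition $L_{j+3}$ is hit finitely often a.s. I would present this version, since \refE{Esimple} gives all the shell-resistance estimates off the shelf.

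The main obstacle is making the gluing rigorous: $\Gamma'_j$ is not a product, and near the junction line $L_j$ the walk can move in either of the two blocks, so the clean ``transient on $\Z_+^3$'' statement has to be transferred to a space that is $\Z_+^3$ with an extra two-dimensional flap attached along a ray. I expect the honest way is to note that deleting the flap only \emph{removes} edges, so by Rayleigh monotonicity the resistance to infinity on $\Gamma'_j$ is at most that on $\Z_+^3$, hence finite --- giving transience of $\Gamma'_j$ immediately --- and then separately argue that the portion of escaping current through $L_{j+3}$ is finite in total, which is where the logarithmic growth of resistances in the $\Z_+^2$ flap (\refE{Esimple}) is used. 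Turning ``finite resistance through the neck'' into ``$L_{j+3}$ hit finitely often a.s.'' is then the standard fact that a transient walk visits any finite set finitely often, applied after checking $L_{j+3}$ is, from the walk's eventual point of view, essentially a finite target relative to the direction of escape.
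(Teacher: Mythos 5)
There is a genuine gap, and it sits at the very first (and load-bearing) step of your primary route. You claim that because SRW on $\Z_+^3$ is transient, the walk a.s.\ visits the line $L_j$ only finitely often, so that after the last such visit it never re-enters $\Gamma^2_j$. Transience controls visits to \emph{finite} sets, not to an infinite line. In fact the opposite holds here: contrary to your remark that ``$\Gamma'_j$ is not a product'', it is one --- $\Gamma'_j\cong U\times(V\cup W)$ with $U\cong\N$ the shared coordinate $x_j$, $V\cong\N$ the coordinate $x_{j+3}$, and $W\cong\N^2$ the coordinates $(x_{j+2},x_{j+4})$, where $V$ and $W$ are glued at $0$. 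In continuous time the $U$-component and the $(V\cup W)$-component are independent, and the $(V\cup W)$-walk is \emph{recurrent} (a wedge of a recurrent one-dimensional and a recurrent two-dimensional walk), so the walk returns to $L_j=U\times\set{0}$ infinitely often a.s.\ --- exactly as transient SRW on $\Z^3$ hits a fixed coordinate axis infinitely often. Once this step fails, the conclusion ``after the last visit to $L_j$ the walk stays in $\Gamma^3_j$'' has no last visit to hang on. Your electric-network alternative founders on the same point: $L_{j+3}$ is an infinite set, and positive escape probability from each of its points does not bound the number of visits; what is needed is summability of the Green's function over $L_{j+3}$, i.e.\ a finite \emph{expected} number of visits, and neither version of your argument produces such an estimate.

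That quantitative estimate is precisely the content of the paper's (admittedly partly heuristic) argument, and it is delicate: using the product structure, the paper computes the renewal measure of returns of the $(V\cup W)$-walk to the wedge point (Laplace transform $\approx|\log s|$, dominated by the excursions into the two-dimensional wing $W$), bounds by a gambler's-ruin supermartingale argument (\refL{LP1}) the probability $\asymp 1/(x+1)$ that an excursion into $U\times V$ started with $X_U=x$ crosses the diagonal (a necessary precursor to reaching $L_{j+3}$), combines this with the CLT bound $\E\bigl[1/(X_U(t)+1)\bigr]=O(t^{-0.49})$, and finally checks that $\int_0^\infty q(t)\rmd\mu_{V\cup W}(t)\approx\int_0^1 s^{-0.51}|\log s|\rmd s<\infty$, whence Borel--Cantelli. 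The convergence is marginal (a power barely beating a logarithm), so no soft transience or Rayleigh-monotonicity argument can substitute for it; any correct proof must contain some version of this computation, which your proposal does not.
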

We drop the index $j$. Note that $\Gamma'$ is a product $\Gamma'=U\times (V\cup W)$, where $U\cong\N$, $V\cong\N$ and $W\cong\N^2$, and  $V$ and $W$ intersect in the single point  $0$ (the 0 in both $V$ and $W$).

We consider continuous time, with jumps with rate $1$ along any edge. Then the random walk consists of two independent components, a continuous-time random walk in $U$ and  a continuous-time random walk in $V\cup W$.

Consider the latter (i.e.\ a continuous-time random walk in $V\cup W$).  It
returns infinitely often to 0, making excursions into either $V$ or $W$; the
excursions are independent. (Recall that random walk in $V$ or in $W$ is
recurrent, thus every excursion is finite and eventually returns to 0.)

Consider first excursions into $V\cong\N$. Let $T_V$ be the time until the first return, $f_V(s):=\E e^{-s T_V}$ its Laplace transform, and 
\begin{equation}
    g_V(s):=\int_0^\infty e^{-sx}\rmd\mu_V(x)=\frac{f_V(s)}{1-f_V(s)},
\end{equation}
the Laplace transform of the corresponding renewal measure $\mu_V$. By
symmetry, we can consider a random walk on $\Z$ instead of $\Z_{+}$, and
then the intensity $\rmd\mu_V(x)/\rmd x$ of a return at $x$   is $\approx
x^{-1/2}$,
where $\approx$ means 'of the same order as'.
Hence, as $s\to 0$,
\begin{equation}
 g_V(s)\approx \int_0^\infty e^{-sx}x^{-1/2}\rmd x \approx s^{-1/2},  
\end{equation}
and thus $f_V(s)\approx 1-s^{1/2}.$

For excursions into $W\cong\N^2$, we similarly have $\rmd\mu_W(x)/\rmd x\approx x^{-1}$  
and hence, for small $s$,
$g_W(s)\approx \int_1^\infty e^{-sx}x^{-1}\rmd x \approx |\log s|$ and thus 
\begin{equation}
    f_W(s)=\frac{g_W(s)}{1+g_W(s)}\approx
  \frac{|\log(s)|}{1+|\log(s)|}
  \approx 1-1/|\log s|.
\end{equation}
For the combined excursions, we have 
$
f_{V\cup W}(s) = \frac{1}3f_V(s) +\frac{2}3f_W(s)  
$
and thus
$$
1-f_{V\cup W}(s) = \frac{1}3\left(1-f_V(s)\right)
 +\frac{2}3\left(1-f_W(s)\right)
\approx \frac{2}3\left(1-f_W(s)\right)\approx 1/|\log s|,
$$
since $1-f_W(s)\gg 1-f_V(s)$. Hence, the Laplace transform of the renewal
measure $\mu_{V\cup W}$ describing the intensity of returns to 0 in $V\cup W$  is 
\begin{equation}\label{gVW}
  g_{V\cup W}(s)=\frac{f_{V\cup W}(s)}{1-f_{V\cup W}(s)}
\approx |\log s|.
\end{equation}

Now consider also $U$. We are interested in excursions into $\Gamma^2=U\times V$ that cross to the other boundary. This happens if the random walk in $U$ hits $0$ during an excursion of the $V\cup W$ walk into $V$. 
%(Or during one of the short stays at $0=V\cap W$; I ignore those.)

Consider an excursion into $\Gamma^2=U\times V\cong\N^2$ starting at time
$t$; it 
begins with a step from $(X_U(t),0)$ to $(X_U(t),1)$, where $X_U$ is the
random walk on $U$. By \refL{LP1} below,
the probability that a random walk in $\Z_{+}^2$ starting at $(x,1)$ hits
the diagonal before it hits $\N\times\{0\}$ is $\approx 1/(x+1)$. Hence, the
probability $q(t)$ that an excursion starting at time $t$ hits the diagonal
in
$\Gamma^2$ is $q(t)  \approx \E \frac{1}{X_t+1}$.

Now, $X_t$ is (almost) the same as the modulus of a simple random walk on
$\Z$, so at time $t$, by the Central Limit Theorem, 
the probability function $\P(X_t=k)=O(t^{-1/2})$, uniformly in $k$. Hence,
for large $t$,
\begin{equation}
  q(t) \approx  \E \frac{1}{X_t+1}
=\sum_{k=0}^{\sqrt t} \frac{\P(X_t=k)}{k+1} + O(t^{-1/2})
\approx \sum_{k=0}^{\sqrt t} \frac{t^{-1/2}}{k+1} + O(t^{-1/2})
\approx t^{-1/2} \log t = O(t^{-0.49}).	
\end{equation}
The expected number of times an excursion into $U\times V$ hits the
diagonal before it hits $\N\times\{0\}$ is thus
\begin{align*}
\int_0^{\infty} q(t)\rmd\mu_{V\cup W}(t)
&\le C \int_0^{\infty} t^{-0.49}\rmd\mu_{V\cup W}(t)
\approx  C \int_0^{\infty} \int_0^1 s^{-0.51}e^{-st}\rmd s \rmd\mu_{V\cup W}(t) 
 \\& = C \int_0^1  s^{-0.51}g_{V\cup W}(s) \rmd s 
 \approx  \int_0^1  s^{-0.51}|\log s| \rmd s <\infty,
\end{align*}
where we used the fact that
\begin{align*}
\int_0^1 s^{-0.51} e^{-st} \rmd s=
\int_0^{\infty} s^{-0.51} e^{-st} \rmd s
-\int_1^{\infty} s^{-0.51} e^{-st} \rmd s
=\Gamma(0.49)\cdot t^{-0.49}+o(t^{-1})
\end{align*}
and hence, for $t\ge1$, say, 
$t^{-0.49}\approx \int_0^1 s^{-0.51} e^{-st} \rmd s$.
Consequently, a.s.\ only a finite number of excursions into $\gG^2$ will hit
the diagonal. Any excursion hitting the line $L_{j+3}=\set{0}\times V$ has to hit
the diagonal first, and thus there is a.s.\ only a finite number of such
excursions, each hitting the line a finite number of times.

This completes our (partly heuristic) argument for Claim \ref{CL2}, and thus for
transience of the Markov chain.

\begin{lemma}
\label{LP1}
Let $(X_n, Y_n)$ be a discrete time symmetric simple random walk on
$\Z_+^2$. Then, for every $x>0$,
\begin{equation}
\frac{1}{x}\leq \P\bigpar{(X_n, Y_n)  
\mbox{ hits the diagonal before it hits } y=0 \mid
(X_0, Y_0)=(x, 1)}
\leq \frac{2}{1+x}.
\end{equation}
\end{lemma}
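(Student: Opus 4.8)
The plan is to sandwich the probability $p=p(x)$ in the statement between the values at $(x,1)$ of two explicit functions on $\Z_+^2$, via optional stopping. Write $(X_n,Y_n)$ for the walk started at $(x,1)$, put $L:=\set{(a,b):a=b}$ (the diagonal) and $B:=\set{(a,b):b=0}$ (the line $y=0$), and let $\tau:=\inf\set{n\ge0:(X_n,Y_n)\in L\cup B}$, so that $p=\P\bigpar{(X_\tau,Y_\tau)\in L}$. Two simple facts are used throughout. First, up to time $\tau$ the walk stays in the open wedge $\set{(a,b):a>b\ge1}$: it can never reach $\set{a<b}$ because steps have size $1$ and $L$ is absorbing, and it exits the wedge only into $L$ (from a state with $a=b+1$) or into $B$ (from a state with $b=1$); in particular every state visited strictly before $\tau$ has $a\ge2$ and $b\ge1$ and is thus an interior vertex of $\Z_+^2$, at which the walk moves to each of its four neighbours with probability $\tfrac14$. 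Second, $\tau<\infty$ almost surely: until $\tau$ the coordinate $Y_n$ performs a lazy simple random walk on $\Z_+$ absorbed at $0$, which from $Y_0=1$ reaches $0$ in finite time a.s.\ by one-dimensional recurrence, and $\tau$ is bounded by that hitting time.

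For the lower bound I use $g(a,b):=b/a$. On a non-absorbed state we have $a\ge2$, and since $\tfrac1{a+1}+\tfrac1{a-1}=\tfrac{2a}{a^2-1}\ge\tfrac2a$, a one-step computation gives
\begin{equation*}
\E\bigpar{g(X_{n+1},Y_{n+1})\mid\F_n}
=\frac{b}{4}\Bigpar{\frac1{a+1}+\frac1{a-1}}+\frac{b}{2a}
\;\ge\;\frac{b}{a}=g(a,b).
\end{equation*}
Hence $g(X_{n\wedge\tau},Y_{n\wedge\tau})$ is a submartingale, and it is bounded, taking values in $[0,1]$ since $b\le a$ up to time $\tau$. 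As $\tau<\infty$ a.s., optional stopping yields $\E\,g(X_\tau,Y_\tau)\ge g(x,1)=1/x$; and $g\equiv0$ on $B$ while $g\equiv1$ on $L$, so $\E\,g(X_\tau,Y_\tau)=p$. Thus $p\ge1/x$.

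For the upper bound I use $h(a,b):=\dfrac{2b}{a+b}$, which equals $1$ on $L$, equals $0$ on $B$, and equals $\tfrac2{x+1}$ at $(x,1)$. Writing $s:=a+b\ge2$ on the wedge, the one-step expectation collapses to
\begin{equation*}
\E\bigpar{h(X_{n+1},Y_{n+1})\mid\F_n}
=\frac14\Bigpar{\frac{4b+2}{s+1}+\frac{4b-2}{s-1}}
=\frac{2bs-1}{s^2-1}\;\le\;\frac{2b}{s}=h(a,b),
\end{equation*}
where the final inequality is equivalent to $2b\le s=a+b$, i.e.\ to $b\le a$, which holds on the wedge. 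So $h(X_{n\wedge\tau},Y_{n\wedge\tau})$ is a supermartingale with values in $[0,1]$, and optional stopping gives $p=\E\,h(X_\tau,Y_\tau)\le h(x,1)=\tfrac2{x+1}$.

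The two displayed identities are routine algebra; the points that genuinely need care are the bookkeeping behind the first fact above — that before $\tau$ the walk never touches the line $a=0$ nor enters $\set{a<b}$, which is exactly what makes the interior four-neighbour formula legitimate and keeps $g$ and $h$ bounded — and the (elementary) a.s.\ finiteness of $\tau$ needed for the optional stopping step. The choice of $g$ and $h$ is more or less forced: they are the simplest rational functions with the required boundary values ($0$ on $B$, $1$ on $L$), and they come out sub- respectively super-harmonic for the walk precisely because $t\mapsto1/t$ is convex.
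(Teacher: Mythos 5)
Your proof is correct and is essentially the paper's own argument: the paper's proof uses $W_n=Y_n/X_n$ as a bounded submartingale for the lower bound and $Z_n=Y_n/(X_n+Y_n)$ as a bounded supermartingale for the upper bound (your $h$ is just $2Z_n$, normalised to equal $1$ on the diagonal rather than $\tfrac12$), both combined with the optional stopping theorem exactly as you do. The only difference is that you spell out the bookkeeping the paper leaves implicit, namely that the walk stays at interior vertices of $\Z_+^2$ before $\tau$ and that $\tau<\infty$ a.s.
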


\begin{proof}%[Proof of Proposition \ref{P1}.]
Define $\tau:=\min\set{n: Y_n=0 \text{ or } X_n=Y_n}$, 
$Z_n:=\frac{Y_n}{X_n+Y_n}$ and $\tilde Z_n:=Z_{n\wedge \tau}$. A direct
computation gives that $\E(Z_{n+1}- Z_n\mid(X_n, Y_n))<0$  if
$0<Y_n<X_n$. 
Hence, the stopped process $\tilde Z_n$ is a bounded supermartingale.
Furthermore, $\tau<\infty$ a.s., and it follows from
%and, hence, converges almost surely to a finite limit $\tilde Z$. 
the Optional Stopping Theorem that
$\E\bigpar{Z_\tau\mid(X_0, Y_0)=(x, 1)}\leq \tilde Z_0=\frac{1}{x+1}$.
On the other hand, 
$Z_\tau$ takes only the values $0$ and $1/2$, with the latter value if the
diagonal is hit first. Thus
$\E(Z_\tau)=\frac{1}{2}\P\left(Z_\tau=\frac{1}{2}\right)$. 
Therefore, given $(X_0, Y_0)=(x, 1)$,  the probability that $(X_n, Y_n)$ hits diagonal before the line $y=0$ is no larger than $\frac{2}{1+x}$.

Consider now  the process $\tilde W_n:=W_{\tau\wedge n}$, where  $W_n:=\frac{Y_n}{X_n}$. A direct computation  gives that 
$\E\left(\tilde W_{n+1}-\tilde W_n\mid(X_n, Y_n)\right)\ge0$ is
non-negative. Thus $\tilde W_n$ is a bounded submartingale, and
%,  and, hence, converges almost surely to a finite limit $\tilde W$. 
by the optional stopping theorem $\E(W_\tau)\geq \tilde W_0=\frac1x$.
Furthermore, $\E(W_\tau)=\P(W_{\tau}=1)$, the probability of hitting the
diagonal.
Therefore, given $(X_0, Y_0)=(x, 1)$, the probability that $(X_n, Y_n)$ hits 
the diagonal before it hits the line $y=0$ is at least $\frac1x$. 
\end{proof}

\subsection*{Acknowledgement}
S.J.~research is partially supported by 
the Knut and Alice Wallenberg Foundation.
\\
S.V.~research is partially supported by Swedish Research Council grant
VR2014--5157.
\\
 We thank James Norris for helpful comments.

\end{document}